\newif\ifdraft \draftfalse 
\definecolor{darkblue}{rgb}{0,0,.5}
\newcommand{\R}{\mathbb{R}}
\newtheorem{theorem}{Theorem}
\newtheorem{lemma}{Lemma}[section]
\newtheorem{definition}{Definition}[section]
\newtheorem{remark}{Remark}
\begin{document}

\title{Privacy Aware Experimentation over Sensitive Groups: A General Chi Square Approach}
\author{Rina Friedberg and Ryan Rogers\thanks{The authors would like to thank Parvez Ahammad, YinYin Yu, and Rahul Tandra for their support, and Adrian Cardoso and Kenneth Tay for thoughtful feedback and reviews. We also thank participants in the Fields Institute Workshop on Differential Privacy and Statistical Data Analysis for their feedback and discussions.}}
\affil{Data Science Applied Research, LinkedIn}

\maketitle 

\begin{abstract}
We study a new privacy model where users belong to certain sensitive groups and we would like to conduct statistical inference on whether there is significant differences in outcomes between the various groups.  In particular we do not consider the outcome of users to be sensitive, rather only the membership to certain groups.  This is in contrast to previous work that has considered locally private statistical tests, where outcomes and groups are jointly privatized, as well as private A/B testing where the groups are considered public (control and treatment groups) while the outcomes are privatized.  We cover several different settings of hypothesis tests after group membership has been privatized amongst the samples, including binary and real valued outcomes.  We adopt the generalized $\chi^2$ testing framework used in other works on hypothesis testing in different privacy models, which allows us to cover $Z$-tests, $\chi^2$ tests for independence, t-tests, and ANOVA tests with a single unified approach. When considering two groups, we derive confidence intervals for the true difference in means and show traditional approaches for computing confidence intervals miss the true difference when privacy is introduced.  For more than two groups, we consider several mechanisms for privatizing the group membership, showing that we can improve statistical power over the traditional tests that ignore the noise due to privacy. We also consider the application to private A/B testing to determine whether there is a significant change in the difference in means across sensitive groups between the control and treatment.  
\end{abstract}

\clearpage

\tableofcontents
\clearpage

\section{Introduction}

\subsection{Background}

When measuring the impact that new products or enhancements have on users, we rely on A/B testing to help us determine if the new feature significantly improves the user experience.  To determine whether certain user groups are negatively impacted, although overall metrics might improve, we would like to measure the outcomes across these groups. 

However, to even run these statistical tests or form estimates, we need to know attributes of each data sample, such as what group they belong to.  When dealing with personal data, privacy techniques should be considered, especially when we are dealing with sensitive groups, e.g. race/ethnicity or gender.  There are several different privacy models to consider, with each model requiring different modifications to traditional tests.  We cover the various privacy models one might consider and why they fall short in providing privacy of the group that each sample belongs to or drastically impact utility for the task at hand.

One approach would be to keep a dataset of sensitive attributes of users in a secure environment that cannot be directly accessed and only allow certain analyses to be conducted, perhaps via a rigorous auditing procedure.  However, if multiple experiments are conducted on the secure dataset and only the aggregated results are revealed, it may eventually become possible to reconstruct the sensitive attributes.  The fundamental law of information recovery \cite{DinurNi03} states roughly that given enough (fairly) accurate results on a dataset, a non-trivial fraction of the data can be reconstructed.  Privacy mitigations, such as differential privacy from \citet{DworkMcNiSm06}, can be used to ensure each outcome for each experiment has a sufficient amount of noise to protect the privacy of each individual. 
However, note that many times the sensitive features of users are not changing, such as gender, race, or ethnicity data; so with hundreds of A/B tests ran every day, it is clear that the overall privacy loss becomes massive relatively quickly.  Hence the fundamental law of information recovery may be a threat even with noise added.  We could limit the number of experiments that can be conducted or add so much noise that the results are worthless, but these are not practical solutions. We want to ensure both usefulness of A/B testing and privacy of these features over a very long or even infinite time horizon.  

Another approach is to create a synthetic dataset subject to differential privacy, which can be created once and then any experiment would use this synthetic dataset.  Note that we would no longer need to worry about the privacy loss accumulating with each experiment, since each computation would be post-processing on the synthetic data, and privacy loss cannot be increased due to post-processing \cite{DworkMcNiSm06}.  There are many recent works on generating differentially private synthetic data.  In fact, there was a NIST competition to create the most accurate DP synthetic data.  As noted in this \href{https://www.nist.gov/blogs/cybersecurity-insights/differentially-private-synthetic-data}{NIST blog post}, 
the way to generate DP synthetic data is to build a probabilistic model to generate samples from a population in which the true data was sampled from.  Unfortunately, for A/B testing, we want to be able to perform a join on the data by some user ID so that we know which members were in the control or treatment groups and what their outcomes were.  Hence, we need the identifier for each user in the synthetic data to match the identifier used to split between these groups so that later experiments can be evaluated, which is not possible if we are generating synthetic data by drawing from an estimated population distribution.

We then turn to local differential privacy as a solution - this ensures that (a) we have a privatized dataset that can be used an unlimited number of times without compromising privacy of which group each user belongs to, and (b) allows us to join with another dataset by a unique identifier for each user.  In this case, each user will have its sensitive group go through a local DP mechanism, and the result is then stored and used in all subsequent analyses, leaving the user ID unchanged.  The dataset that has been privatized contains a user ID and its corresponding privatized feature, which will remain unchanged. As new experiments are conducted, we would like to determine whether there is a significant difference across the sensitive features of the users.  In this case, the outcomes of users are not privatized, but we will want to join them with the dataset containing the privatized features, in this case groups.  The question becomes, should the analyses conducted across the sensitive groups be modified to account for the privatization mechanism, or can we utilize the same workflows as if no privatization method was used?  

For our setting, we are not concerned with the privacy of the outcomes themselves, as these outcomes are traditionally used internally to determine the impact of different experiments.  Recent work from \cite{JuarezKo22} has also considered privatizing the group membership with local DP, but also privatize the outcome, as the outcome might be ``correlated with group membership."  This is precisely what we want to test.  Considering the privacy of only the features can also be thought of as a complement to some recent work on label-differential privacy where the labels themselves are the private information while the features are not treated as sensitive.  We point out that the privacy setting for label differential privacy is in the global model, whereas in this work we consider feature level privacy in the local setting.  We also mention that in the global model of privacy, there has been work on private statistical tests that consider privatizing outcomes, but \emph{not} the membership of the groups themselves \cite{UhlerSlFi13, YuFiSlUh14}.  In particular, the groups may simply be whether someone is in the control or treatment group, as in A/B testing, which should not contain sensitive information about the user.   The privacy model we consider here can be viewed as a variant to a traditional local DP set up, as local DP would consider privatizing the features, e.g. the group membership, and the outcome together.  

In this privacy model, which we call the \emph{local group DP} (LGDP) model, we will consider several scenarios with different privacy mechanisms.  First, we will consider binary outcomes, which can easily be extended to categorical outcomes.  The simplest setting here would be to test the difference between proportions in two sensitive groups, which $Z$-tests are traditionally used  as well as for calculating confidence intervals.   We then consider $\chi^2$- independence testing for determining whether there is a difference between proportions over multiple sensitive groups simultaneously.  As there are existing (fully) local DP tests for independence \cite{GaboardiRo18}, we compare the statistical power in our less restrictive setting.  

We then consider outcomes that are real valued.  In such cases, the local DP approach would bound outcomes within some interval and then add noise that is proportional to this bound, or discretize the outcomes to make it categorical.  Either approach introduces additional parameters that may bias the data and would be difficult to correct for without additional assumptions on the data distribution.  We consider both confidence intervals and hypothesis tests for the difference in means, which are traditionally based on the t-test, as well as differences in means across multiple sensitive groups simultaneously via one-way Analysis of Variation (ANOVA) tests.

Lastly, we will consider the application to A/B tests where users are split across two groups randomly and which group each user belongs to is known to the analyst and does not need privatization.  In this case, we test whether there is a significant change from the control to treatment group in the difference of outcomes between two sensitive groups. In particular, this can be used to determine whether a feature creates a smaller difference between means of sensitive groups than without the feature.

Although we cover multiple statistical tests in different settings, our approach follows the general $\chi^2$ framework, which was also used to design new classes of private tests in the global model of privacy from \citet{KiferRo17} and in the local DP setting from \citet{GaboardiRo18} for goodness of fit and independence testing with categorical outcomes.  We show that the traditional tests, which do not account for the privacy mechanisms, can also be unified with the same framework, with no loss of statistical power.  This framework is general enough to then apply to various privacy mechanisms over groups with binary and continuous outcomes.  We now summarize our contributions below:
\begin{itemize}
\item We present a new privacy model over sensitive groups called local group DP that is less restrictive than the local DP setting, which would also privatize the outcome for each sample.
\item We present a unified framework based on general $\chi^2$ tests that will allow us to easily derive new asymptotically valid statistical tests for various privacy mechanisms and hypothesis tests, including $Z$-tests, independence tests, t-tests, and ANOVA.
\item We demonstrate that confidence intervals with traditional methods that do not factor in the privacy mechanism lead to false conclusions, while our approach is able to compute empirically valid confidence intervals that converges to the original test as $\diffp \to \infty$.
\item We show that when testing whether there is a significant difference in means across more than 2 groups, our tests can achieve higher statistical power for the same level of privacy than traditional approaches.
\item We apply our general framework to A/B tests for testing whether the difference across sensitive groups has changed between the control and treatment groups, which is not considered to be sensitive.   
\end{itemize} 

\subsection{Related Work}

Airbnb recently described an approach that could be used to determine disparate impacts across sensitive groups in \href{https://news.airbnb.com/wp-content/uploads/sites/4/2020/06/Project-Lighthouse-Airbnb-2020-06-12.pdf}{Project Lighthouse} \cite{lighthouse2022} by collecting sensitive information, e.g. race and ethnicity, about users in a privacy centric way --- using $k$-anonymity and $\ell$-diversity to hide the identity of users.  The approach works by considering a dataset of users with an ID and a feature vector, in their case the number of reservations made and the number of reservations rejected.  The ID is then replaced with an \emph{anonymous ID} and the features are made less granular to ensure that there are at least $k$ users with the same features.  The anonymous IDs as well as profile pictures and names are given to an independent council, which labels each anonymous ID with a race/ethnicity.  This data is then joined with the $k$-anonymous features.  If a particular feature has all the same race/ethnicity, then $\ell$-diversity is used to change some of the groups.  The resulting dataset can then be used to determine whether rejection rate is independent of race/ethnicity.  It is not immediately clear how statistical tests might be modified to determine whether there is a significant difference between groups, as sometimes $k$-anonymous features might result in very wide ranges of values.  Furthermore, what if it is later determined that the feature vector was not rich enough, perhaps day of the week should also be included with the rejections, then the whole procedure needs to be redone.  Additionally, if some change had been implemented, we would like to know whether the rejection rate difference in groups had changed.  Unfortunately, $k$-anonymous groups do not satisfy any composition property, so doing this procedure again with different features and hence different groups might allow someone to be identified, with the race/ethnicity information being revealed.   

We now cover several additional related works, although we point out that these works do not consider the privacy model we do here.  There have been several works that have considered statistical hypothesis tests under local differential privacy, including simple tests like distribution testing or goodness of fit and composite tests like independence testing \cite{GaboardiRo18, Sheffet18, AcharyaCaFrTy19} as well as A/B testing \cite{DingNoLiAl18, Waudby-SmithWuRa22}.  We point out that in the local DP A/B testing work, all works consider a sample's membership to either the treatment or control group as not being private, but the outcome is, while in our setting the membership of the sensitive group within the control or treatment group is privatized.  Further, in the local setting, there has been work in mean estimation and confidence intervals \cite{GaboardiRoSh19, JosephKuMaWu19, Waudby-SmithWuRa22} as well as minimax optimal schemes \cite{DuchiJoWa18, BhowmickDuFrKaRo19}.  We also mention several works on hypothesis testing in the global model of DP, including simple tests like distribution testing or goodness of fit \cite{CaiDaKa17, CanonneKaMcSmUl19, CanonneKaMcUlZa20, AwanSl20} and more composite tests, like independence testing \cite{VuSl09, UhlerSlFi13, YuFiSlUh14, WangLeKi15, GaboardiLiRoVa16, KiferRo17, KakizakiFuSa17} as well as ANOVA  \cite{CampbellBrRiGr18, SwanbergGlGrRiGrBr19} and linear regression \cite{Sheffet17}.  Also in the global model there has been work on mean estimation and confidence intervals, including  \cite{KarwaVa17, WangKiLe19, BiswasDoKaUl20, CovingtonHeHoKa21}.  

In Table~\ref{table:refs} we show how previous work in differentially private inference can be organized, based on whether outcomes, groups, or both are privatized in either the local or global privacy settings.  For goodness of fit tests or mean estimation, samples are not considered to be in different groups, and for A/B testing whether a sample is in the treatment or control group is not sensitive.  Prior work on independence testing and linear regression have considered privatizing both the group/features as well as the outcomes.  To our knowledge, no prior work have considered privatization of only the group of each sample.  

\begin{table}[h!]
\centering
\begin{tabular}{|c | c | c|} 
 \hline
  & Local DP & Global DP \\ [0.5ex] 
 \hline
 Privatize Outcome & \shortstack{A/B Testing: \cite{DingNoLiAl18, Waudby-SmithWuRa22}\\ Mean Estimation: \cite{GaboardiRoSh19, JosephKuMaWu19}} &  \shortstack{GOF: \cite{CaiDaKa17, CanonneKaMcSmUl19, CanonneKaMcUlZa20, AwanSl20}\\ A/B Testing: \cite{UhlerSlFi13, YuFiSlUh14, MovahediCaHoKnLiLiSaSeTa21}\\ Mean Estimation: \cite{KarwaVa17, BiswasDoKaUl20, CovingtonHeHoKa21}}  \\ [0.5ex] 
 \hline
 \shortstack{Privatize Outcome \\ and Group/Features} &  \shortstack{Independence Testing: \cite{GaboardiRo18, Sheffet18, AcharyaCaFrTy19} }  &  \shortstack{Independence Testing: \cite{VuSl09, WangLeKi15, GaboardiLiRoVa16, KiferRo17, KakizakiFuSa17}\\ ANOVA: \cite{CampbellBrRiGr18, SwanbergGlGrRiGrBr19}\\ Linear Regression: \cite{Sheffet17, AlabiVa22, FerrandoWaSh22}}   \\ [0.5ex] 
 \hline
\end{tabular}
\caption{Prior Work on Differentially Private Inference.\label{table:refs}}
\label{table:1}
\end{table}

\section{Privacy Preliminaries}\label{sect:privacyPrelim}

We start with the definition of local differential privacy \cite{Warner65, EvfimievskiGeSr03, KasiviswanathanLeNiRaSm11}, which will treat the group and the outcome of each group to be sensitive information and contrast that with the privacy model we consider here.

\begin{definition}
An algorithm $M: \cX \to \cY$ is $\epsilon$-locally differentially private if for all possible inputs $x,x' \in \cX$ and for all outcomes $S \subseteq \cY$ we have
\[
\Pr[M(x) \in S] \leq e^{\epsilon} \Pr[M(x')\in S].
\]
\end{definition}
With this definition, we then need to define what a possible input might entail.  A user's data $x$ will consist of two components $x=(j,o)$ where $j\in [g]$ comes from one of $g$ possible groups and $o \in \cO$ is some outcome.  We will consider both $O = \R$ and $O \in \{ 0,1\}$ being binary, which can then be extended to categorical.  The local DP definition then requires the output distribution of $M$ to not change by much if someone changes their group and their outcome.  We will instead focus on the case where a user's outcome is already known to the data analyst, but the group membership is not.
\begin{definition}
An algorithm $M: [g] \times \cO \to \cY$ is $\epsilon$-local group DP (LGDP) if $M'(\cdot, o): [g] \to \cY$ is $\epsilon$-local DP for all outcomes $o \in \cO$, i.e. $\epsilon$-local DP in its first argument.  
\end{definition}

This less restrictive definition of privacy ensures deniability for the group a user belongs to.  In particular, for attributes, like race and ethnicity, the information is incredibly sensitive and does not change over time, while outcomes, such as salary or conversion rate on current ad campaigns, can change over time.  Outcomes are also not tied to the groups that users belong to, or that is precisely what we want to test --- whether an outcome is independent of group membership or not.  Another way to interpret LGDP is that it is local DP on the group membership of each user, disregarding the outcomes that are joined with the user.  

We now present the fundamental local DP mechanisms for privatizing categorial inputs.  We assume that the sensitive data is a group $j \in [g]$.  We mainly focus on three mechanisms, where one will actually be a special case of another.  There have been several great works on improving run time and communication costs of these mechanisms \cite{AcharyaSuZh19, FeldmanNeNgHuTa22}, but we largely ignore these issues in this work and are primarily interested in the tradeoffs between statistical power and privacy, and their comparisons to their non-private counterparts.  Each of the following mechanisms are $\epsilon$-locally differentially private.  We will sometimes abuse notation and sometimes write the input to the mechanisms as a one-hot vector in $\{0,1 \}^g$ rather than an element in $[g]$.  

\begin{definition}[$g$-Randomized Response\footnote{\citet{Warner65} introduced randomized response for $g =2$ groups without the privacy parameter $\diffp$, but this generalized mechanism is typically attributed to this work.} from \citet{Warner65}]
The $g$-randomized response mechanism $M: [g] \to [g]$ returns its input with probability $\frac{e^\epsilon}{e^\epsilon + g-1}$ and otherwise uniformly selects a different outcome with equal probability, i.e. 
\[ 
\Pr[M(j) = j] = \frac{e^\epsilon}{e^\epsilon + g-1}, \qquad \text{ and } \Pr[M(j) = \ell] = \frac{1}{e^\epsilon + g-1} \ \forall \ell \neq j.
\]
\end{definition}  
Note that we will typically use the outcome space of randomized response as $\{0,1\}^g$, which we can easily map an outcome $j \in [g]$ to a vector in $\{0,1 \}^g$ by making position $j$ equal to 1 and all other coordinates are zero.

\begin{definition}[Bit Flipping\footnote{Also referred to as RAPPOR.} from \citet{ErlingssonPiKo14}]
The bit flipping mechanism $M: [g] \to \{0, 1\}^g$ with input $j$ creates a vector of length $g$ with all zeros except a one in position $j$, iterates through each coordinate, and flips the bit with probability $\frac{1}{e^{\epsilon/2} + 1}$ or otherwise keeps it the same. 
\end{definition}  

It is known that Bit Flipping typically provides more accurate results than $g$-Randomized Response for the same level of privacy parameter $\diffp$ in the high privacy regime, i.e. $\diffp < 1$, while $g$-Randomized Response provides more accurate results when $\diffp > \log(g)$.  We now present a local DP mechanism that can achieve better accuracy over broader privacy regimes.  Note that we will write vector quantities as $a = (a[1], \cdots, a[g[)$.

\begin{definition}[Subset Mechanism from \citet{YeBa17}]\label{defn:subset}
The subset mechanism with parameter $k < g$ is $M: [g] \to \cY_k$ where $\cY_k = \{ y \in \{0,1 \}^g : \sum_{i=1}^d y_i = k  \}$,  and we describe the iterative process for $M(j) = (M(j)[\ell] : \ell \in [g])$ where the probability the $j$-th coordinate $M(j)[j] =1$ is $\tfrac{ke^\diffp}{k e^\diffp + g - k}$ and otherwise $M(j)[j] = 0$.  If $M(j)[j] = 1$, then we uniformly select $k-1$ distinct coordinates from the set $[g]\setminus \{j\}$ to be 1 or if $M(j)[j] = 0$, then we uniformly select $k$ distinct coordinates from the set $[g]\setminus \{j\}$ to be 1, with the remaining coordinates set to 0.  
\end{definition}

The subset mechanism can be thought of as a way to unify the $g$-randomized response and the bit flipping mechanism.  In fact when $k = 1$, we recover the $g$-randomized response mechanism.   We will pick $k = \lceil \tfrac{g}{e^\epsilon +1} \rceil$, as Proposition III.2 in \citet{YeBa17} claims this to be the optimal choice for $k$.  We will then set $k = \lfloor \tfrac{g}{e^\epsilon + 1} \rfloor$ or $\lceil \tfrac{g}{e^\epsilon + 1}\rceil$, so we will set $\epsilon$ in order for $\tfrac{g}{e^\epsilon + 1} $ to be an integer.  Note that for $k = 1$, the subset mechanism becomes the $g$-randomized response mechanism.  

\citet{YeBa17} showed that the subset mechanism is optimal in terms of $\ell_2$-error for \emph{medium} privacy parameters, while $g$-randomized response is optimal for the high privacy loss regime ($\epsilon > \log(g)$) and the bit flipping mechanism is optimal in the low privacy loss regime ($\epsilon < 1$).  When $g = 2$, we only consider the randomized response mechanism while for $g > 2$ we provide tests for these three privacy mechanisms.

\section{General Chi Squared Theory}\label{sect:generalChi}

We now present the general minimum $\chi^2$ theory, which will be crucial to unifying our various statistical tests with a single framework.  We directly follow the presentation from \cite{Ferg96} (Chapter 23). We are not the first to apply the general $\chi^2$ theory for private hypothesis tests.  Previous work from \cite{KiferRo17} and \cite{GaboardiRo18} used the general $\chi^2$ tests to design valid hypothesis tests for categorical data in goodness of fit and independence testing.  In this work, we also show how the same general framework can be used to compute confidence intervals and develop tests for continuous outcomes.  We then present the same general framework to set up notation and so that our work is self contained.  We show that the classical tests can instead be analyzed as a general $\chi^2$ test, which will then allow us to design new valid private hypothesis tests for various privacy mechanisms in the LGDP model.   

Consider random vectors $\{ Y_i \in \R^d: i \in [n]\}$ where each $Y_i$ is selected i.i.d. from some distribution with parameters $\truetheta \in \Theta$ where  $\Theta$ is a non-empty open subset of $\R^\nu$, with $\nu < d$. The function $A$ maps a $\nu$-dimensional parameter $\theta\in \Theta$ into a $d$-dimensional vector.  We will denote coordinates of a vector similar to Python, so that for vector $\theta \in \R^d$, we denote $\theta[j]$ as the $j$th coordinate of $\theta$.

The null hypothesis $H_0$ can be written as there being a $\truetheta\in\Theta$ such that with $\bar{Y} \defeq \tfrac{1}{n} \sum_{i=1}^n Y_i$ we have the following,
\begin{equation}
\sqrt{n}\left( \bar{Y} - A(\truetheta) \right) \stackrel{D}{\to} N(0, C(\truetheta))
\label{eq:asym_norm}
\end{equation}
where $C(\theta) \in \R^{d\times d}$ is a covariance matrix and we write $\stackrel{D}{\to}$ to denote convergence in distribution as $n$ increases.  We measure the distance between $\bar{Y}$ and $A(\theta)$ with a test statistic given by the following quadratic form:
\begin{equation}
\QuadChi{n}(\theta) = n \left(\bar{Y}- A(\theta) \right)^\intercal \hspace{-0.3em}M(\theta) \left(\bar{Y} - A(\theta)\right)
\label{eq:quadratic}
\end{equation}
where $M(\theta) \in \R^{d\times d}$ is a symmetric positive-semidefinite matrix.  We make standard regularity assumptions about $A(\theta)$, $M(\theta)$, and $\Theta$, detailed as Assumption 4.1 in \cite{KiferRo17}. 

When $\truetheta$ is not known, as is the case in this work, we need to estimate a good parameter $\mintheta$ to plug into \eqref{eq:quadratic}. As was shown in \cite{KiferRo17}, we need only use a rough estimate of $\truetheta$ based on the data, call it $\phi(\bar{Y})$, then we can plug it into the middle matrix to get:
\begin{equation}
\estQuadChi{n}(\theta) =  n \left(\bar{Y} - A(\theta) \right)^\intercal M(\phi(\bar{Y}) ) \left( \bar{Y} - A(\theta)\right).
\label{eq:R}
\end{equation}
and then set our estimator $\mintheta=\arg\min_{\theta\in\Theta} \estQuadChi{n}(\theta)$. The test statistic becomes $\estQuadChi{n}(\mintheta)$ and the following theorem describe its asymptotic properties under the null hypothesis.

\begin{theorem}[\citet{KiferRo17}]
Let $d' \leq d$ be the rank of $C(\truetheta)$.
If regularity assumptions and \eqref{eq:asym_norm} hold, and, for all $\theta \in \Theta$,
\[
C(\theta)M(\theta)C(\theta) = C(\theta) \qquad \text{ and } \qquad 
 C(\theta)M(\theta)  \dot A(\theta) = \dot A(\theta)
\]
where $\dot A(\theta)$ denotes the partial derivatives of $A(\theta)$, then for $\estQuadChi{n}(\theta)$ given in \eqref{eq:R} where $\phi(\bar{Y})$ converges in probability to $\truetheta$ we have: 
\[
\min_{\theta \in \Theta \subseteq \R^\nu} \estQuadChi{n}(\theta)  \stackrel{D}{\to} \chi^2_{d'-\nu}.
\] 
\label{thm:ferg}
\end{theorem}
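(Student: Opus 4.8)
The plan is to reduce the minimization to a quadratic form in an asymptotically Gaussian vector and then identify its limiting law using the two algebraic side conditions. Write $W_n \defeq \sqrt{n}\left(\bar{Y} - A(\truetheta)\right)$, so that \eqref{eq:asym_norm} gives $W_n \stackrel{D}{\to} W \sim N(0, C(\truetheta))$. First I would localize the optimization by reparametrizing with $\tau = \sqrt{n}(\theta - \truetheta)$ and Taylor-expanding $A$ about $\truetheta$, which yields $\sqrt{n}\left(\bar{Y} - A(\theta)\right) = W_n - \dot A(\truetheta)\tau + o_P(1)$ uniformly on compact sets in $\tau$. Since $\phi(\bar{Y}) \stackrel{P}{\to} \truetheta$ and $M$ is continuous, $M(\phi(\bar{Y})) \stackrel{P}{\to} M(\truetheta)$, so $\estQuadChi{n}(\theta)$ is asymptotically equal to $(W_n - \dot A\tau)^\intercal M(\truetheta)(W_n - \dot A\tau)$, a convex quadratic in $\tau$.

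Next I would solve this weighted least squares problem explicitly. Abbreviating $\dot A = \dot A(\truetheta)$ and $M = M(\truetheta)$, the minimizer is $\hat\tau = (\dot A^\intercal M \dot A)^{-1}\dot A^\intercal M W_n$ (the regularity assumptions guarantee $\dot A^\intercal M \dot A$ is invertible), and a short computation gives minimum value $W_n^\intercal (M - P)\, W_n$ with $P \defeq M\dot A(\dot A^\intercal M\dot A)^{-1}\dot A^\intercal M$. By the continuous mapping theorem this converges in distribution to $W^\intercal(M - P)W$ with $W \sim N(0, C)$, so the problem becomes identifying this limiting quadratic form.

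To finish, factor $C = LL^\intercal$ with $L \in \R^{d\times d'}$ of full column rank $d'$, and write $W = LZ$ with $Z \sim N(0, I_{d'})$, so that the form becomes $Z^\intercal G Z$ where $G \defeq L^\intercal(M-P)L$ is symmetric. A symmetric quadratic form $Z^\intercal G Z$ in a standard Gaussian is $\chi^2_k$ exactly when $G$ is idempotent, in which case $k = \mathrm{tr}(G)$, so I would verify idempotency using the hypotheses. The condition $C M C = C$ forces $L^\intercal M L = I_{d'}$ (because $L$ has full column rank). Setting $U \defeq L^\intercal M \dot A$, the second condition $C M \dot A = \dot A$, equivalently $\dot A^\intercal M C = \dot A^\intercal$, gives $U^\intercal U = \dot A^\intercal M C M \dot A = \dot A^\intercal M \dot A$, so that $L^\intercal P L = U(U^\intercal U)^{-1}U^\intercal$ is the orthogonal projection onto the column space of $U$, which has rank $\nu$. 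Hence $G = I_{d'} - U(U^\intercal U)^{-1}U^\intercal$ is the complementary projection, idempotent of rank and trace $d'-\nu$, yielding $\chi^2_{d'-\nu}$.

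The main obstacle I anticipate is the localization in the first step rather than the algebra in the last. One must use the regularity assumptions to ensure that, with probability tending to one, $\estQuadChi{n}$ is uniformly well approximated by its quadratic surrogate on a shrinking neighborhood of $\truetheta$, that the surrogate's minimizer $\hat\tau$ is $O_P(1)$ in the $\tau = \sqrt{n}(\theta-\truetheta)$ scale (so the Taylor remainder is genuinely $o_P(1)$), and that an argmin continuous-mapping argument then transfers the convergence from the surrogate to $\estQuadChi{n}(\mintheta)$. Once the minimizer is controlled stochastically, the identification of the limit as $\chi^2_{d'-\nu}$ follows cleanly from the two displayed side conditions.
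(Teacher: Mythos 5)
Your proposal is correct: the localization to a weighted least-squares problem in $\tau=\sqrt{n}(\theta-\truetheta)$, the explicit minimum $W_n^\intercal(M-P)W_n$ with $P=M\dot A(\dot A^\intercal M \dot A)^{-1}\dot A^\intercal M$, and the verification via the factorization $C=LL^\intercal$ that $L^\intercal(M-P)L$ is an idempotent of trace $d'-\nu$ (using $CMC=C$ to get $L^\intercal M L = I_{d'}$ and $CM\dot A=\dot A$ to identify $L^\intercal P L$ as a rank-$\nu$ orthogonal projection) is exactly the classical minimum-$\chi^2$ argument. This is essentially the same route as the proof the paper relies on --- the theorem is stated here without proof, citing \citet{KiferRo17}, who in turn follow Ferguson's Chapter 23 treatment --- including your (appropriate) deferral of the $O_P(1)$ control of the minimizer and the uniform quadratic approximation to the stated regularity assumptions.
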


Note that the middle matrix $M(\theta)$ is typically selected to be the inverse of the covariance matrix $C(\theta)$, when it is invertible.  Otherwise, we use a generalized inverse for $C(\theta)$, as is done for the classical Pearson $\chi^2$ test for independence.  \citet{KiferRo17} also showed that when the privacy mechanism makes a singular covariance matrix when privacy is not enabled actually invertible, it is then possible to \emph{project out} the corresponding eigenvector whose eigenvalue would be zero without noise.  This effectively reduces the degrees of freedom of the resulting $\chi^2$ statistic by one and makes the tests more powerful. 

In Algorithm \ref{alg:general_chi_sq}, we outline our general approach to designing new hypothesis tests with $1-\alpha$ significance, which, given our assumptions hold, have asymptotically a $\chi^2$ distribution with the corresponding degrees of freedom.  

\begin{algorithm}
\caption{General $\chi^2$ approach}\label{alg:general_chi_sq}
\begin{itemize}
\item Based on the hypothesis test, compute a random vector $Y_i \in \R^d$ that are sampled i.i.d. from the unknown population distribution with parameter space $\Theta \subset \R^\nu$ where $\nu < d$.
\item Compute the covariance matrix $C(\theta)$ of $Y_i$ under the null hypothesis for general $\theta \in \Theta$.
\item Compute estimates $\hat{\theta}_n$ that converge in probability to $\truetheta$ and write $\hat{C} \defeq C(\hat{\theta}_n)$.
\item Calculate the inverse or general inverse of the covariance matrix, i.e. $C(\hat{\theta}_n)^\dagger$, where $C(\theta^*)$ has rank at most $d' \leq d$.
\item Compute the $\chi^2$ test statistic
\begin{equation}
\hat{D} = \min_{\theta \in \Theta} \left\{ n \left( \bar{Y} - \E[Y_i ; \theta] \right)^\intercal C(\hat{\theta}_n)^\dagger  \left( \bar{Y} - \E[Y_i ; \theta] \right)\right\}
\label{eq:chiSqStat}
\end{equation}
\item If $\hat{D} > \chi^2_{d' - \nu, 1- \alpha}$, then reject the null hypothesis.
\end{itemize}
\end{algorithm}

Hence, for the various tests, we will need to figure out what random vector $Y_i$ to use and the parameter space $\Theta$ to optimize over.

\section{Difference in Proportions with Binary Outcomes}
We start with the case where outcomes are binary and we want to test whether there is a statistically significant difference between the success probabilities across sensitive groups.  There are traditionally two ways to test whether the success probabilities are different between two groups, either with a $Z$-test or with a $\chi^2$-test.  In fact, many built in proportion test packages in Python and R use $\chi^2$, rather than $Z$-tests, as the default to determine $p$-values.\footnote{In fact, the \texttt{prop.test} method in R actually computes the $\chi^2$ statistic, see \href{https://www.rdocumentation.org/packages/stats/versions/3.6.2/topics/prop.test}{RDocumentation}.}  One useful property of the $Z$-test, compared to the $\chi^2$ test is that it can readily be used to compute confidence intervals for the difference in proportions.  Furthermore, the difference between the $Z$ and $\chi^2$ tests comes from the experimental design.  For $Z$-test, we fix two sample sizes: $n_1$ for group $1$ and $n_2$ for group $2$.  While for $\chi^2$-tests, we sample $n$ data points where there is some probability $\pi$ of being in group $1$, and otherwise probability $1-\pi$ of being in group $2$.  It is the latter experimental design that we will be primarily interested in, since we will assume that we do not know which group each person belongs to initially.  However, we do analyze $Z$-tests to see if they can still be used to compute valid confidence intervals for the difference in proportions between two groups.

\subsection{$Z$-test}

We will assume that we have a binary variable and two groups.  We will have data $\{(G_i,X_i)\}_{i=1}^n$, where we first sample which group each $X_i$ belongs to, which we model with $G_i \sim \text{Bern}(\pi) + 1$ for an unknown $\pi \in [0,1]$, then we have $X_i | G_i \sim \text{Bern}(p_{G_i})$, where $p_g \in [0,1]$ is the probability of success for group $g \in \{1,2\}$.  One common test we may want to conduct is to test the null hypothesis $\texttt{H}_0: p_1 = p_2 + \Delta$.  We will write $N[1] = \sum_{i=1}^n \1{G_i=1}$, $N[2] = n - N[1]$, and $\bar{X}[g] = \tfrac{1}{N[g]}\sum_{i=1}^{N[g]} X_i \cdot\1{G_i = g}$ for $g \in \{ 1,2\}$. To carry out the difference in proportions test, we would form the following test statistic.\footnote{If $\Delta = 0$, i.e. no difference and equal variances in both groups, the test statistic can use the \emph{pooled} variance, which would result in the following term in the denominator of the test statistic: $\sqrt{\frac{N[1] \cdot \bar{X}[1] + N[2] \cdot \bar{X}[2]}{ N[1] + N[2]} \left(1/N[1] + 1/N[2] \right)}.$}
\[
T(\bar{X}[1], \bar{X}[2], N[1], N[2]; \Delta) = \frac{\bar{X}[1] - \bar{X}[2] - \Delta}{\sqrt{\bar{X}[1] ( 1 - \bar{X}[2])/N[1]+ \bar{X}[2] ( 1 - \bar{X}[2])/N[2] }}
\]
We then compare the test statistic with its asymptotic distribution under the null hypothesis, which is a standard normal.  That is for significance level $1-\alpha$, ($\alpha = 0.05$ or $0.01$ typically, but we will use $\alpha = 0.05$ throughout), we reject $\texttt{H}_0$ if 
\[
T(\bar{X}[1], \bar{X}[2], N[1], N[2]; \Delta)  \notin [ \Phi^{-1}(\alpha/2), \Phi^{-1}(1-\alpha/2)].
\]  

We now turn to privatizing the group membership with a mechanism $M$ so that our samples are $\{(M(G_i), X_i) \}_{i=1}^n$ and then analyze the resulting $Z$-test.  We now change the set up where each individual belonging to group $g \in \{1,2\}$ can be switched to the other group $3-g$ with some probability $\leq \tfrac{1}{2}$.  In particular, for $\diffp$-LGDP, we can use randomized response $M: \{1,2 \} \to \{ 1,2\}$ where $\Pr[M(g) = g] = \tfrac{e^\diffp}{1 + e^\diffp}$.  Hence the privatized data for individual $i$ in group $g$ is a mixture of two Bernoulli's, which we write as
\begin{equation}
X^\diffp_{i}| (G_i = g) =  \tfrac{e^\diffp}{1 + e^\diffp} \text{Bern}(p_g) +  \tfrac{1}{1 + e^\diffp} \text{Bern}(p_{3-g}) = \text{Bern}( \tfrac{e^\diffp}{1 + e^\diffp} p_g +  \tfrac{1}{1 + e^\diffp}p_{3-g}).
\label{eq:mixBern}
\end{equation}

Note that the sample sizes of each group changes due to some outcomes from group $g$ switching to group $3-g$.  We will write the new (randomized) sample sizes for each group $g$ as $N^\diffp[g]$.  Note that $n = N^\diffp[1] + N^\diffp[2]$.
Hence, the number of successful outcomes that we see in group $g$ is then
\[
\bar{X}^\diffp[g] = \sum_{i=1}^{N^{\diffp}[g]} X_{i}^\diffp \cdot \1{M(G_i) = g}.
\]

Note that in the special case when $\Delta = 0$, we have $\texttt{H}_0: p_0 = p_1 = p$, which in this case we would still have $X_i^\diffp[g] \sim \text{Bern}(p)$ where $p_1 = p_2 = p$.  Carrying out the standard test statistic would give us $T(\bar{X}^\diffp[1], \bar{X}^\diffp[2], N^\diffp[1], N^\diffp[2]; \Delta=0)$.  The main difference now is that the number of samples $N^\diffp[g]$ in each group $g$ is randomized.  We then check to see if conducting the original $Z$-test as if there were no privacy, still provides valid results.  When $\Delta \neq 0$, we compute the expected difference between the two proportions,
\[
\E\left[\tfrac{1}{N^\diffp[1]}\sum_{j =1}^{N^\diffp[1]} X^\diffp_{i}\cdot \1{M(G_i) = 1} - \tfrac{1}{N^\diffp[2]}\sum_{j =1}^{N^\diffp[2]}  X^\diffp_{i} \cdot \1{M(G_i) = 2} \right] 
\]
However, the expectation becomes much more complicated, due to the random number of trials $N^\diffp[g]$.  To approximate this expectation, we try treating $N^\diffp[g]$ as fixed for $g \in \{1,2 \}$, to get the following expression, which can then be used to \emph{correct} confidence intervals.  
\begin{equation}
 \Delta^\diffp =  \left(\tfrac{n \pi e^\diffp}{(1+e^\diffp)N^\diffp[1]} - \tfrac{n \pi }{(1+e^\diffp)N^\diffp[2]} \right) \Delta
\label{eq:DeltaEpsilon}
\end{equation}
Since $\pi$ is not known, we can estimate it in the following way to get an unbiased estimator for it:
\[
\hat\pi = \left( \frac{e^\diffp +1}{ e^\diffp -1 } \right) \left( \frac{N^\diffp[1] + N^\diffp[2]}{n} - \frac{1}{e^\diffp + 1}\right).
\]

We then consider the following $Z$-statistic, $T(\bar{X}^\diffp[1], \bar{X}^\diffp[2], N^\diffp[1], N^\diffp[2]; \Delta=\Delta^\diffp)$, to account for privacy.  We compute confidence intervals in Figure~\ref{fig:ZTestWithCorrection} with level of significance $1-\alpha = 95\%$. Observe that this correction gives us valid confidence intervals (top panel), and appropriate type I error rates (bottom panel), compared to the uncorrected statistic. 

\begin{center}
\begin{figure}
  \includegraphics[width=0.46\linewidth]{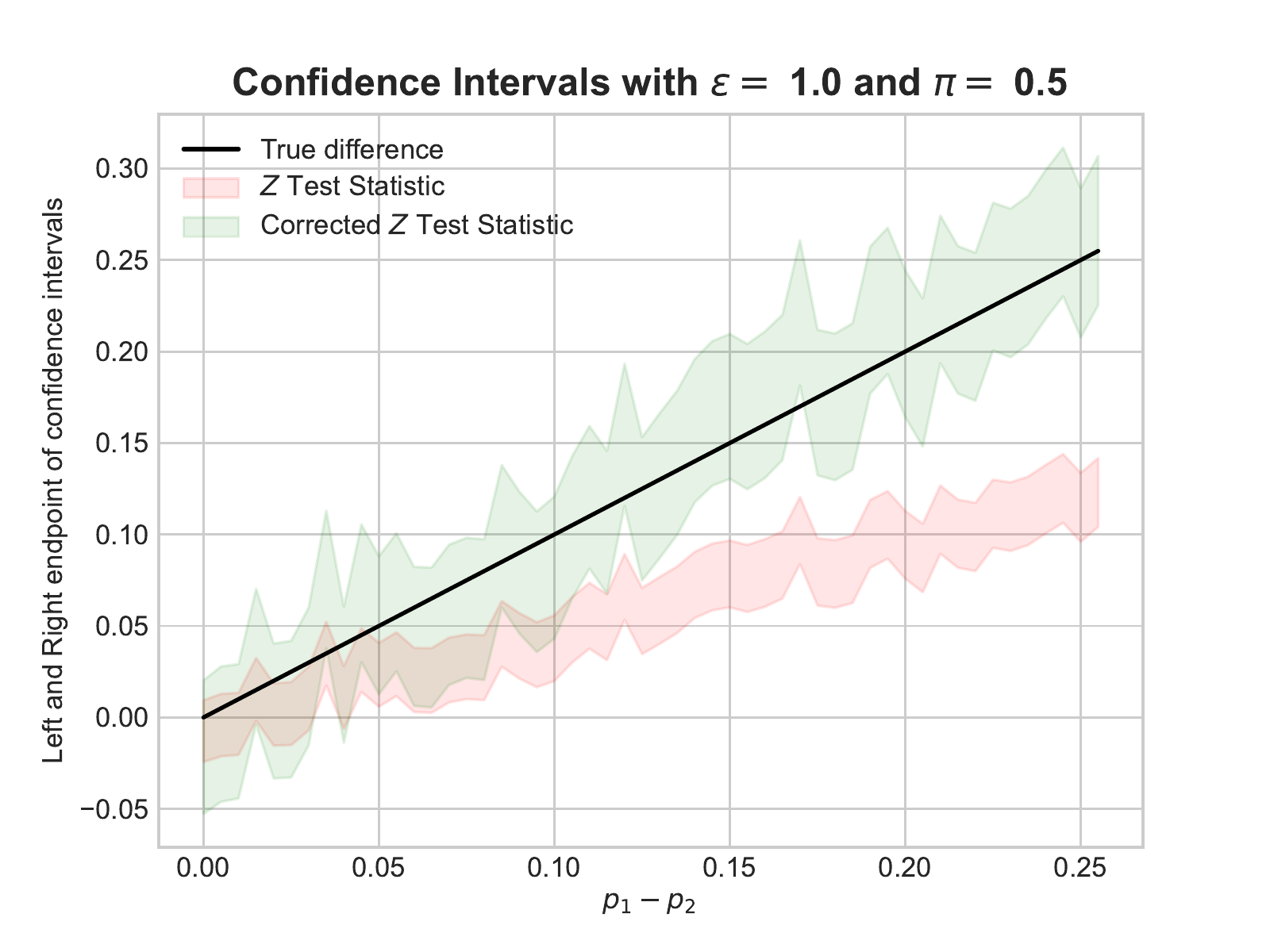}
  \includegraphics[width=0.46\linewidth]{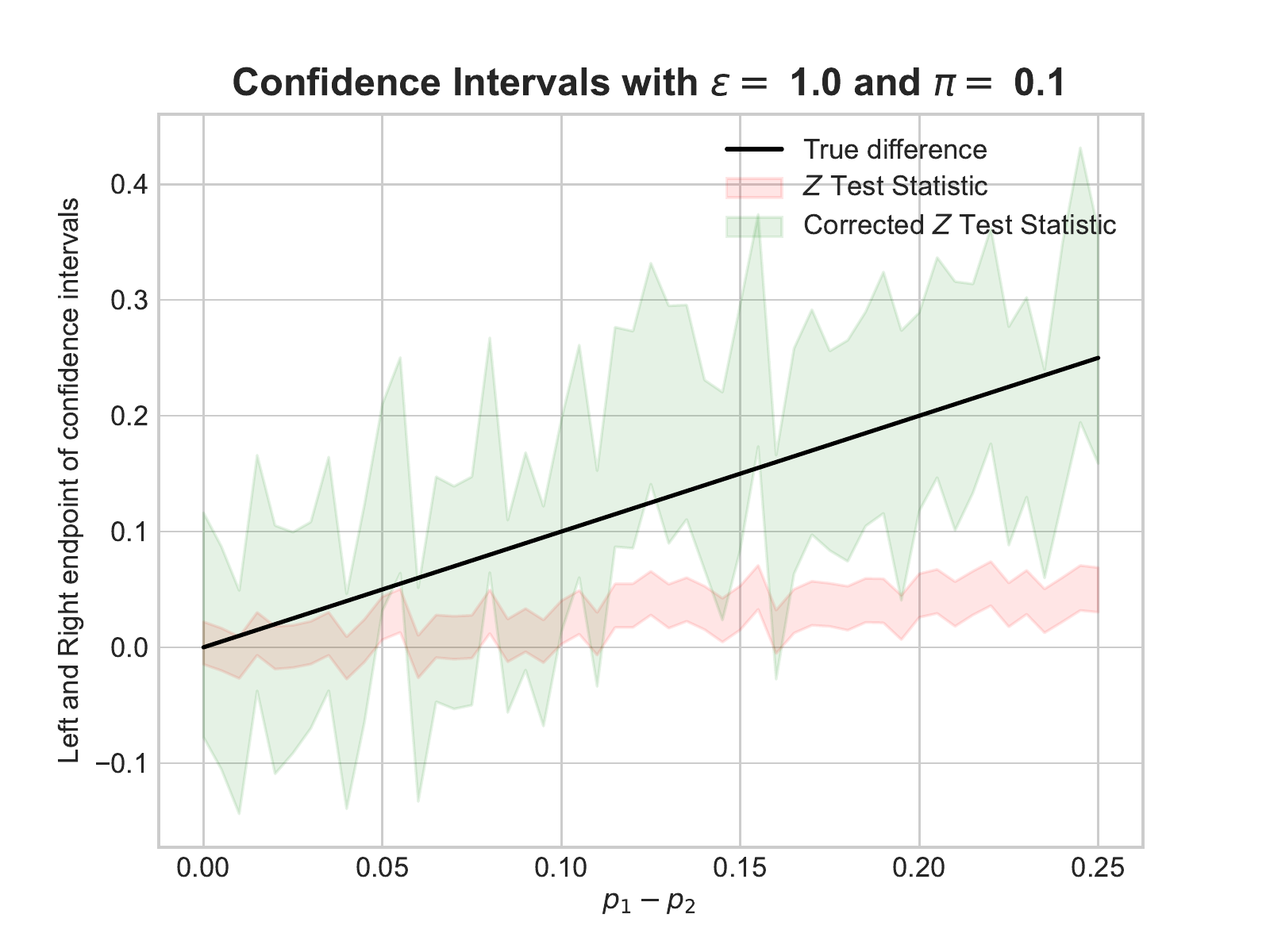}
    \includegraphics[width=0.46\linewidth]{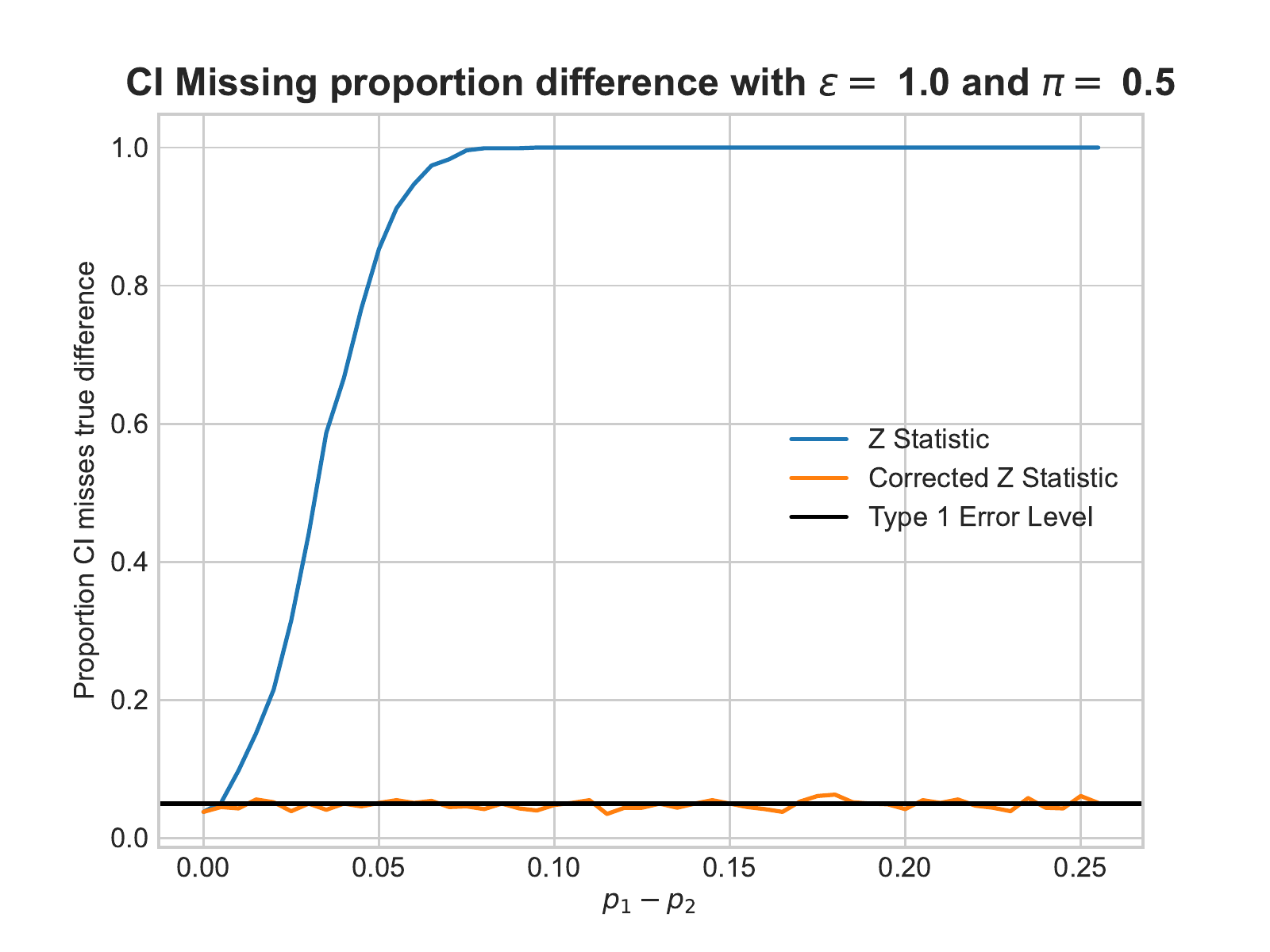}
      \includegraphics[width=0.46\linewidth]{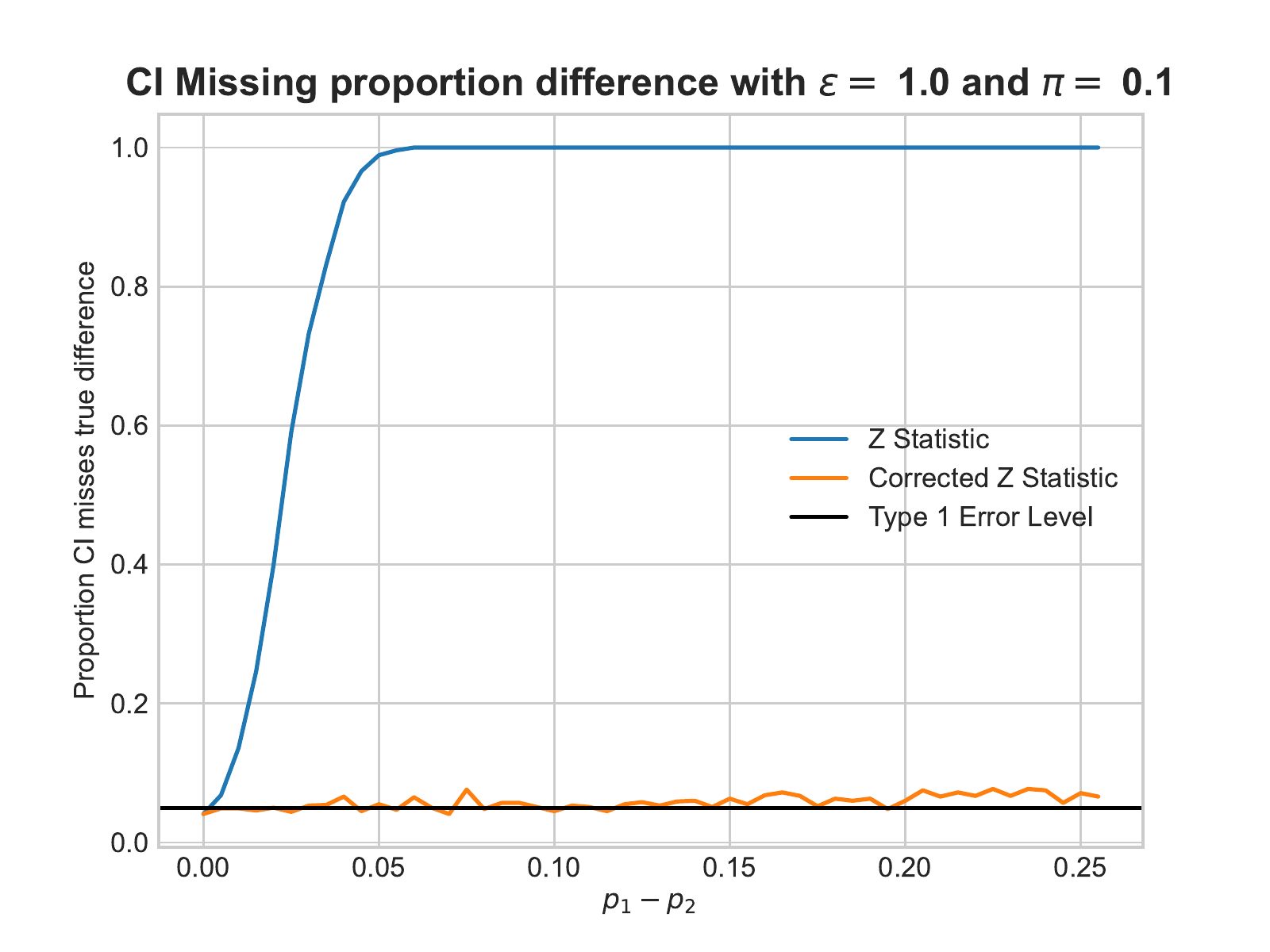}
  \caption{(Top Row) Confidence Intervals with $n = 10000$, $p_2 = 0.25$, and various $p_1 = p_2 + \Delta$ for $\diffp = 1.0$.  We show that correcting $\Delta$ to be $\Delta^\diffp$ in \eqref{eq:DeltaEpsilon} helps achieve valid confidence intervals for $\pi \in \{ 0.1, 0.5\}$. (Bottom Row) We use the same parameter settings as in the top row, but we compute the proportion of times the computed confidence interval actually overlaps with the true difference over 1000 independent trials. }
  \label{fig:ZTestWithCorrection}
\end{figure}
\end{center}

\subsection{$\chi^2$ test}
Given that many statistical packages use $\chi^2$ tests, rather than $Z$-tests, for testing the difference between two proportions, we then consider privatizing $\chi^2$-tests and further using these tests to compute valid confidence intervals.  There has been several works on privatizing $\chi^2$ tests, even in the more restrictive (fully) local DP setting \citep{GaboardiRo18, Sheffet18, AcharyaCaFrTy19}.  We then compare the relevant tests in the local DP setting with the tests in this less restrictive LGDP setting.  We will adopt the general minimum $\chi^2$ theory outlined earlier from \citet{KiferRo17}, which was also used to derive tests in the local DP setting by \citet{GaboardiRo18}.  

\subsubsection{Non-private test}
In Table~\ref{table:contingencyTable} we present the contingency table of outcomes, where Group $1$ has probability of success $p_1$ and Group $2$ has probability of success $p_2$.  We then want to test $\texttt{H}_0: p_1 = p_2 + \Delta$.     
\begin{table}[htbp]
\centering\setcellgapes{4pt}\makegapedcells
\begin{tabular}{ |c|c|c| } 
 \hline
 Outcome Probs & \shortstack{Group $1$ \\ w.p. $\pi$} & \shortstack{Group $2$ \\ w.p. $1-\pi$} \\ 
 \hline
Success & $Y[1,1]$ & $Y[1,2]$ \\ 
 \hline
Failure & $ Y[2,1]$ & $Y[2,2]$ \\ 
 \hline
\end{tabular}
\caption{Contingency Table giving outcome probabilities \label{table:contingencyTable}}
\end{table}

Hence, when we sample $n$ combined outcomes over groups $1$ and $2$, we can then consider a single multinomial random variable $Y = (Y[1,1], Y[1,2], Y[2,1], Y[2,2])^\intercal \sim \text{Multinom}(n, \vec{\theta}(\pi, p_1, p_2))$ where we flatten the outcome probabilities to an array
\[
\vec{\theta}(\pi, p_1, p_2) = \left( \underbrace{\pi p_1}_{\text{ Success in Group $1$}}, \underbrace{(1-\pi) p_2}_{\text{ Success in Group $2$}},  \underbrace{\pi (1-p_1)}_{\text{ Failure in Group $1$}}, \underbrace{(1-\pi)(1-p_2)}_{\text{ Failure in Group $2$}}\right)^\intercal.
\]

We use the $\chi^2$ statistic by providing an estimate for the parameters $p_1, p_2,$ and $\pi$, with $p_1 - p_2 = \Delta$ given.  Note that $Y$ can be written as the sum of i.i.d. random variables $\{ Y_i\}$, i.e. $Y = \sum_{i=1}Y_i$. The covariance matrix $C(\theta)$ for $Y_i$ is the following, where we write $\mathrm{Diag}(\theta)$ to be the diagonal matrix with entries on the diagonal as $\theta$.  
\begin{align}
&C(\pi, p_1. p_2) = \mathrm{Diag}\left( \vec{\theta}(\pi, p_1, p_2)\right) -  \vec{\theta}(\pi, p_1, p_2) \vec{\theta}(\pi, p_1, p_2)^\intercal
\label{eq:multinomCovar}
\end{align}
Note that the covariance matrix is singular and has the all one's vector in its null space.  It turns out that $\mathrm{Diag}\left(\vec{\theta}(\pi, p_1, p_2)\right)^{-1}$ is the generalized inverse for $C(\pi, p_1, p_2)$ \cite{Ferg96}, which we will use in our $\chi^2$ statistic.  We then use the estimates $\hat{p_1}, \hat{p_2}$ and $\hat{\pi}$ for $p_1, p_2,$ and $\pi$ respectively where
\begin{equation}
\hat{p}_2 = \frac{Y[1,1] + Y[1,2]}{n} - \Delta \hat{\pi}, \quad \hat{p}_1 = \hat{p}_2 + \Delta, \qquad \hat{\pi} = \frac{Y[1,1] + Y[2,1]}{n}
\label{eq:estimatePandPi}
\end{equation}

The $\chi^2$-statistic $\hat{D}$ then becomes the following,
\begin{equation}
\hat{D}(Y;\Delta) = n \cdot \left(Y/n - \vec{\theta}(\hat{\pi}, \hat{p}_1, \hat{p}_2)\right)^\intercal \mathrm{Diag}\left(\vec{\theta}(\hat{\pi}, \hat{p}_1, \hat{p}_2)\right)^{-1} \left( Y/n - \vec{\theta}(\hat{\pi}, \hat{p}_1, \hat{p}_2)\right)
\label{eq:chiSq}
\end{equation}
We then compare $\hat{D}(Y;\Delta)$ with a $\chi^2$ with $1$ degree of freedom, that is, if $\hat{D}(Y, \Delta) > \chi^2_{1,1-\alpha}$, then we reject $H_0: p_1 = p_2 + \Delta$ with significance level $1-\alpha$.  Note that this classical hypothesis test fits with the general $\chi^2$ test outlined in Section~\ref{sect:generalChi} as $Y$ actually has rank $3$, due to the all ones vector being in its null space,  and $\mathrm{Diag}\left(\vec{\theta}(\hat{\pi}, \hat{p}_1, \hat{p}_2)\right)^{-1}$ is the generalized inverse of the covariance matrix evaluated at the estimates given.  

One way to achieve valid confidence intervals for the difference $p_1 - p_2$ is to test for multiple values of $\Delta$ to see which intervals should be rejected under $\texttt{H}_0$.  That is, we search over the space $\Delta \in [-1,1]$, with some tolerance level $\tau$ (say $\tau = 0.001$), and check whether $\hat{D}(Y;\Delta) \leq \chi^2_{1,1-\alpha}$.  As we move from $\Delta = -1$, we will cross a point $\Delta = \Delta^L$ where $\hat{D}(Y; \Delta^L) > \chi^2_{1,1-\alpha}$, yet $\hat{D}(Y; \Delta^L+ \tau) \leq \chi^2_{1,1-\alpha}$.  This value $\Delta^L$ will be our left-end point of our confidence interval.  We then continue searching until we reach a point $\Delta = \Delta^R$ where  $\hat{D}(Y; \Delta^R- \tau) > \chi^2_{1,1-\alpha}$ yet $\hat{D}(Y; \Delta^R) > \chi^2_{1,1-\alpha}$.  This value $\Delta^R$ will be our right-end point of our confidence interval.  This simple grid search can also be replaced with a bisection root finding approach to the left and right of the $\Delta$ that minimizes the $\chi^2$ statistic.  We will use this method to compute confidence intervals with privatized groups in the following section and give very similar confidence intervals to the $Z$-test.

\subsubsection{Private Test}
We will then consider privatizing the group membership in the $\chi^2$ test for independence.   It will be helpful to write $Y = \sum_{i=1}^n Y_i$ in terms of each individual's data contribution.  Following the notation from above, we introduce random variables $W_i \sim \text{Bern}(\pi)$ and $X_i[g] \sim \text{Bern}(p_g)$ where
\[
Y_i = \left( W_i \cdot X_{i}[1], (1-W_i) \cdot X_{i}[2], W_i (1-X_{i}[1]), (1-W_i)(1-X_{i}[2]) \right)^\intercal.
\]

In order to privatize the group membership of each individual, we will use randomized response and privacy loss parameter $\diffp>0$.  There are other privatization mechanisms to consider, as we outline in Section~\ref{sect:privacyPrelim}, and we will consider these mechanisms when we increase the number of groups in the following section.  Recall that if a data point is in group $g \in \{1,2\}$, then it will remain in group $g$ with probability $\tfrac{e^{\diffp}}{1 + e^\diffp}$ and otherwise it will change to group $3-g$.  Let $Z^\diffp_{i}[g] \sim \text{Bern}\left( \tfrac{e^{\diffp}}{1 + e^\diffp}\right)$ where $i \in [n]$ and $g \in \{ 1,2 \}$, so we can write the privatized random vector as the following where we use $\mathbf{0}$ to be the $2\times 2$ matrix of zeros,
\begin{align*}
Y_i^\diffp & = 
\begin{pmatrix} Z^\diffp_{i}[1] \cdot W_i \cdot  X_{i}[1] + (1-Z^\diffp_{i}[2]) \cdot (1-W_i) \cdot X_{i}[2] \\  
			(1-Z^\diffp_{i}[1]) \cdot W_i \cdot  X_{i}[1] + Z^\diffp_{i,1}\cdot  (1-W_i) \cdot X_{i}[2] \\
			Z^\diffp_{i}[1] \cdot  W_i \cdot  (1-X_{i}[1]) + (1-Z^\diffp_{i,1}) \cdot (1-W_i)\cdot  (1-X_{i}[2]) \\
			(1-Z^\diffp_{i}[1]) \cdot W_i \cdot  (1-X_{i}[1]) + Z^\diffp_{i}[2]\cdot  (1-W_i)\cdot (1-X_{i}[2]) 
\end{pmatrix} \\
& =  \begin{bmatrix}
			Z_i^\diffp & \mathbf{0} \\
			\mathbf{0} & Z_i^\diffp
			\end{bmatrix} 
			Y_i, 		\quad \text{ where } Z_i^\diffp =  \begin{bmatrix} Z_i^\diffp[1] & 1 - Z_i^\diffp[2] \\ 1-Z_i^\diffp[1] & Z_i^\diffp[2] \end{bmatrix}.		
\end{align*}
simplifying to a block matrix. Observe that the first column of $Z_i^\diffp$ is exactly randomized response applied to group 1, while the second column is randomized response applied to group 2. 

In this case, we can again consider privatized data $Y^\diffp = (Y^\diffp[1,1], Y^\diffp[1,2], Y^\diffp[2,1], Y^\diffp[2,2])^\intercal = \sum_{i=1}^n Y_i^\diffp$ to be generated from $\text{Multinom}(n,\vec{\theta}^\diffp(\pi, p_1, p_2))$ where
\begin{align*}
\vec{\theta}^\diffp(\pi, p_1, p_2) = 
\begin{pmatrix}  
\tfrac{e^\diffp}{e^\diffp + 1} \left( \pi p_1\right) +  \tfrac{1}{e^\diffp + 1} \left( (1-\pi)p_2\right) \\
\tfrac{e^\diffp}{e^\diffp + 1} \left( (1-\pi) p_2 \right) +  \tfrac{1}{e^\diffp + 1} \left( \pi p_1\right)   \\
 \tfrac{e^\diffp}{e^\diffp + 1} \left( \pi (1-p_1)\right) +  \tfrac{1}{e^\diffp + 1} \left( (1-\pi)(1-p_2)\right) \\
\tfrac{e^\diffp}{e^\diffp + 1} \left( (1-\pi) (1-p_2) \right) +  \tfrac{1}{e^\diffp + 1} \left( \pi (1-p_1)\right)
\end{pmatrix}
\end{align*}
We then form similar estimates $\hat{\pi}$ and $\hat{p}_1, \hat{p}_2$ for $\pi$ and $p_1, p_2$ from \eqref{eq:estimatePandPi} under the null hypothesis $p_1 - p_2 = \Delta$ where instead we use outcomes $Y^\diffp$,
\begin{equation}
\hat{p}_2 = \frac{Y^\diffp[1,1] + Y^\diffp[2,1]}{n} -\hat{\pi} \Delta, \quad \hat{p}_1 = \hat{p}_2 + \Delta, \quad \hat{\pi} = \left(\frac{e^\diffp + 1}{e^\diffp - 1} \right) \left( \frac{Y[1,1]^\diffp + Y[1,2]^\diffp}{n} - \tfrac{1}{e^\diffp + 1} \right)
\label{eq:estimatePrivPandPi}
\end{equation}
We can then form the private $\chi^2$-statistic $\hat{D}^\diffp$ then becomes the following
\begin{equation}
\hat{D}^\diffp(\Delta) = n \cdot \min_{ \substack{p_1 - p_2 = \Delta \\\pi \in (0,1) } } \left\{ \left( Y^\diffp / n- \vec{\theta}^\diffp(\pi, p_1, p_2)\right)^\intercal \mathrm{Diag}\left(\vec{\theta}^\diffp( \hat{\pi}, \hat{p}_1, \hat{p}_2) \right)^{-1} \left(Y^\diffp/n - \vec{\theta}^\diffp(\pi, p_1, p_2)\right) \right\}
\label{eq:privChiSq}
\end{equation}

We can then plot power curves in Figure~\ref{fig:DiffProp1} for our the private test, testing with $\Delta = 0$ and increasing $p_1 - p_2 >0$ in the sample, and hence the test should reject $\texttt{H}_0$.  We then compare with the local DP $\chi^2$-test from \citet{GaboardiRo18}.  We show the number of trials that rejected $\texttt{H}_0: p_1 = p_2$ over $1000$ independent trials.  Note that we can drastically improve the statistical power of our tests for the same level of $\diffp$ in the less restrictive LGDP model compared with local DP tests.  

\begin{figure}
  \includegraphics[width=0.32\linewidth]{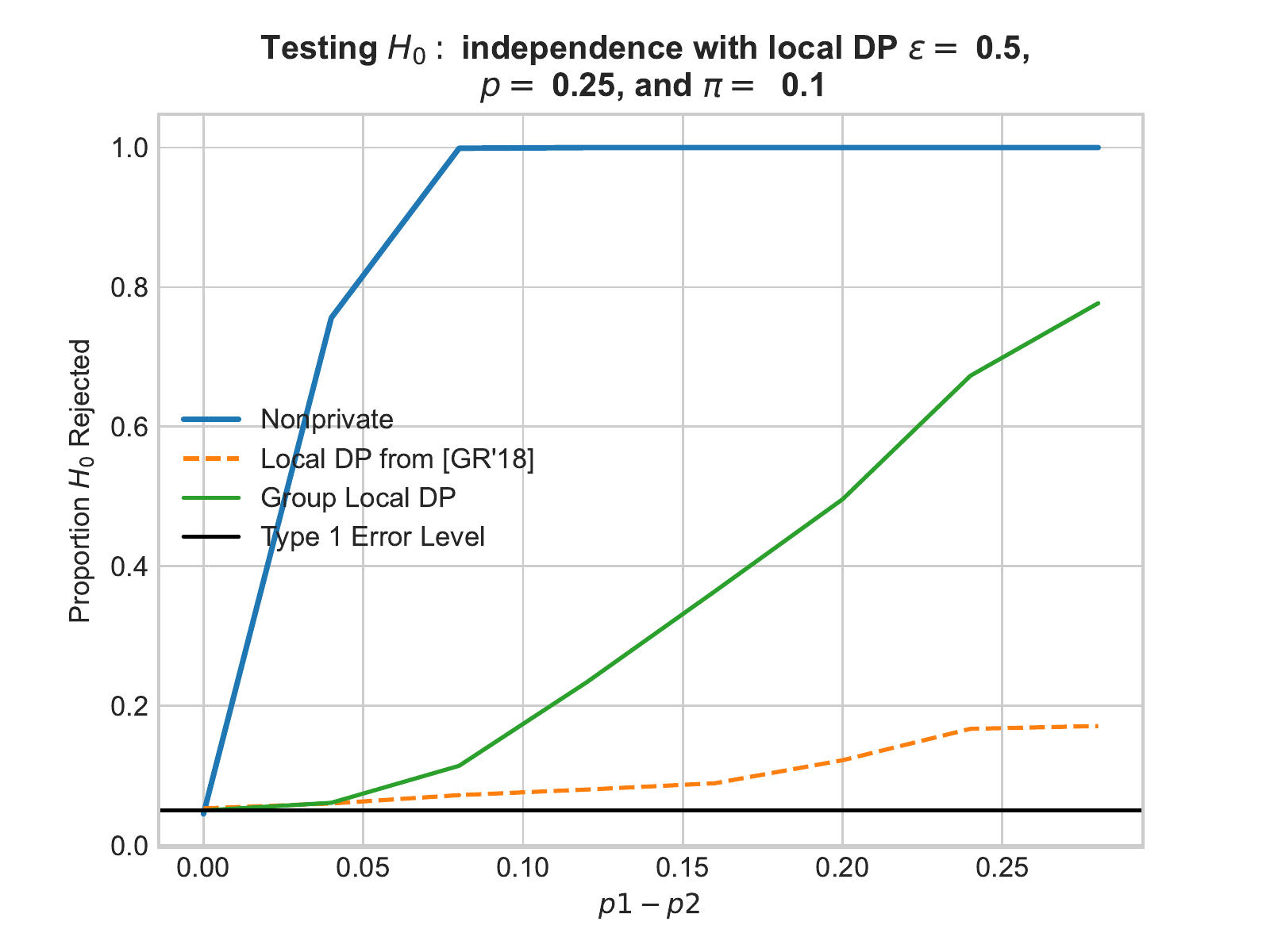}
  \includegraphics[width=0.32\linewidth]{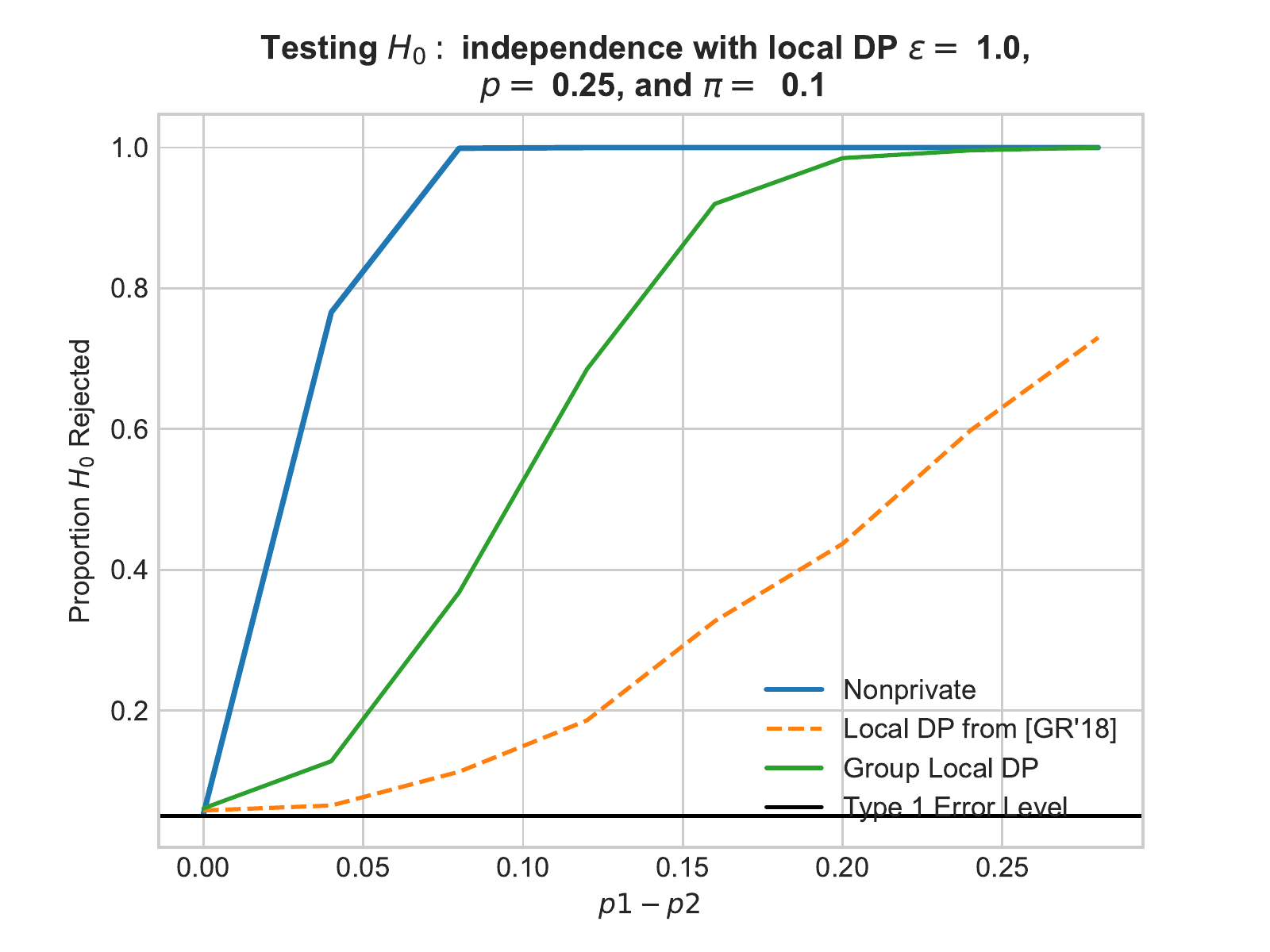}
  \includegraphics[width=0.32\linewidth]{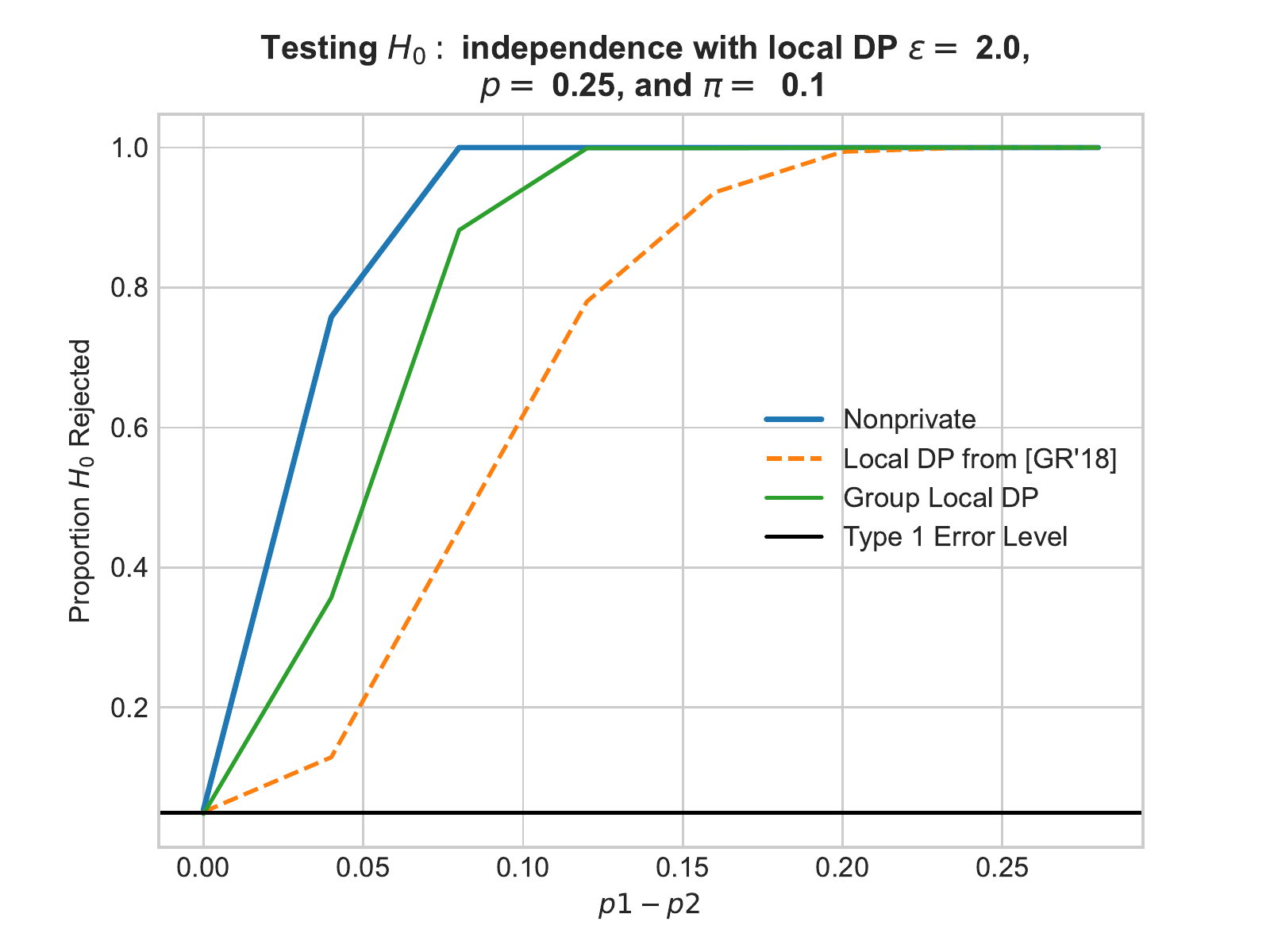}
  \caption{Comparing Group Local DP with (fully) LDP $\chi^2$ tests from \cite{GaboardiRo18} with the probability of being in Group $1$ being $\pi = 0.1$ and $\diffp \in \{0.5, 1,2 \}$ and $n = 10000$}
  \label{fig:DiffProp1}
\end{figure}

When testing for equal proportions across the 2 groups, we get $\vec{\theta}^\diffp(\pi, p, p) = \vec{\theta}(\pi^\diffp, p, p)$ where $\pi^\diffp = \tfrac{e^\diffp}{1+ e^\diffp} \pi + \tfrac{1}{1+e^\diffp} (1-\pi)$, which will result in essentially the same test as without privacy considerations, as the privatized vector will still be a multinomial just with a different group probability $\pi^\diffp$ as opposed to $\pi$.  Hence, we should reject the test under group probability $\pi$ if and only if we reject the test under group probability $\pi^\diffp$.  

\subsection{Confidence Intervals}\label{sect:CI_prop}

We have shown that the classical $Z$-test may not need modifying if we test $\texttt{H}_0: p_1 = p_2$, i.e., it still provides valid results. We can also use the $Z$-test with a correction on the difference $\Delta \to \Delta^\diffp$ from \eqref{eq:DeltaEpsilon} to compute valid confidence intervals, as in Figure~\ref{fig:DiffProp2}.  We also show that we can use the approach outlined above that uses the $\chi^2$-test to compute the end points of a confidence interval.  We then show the results in two cases, when $\pi = 1/2$ and when $\pi = 0.1$.  Note that there is not much difference between the confidence intervals using the $\chi^2$-test, and the $Z$-test with the correction factor. 

\begin{figure}
  \includegraphics[width=0.46\linewidth]{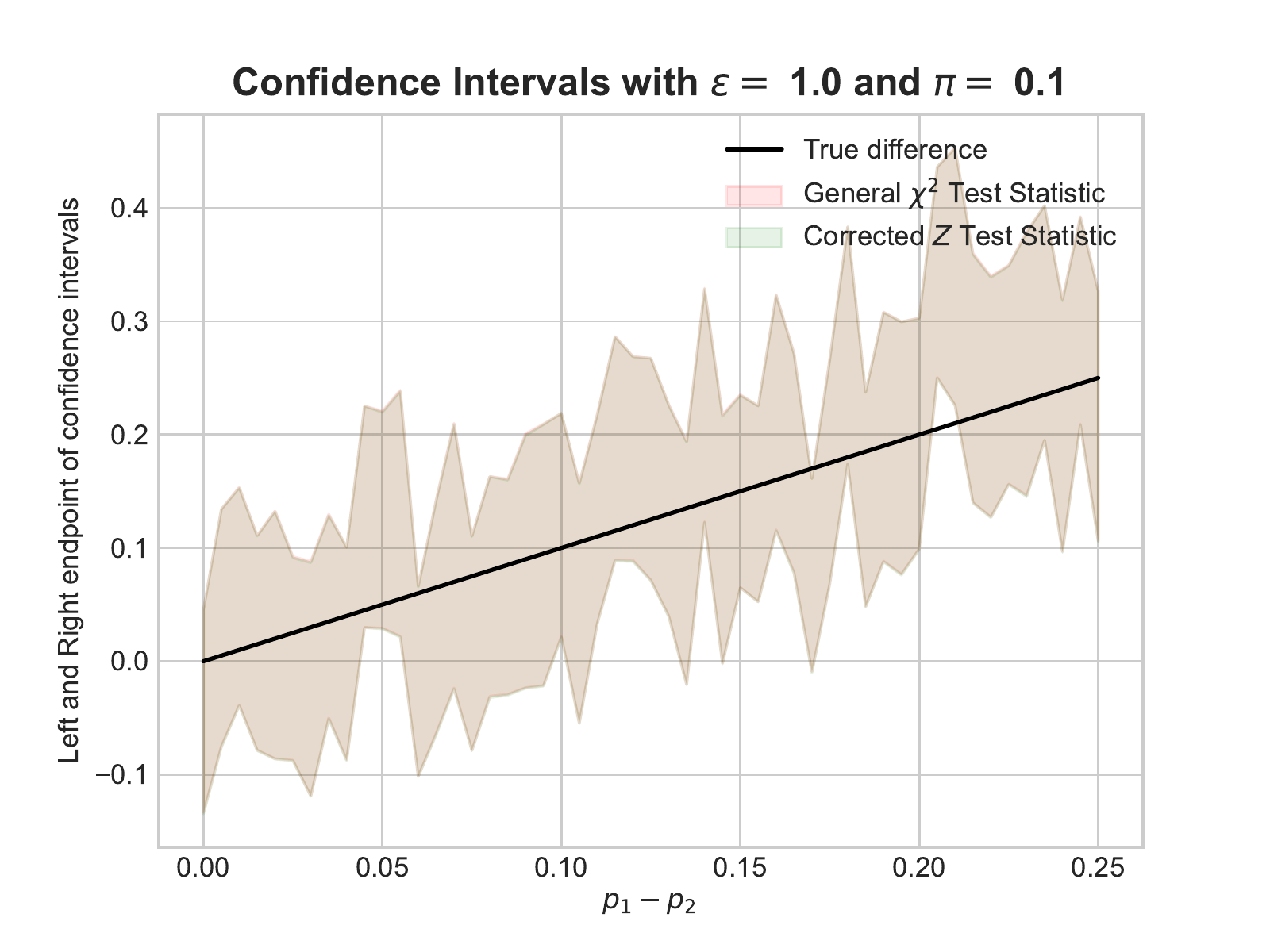}
  \includegraphics[width=0.46\linewidth]{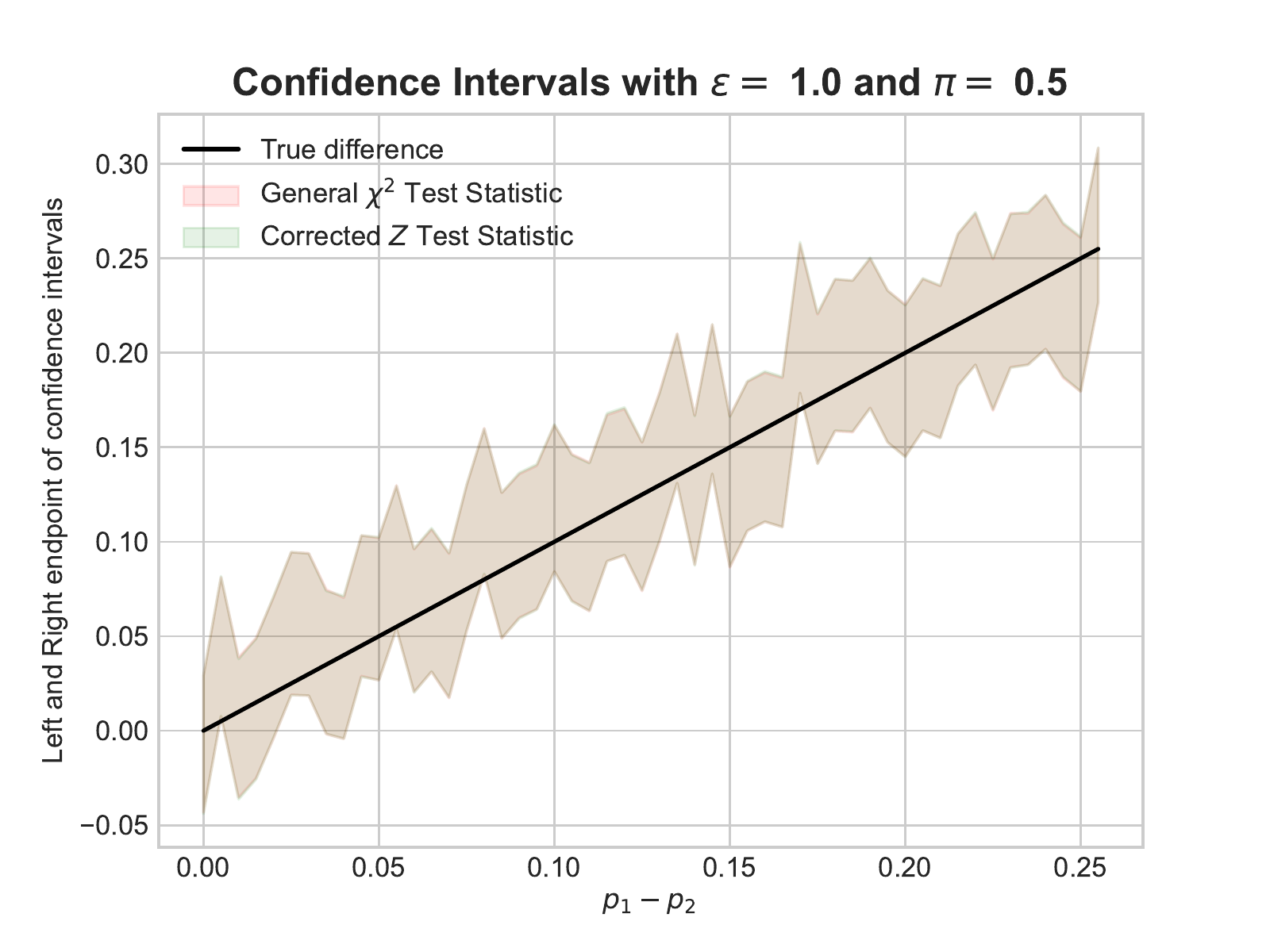}
  \caption{Comparing Confidence Intervals for Group Local DP between using the corrected $Z$-test and the $\chi^2$ tests with $p_1 = 0.25$ and changing $p_1 - p_2$.  The left plot uses group 1 probability $\pi = 0.10$ and the right plot uses $\pi = 0.5$.  Note that we are showing $95\%$ confidence intervals a single trial by generating data with different $p_1- p_2$.}
  \label{fig:DiffProp2}
\end{figure}

\subsection{Results on Adult Dataset}

We also present results on the UCI Adult dataset \cite{Adult}.  Since we are working with two sensitive groups, we will use gender as the sensitive group category, where a sample is either \emph{Male} or \emph{Female}.  We will test whether there is a significant difference in whether males or females make more that \$50k salary.  Using \texttt{adult.data}, we compute confidence intervals for the difference in proportion in salary above \$50k between Males and Females after using randomized response on the gender of each sample.  We then compute the (non-private) sample difference in proportion on the \texttt{adult.test} data.  We compare the traditional proportion test that ignores the additional noise due to privacy, the $Z$-test with the correction given in \eqref{eq:DeltaEpsilon}, and our general $\chi^2$ test in determining confidence intervals at the 95\% significance level.  We present in Figure~\ref{fig:adultDiffPropCI} how often the different methods miss the test data difference in proportion over 1000 trials for various $\epsilon$ parameters.  Note that the randomness in each trial is only over the randomness due to randomized response and the data remains fixed.  Because the non-private confidence interval on the training data does overlap the test data difference in proportion estimate, as $\epsilon$ increases the different methods will rarely miss the test data proportion difference.    

\begin{center}
\begin{figure}
\centering
  \includegraphics[width=0.46\linewidth]{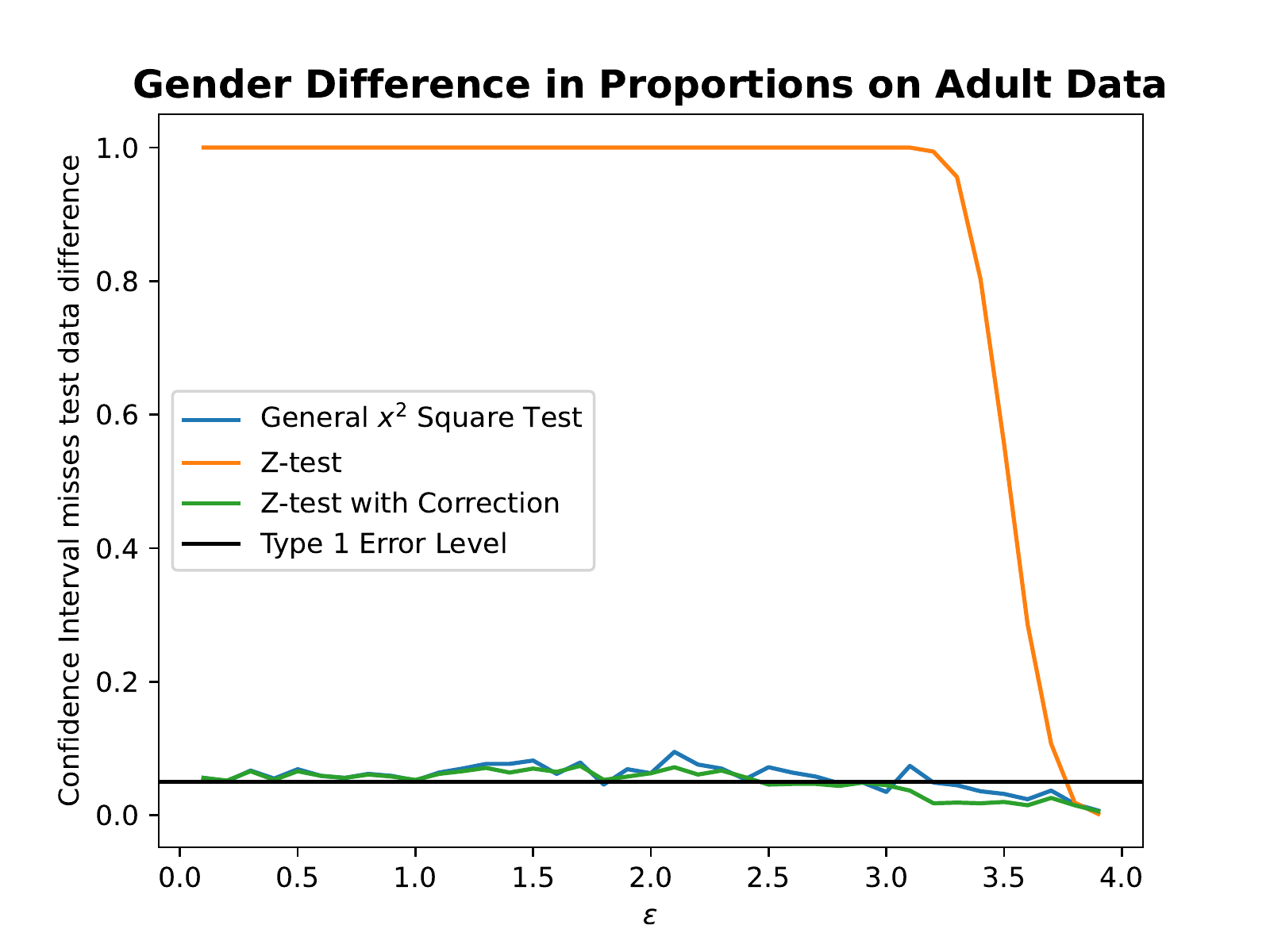}
  \caption{Proportion of times out of 1000 independent trials that the training data difference in proportion confidence interval misses the test data difference in proportion where we treat Gender as the sensitive group and the outcome as the binary outcome of whether a sample makes more than \$50k salary.}
  \label{fig:adultDiffPropCI}
\end{figure}
\end{center}

Thus far, we have detailed an approach to difference in proportions testing in the LGDP setting. First, in the case where we test $H_0: p_1 = p_2$, the classical test does not appear to need modification. Second, we consider testing $H_0: p_1 = p_2 + \Delta$. We show that a standard z-test will give extremely poor confidence intervals, and provide two alternatives: a z-test corrected for privacy, and a general $\chi^2$ test. The two seem to perform quite similarly. We compare the general $\chi^2$ test with analogous tests in the more restrictive local DP setting, and - unsurprisingly but encouragingly - are able to improve power in our less restrictive setting. We also show that the corrected z-test drastically outperforms the uncorrected z-test, correcting for the bias observed in uncorrected confidence intervals.

\section{Independence Testing with Categorical Data}\label{sect:independence}
We now consider testing whether the success probability is equal across several groups simultaneously.  A common test is to use the classical Pearson $\chi^2$ test for independence to see whether the outcome is independent of the group.  Given that there are multiple groups, rather than just 2, we will compare the three private mechanisms presented in Section~\ref{sect:privacyPrelim}.  
To design private $\chi^2$ tests for determining whether the success probability is the same across $g>2$ groups simultaneously, we follow the general $\chi^2$ test approach outlined in Section~\ref{sect:generalChi}, which was also used to design (fully) local DP $\chi^2$ tests in \cite{GaboardiRo18}. We will compare these tests with those we develop in the less restrictive private setting of LGDP.  

We first set up some notation.  Let $Y_i \sim \text{Multinomial}(1,\theta(\pi,p))$ be the data entry for individual $i$, where $p \in [0,1]$ is the success probability across all groups and $\pi \in [0,1]^g$ is the probability vector over all $g$ groups, so that $\sum_{i=1}^g\pi_i = 1$.  Note that the covariance matrix of $Y_i$ will still be of the form of the covariance matrix given in \eqref{eq:multinomCovar}.  Let $W_i \sim \text{Multinomial}(1,\pi)$, which will determine which group each sample belongs to, and let the outcome for a sample in group $j \in [g]$ be written as $X_{i}[j] \sim \text{Bern}(p_j)$.  Hence, we have the following random vector that we use in the $\chi^2$ statistic.
\begin{align*}
& Y_i = \left(\underbrace{W_{i}[1] \cdot X_{i}[1], W_{i}[2] \cdot X_{i}[2], \cdots, W_{i}[g] \cdot X_{i}[g]}_{\text{successes}}, \right. \\
& \qquad\qquad\qquad \left. \underbrace{W_{i}[1] \cdot(1-X_{i}[1]), W_{i}[2] \cdot(1-X_{i}[2]), \cdots, W_{i}[g] \cdot(1-X_{i}[g]}_{\text{failures}} )\right)^\intercal.
\end{align*}

To match the contingency table format we had in the previous section, we will write the coordinates of $Y_i =  (Y_i[1,1], Y_i[1,2], \cdots, Y_i[1,g], Y_i[2,1], \cdots, Y_i[2,g])$, so that the entries whose first index is 1 are the successes and the entries whose first index is 2 are the failures. Once we use a privatization mechanism $M$, whether it be $g$-randomized response, bit flipping, or the subset mechanism, the resulting outcome will be $Y_i^\diffp$, which will consist of some successes/failures from group $j$ being potentially replicated across various groups, and perhaps removed from group $j$.  We then receive $n$ i.i.d. samples from the distribution of $Y_i^\diffp$ to obtain counts of successes and failures in each group $Y^\diffp = \sum_{i=1}^n Y_i^\diffp$.  To make the randomness in the privacy mechanism explicit, we will write the mechanism as a matrix of noise terms multiplied by the original $Y_i$.  With each mechanism $M(W_i) \in \{ 0,1\}^g$, we write the random matrix $Z_i^\diffp \in \{0,1 \}^{g\times g}$ where column $j$ is the corresponding random entries for $M(j)$.  We can then succinctly write $Y_i^\diffp$ in the following way where $\mathbf{0}$ is a $g$ by $g$ matrix of zeros:
\begin{equation}
Y_i^\diffp=  \begin{bmatrix}
			Z_i^\diffp & \mathbf{0} \\
			\mathbf{0} & Z_i^\diffp
			\end{bmatrix} Y_i
\label{eq:privY}
\end{equation}

Here we can distinguish our work from the local DP setting considered in \citet{GaboardiRo18}.  In particular, the matrix multiplying $Y_i$ in the (fully) local DP setting would include terms where there is a zero block submatrix.  This would correspond to successes within group $j$ being able to switch to a failure in another group $j'$.  In our setting, we are not privatizing outcomes, i.e. successes or failures, hence the zero block matrices.  

With our general set up, we can then compare the various privacy mechanisms for various levels of $\diffp$.  
That is, given privatized data $Y_i^\diffp$ for $i \in [n]$ along with the privacy level $\diffp$ and the mechanism $M$ used, we then form the $\chi^2$ statistic $\hat{D}$ from \eqref{eq:privChiSq} and compare with the critical value $\chi^2_{g-1, 1-\alpha}$ for $1-\alpha$ level of significance.  

\subsection{Randomized Response}
For $g$-randomized response, the matrix $Z^\diffp_i$ will have column $j$ be a multinomial of a single trial with probability vector that is $\tfrac{e^\diffp}{e^\diffp + g-1}$ in position $j$ and $\tfrac{1}{e^\diffp + g - 1}$ in every other coordinate.  Note that the resulting privatized data $Y^\diffp $ will still follow a multinomial distribution, where the probability vector is the following
\[ 
\vec{\theta}^\diffp(\pi, p) = \begin{pmatrix}
\frac{p}{e^\diffp + g - 1} \begin{bmatrix}
e^\diffp & 1 & \cdots & 1 \\
1 & e^\diffp & \cdots & 1 \\
 &  & \ddots &  \\
1 & 1 & \cdots & e^\diffp 
	\end{bmatrix} \pi \\	
\\	
\frac{1-p}{e^\diffp + g -1} \begin{bmatrix}
e^\diffp & 1 & \cdots & 1 \\
1 & e^\diffp & \cdots & 1 \\
 &  & \ddots &  \\
1 & 1 & \cdots & e^\diffp 
	\end{bmatrix} \pi \\
\end{pmatrix}
\]
Note that because the resulting vector $Y^\diffp$ is still a multinomial, we do not need to write out the covariance matrix in order to calculate the general $\chi^2$ statistic, but rather the generalized inverse is the diagonal matrix whose entries are the inverse of the entries in $\vec{\theta}^\diffp(\pi, p)$.  Note that we have $\vec{\theta}^\diffp(\pi, p) = \vec{\theta}(\pi^\diffp,p)$, where $\pi^\diffp = \left( \tfrac{e^\diffp}{e^\diffp+g-1} \pi_j + \tfrac{1}{e^\diffp + g-1} (1-\pi_j) : j \in [g]\right)$, so we can use the same test statistic as in the non-private case, but with different group probabilities that can be estimated from the privatized samples.  We then need to find estimates for $p$ and $\pi^\diffp$ based on the data $Y^\diffp$.  We then use the generalized version of the estimate provided in \eqref{eq:estimatePandPi} with $\Delta = 0$.  
\[
\hat{p} = \frac{\sum_{j \in [g]}Y^\diffp[1,j]}{n}, \qquad \hat{\pi}^\diffp = \left( \hat{\pi}_j^\diffp = \frac{Y^\diffp[1,j] + Y^\diffp[2,j]}{n} : j \in [g] \right).
\]
We can then form the $\chi^2$ statistic $\hat{D}$ as in \eqref{eq:chiSqStat}, where $C(\hat{\theta}_n)^\dagger$ is the diagonal matrix whose entries are the inverse of $\vec{\theta}(\hat{\pi}^\diffp, \hat{p})$, which we write as $\text{Diag}(\vec{\theta}(\hat{p}, \hat{\pi}^\diffp))^{-1}$
\[
\hat{D} = \min_{\substack{p \in (0,1), \pi \in [0,1]^g \\ \text{s.t. } \sum_{j=1
}^g \pi_j = 1 }}\left\{ n \left( Y^\diffp/n - \vec{\theta}(\pi, p)  \right)^\intercal \text{Diag}(\vec{\theta}( \hat{\pi}^\diffp,\hat{p}))^{-1} \left( Y^\diffp/n - \vec{\theta}(\pi, p)\right) \right\}
\]
We then compare $\hat{D}$ with a $\chi^2$ distribution with $g-1$ degrees of freedom, as in the non-private test, because the rank of the covariance matrix is at most $2g-1$ and we are minimizing over $g$ variables ($p$ and $\pi_1, \cdots, \pi_{g-1}$ since $\pi_g = 1 - \sum_{j=1}^{g-1}\pi_{j}$).
\subsection{Bit Flipping}\label{sect:BitFlipBinaryOutcomes}
The bit flipping mechanism will result in a random vector $Y_i^\diffp$ that is not a multinomial, so a little more care will be needed in computing the general $\chi^2$ statistic.
The matrix of noise terms in \eqref{eq:privY} for the bit flipping mechanism will consist of the following entries,
\[
Z^\diffp_i[j,j] \sim \text{Bern}\left(\tfrac{e^{\diffp/2}}{ e^{\diffp/2} + 1} \right), \quad j \in [g] \quad  \text{ and } \quad Z^\diffp_i[j,\ell] \sim \text{Bern}\left(\tfrac{1}{ e^{\diffp/2} + 1} \right), \quad j \neq \ell.
\]

We will first compute $E[Y_i^\diffp] = \vec{\theta}^\diffp(\pi, p)$ when we privatize the group with the bit flipping mechanism
\[
\vec{\theta}^\diffp(\pi, p) = 
\begin{pmatrix}
\frac{p}{e^{\diffp/2} +1} \begin{bmatrix}
e^{\diffp/2} & 1 & \cdots & 1 \\
1 & e^{\diffp/2} & \cdots & 1 \\
\vdots & \vdots & \ddots & \vdots \\
1 & 1 & \cdots & e^{\diffp/2} 
	\end{bmatrix} \pi \\ \\
	\frac{1-p}{e^{\diffp/2} +1} \begin{bmatrix}
e^{\diffp/2} & 1 & \cdots & 1 \\
1 & e^{\diffp/2} & \cdots & 1 \\
\vdots  & \vdots & \ddots & \vdots  \\
1 & 1 & \cdots & e^{\diffp/2} 
	\end{bmatrix} \pi
\end{pmatrix}.
\]
We now compute the covariance matrix, $C(\pi,p;\diffp) = \E[Y_i^\diffp \  (Y_i^\diffp)^\intercal] - \vec{\theta}^\diffp(\pi, p) \vec{\theta}^\diffp(\pi, p)^\intercal$
\[
\E[Y_i^\diffp[1,j]^2]= \frac{p}{e^{\diffp/2}+1} (\pi_j e^{\diffp/2} + (1-\pi_j)), \quad \E[Y_i^\diffp[2,j]^2] = \frac{1-p}{e^{\diffp/2}+1} (\pi_j e^{\diffp/2} + (1-\pi_j)), \qquad \forall j \in [g].
\]
\begin{align*}
\E[Y_i^\diffp[1,j] \cdot Y_i^\diffp[1,\ell]] & = \frac{p}{(e^{\diffp/2} + 1)^2} \left( e^{\diffp/2} (\pi_j + \pi_\ell) + (1 - \pi_j - \pi_\ell ) \right), \qquad &j, \ell \in [g], j \neq \ell
\\
\E[Y_i^\diffp[2,j] \cdot Y_i^\diffp[2,\ell]] &= \frac{1-p}{(e^{\diffp/2} + 1)^2} \left( e^{\diffp/2} (\pi_j + \pi_\ell) + (1 - \pi_j - \pi_\ell ) \right), \qquad & j, \ell \in [g], j \neq \ell
\\
\E[Y_i^\diffp[1,j] \cdot Y_i^\diffp[2,\ell]] &= \E[Y_i^\diffp[2,j] \cdot Y_i^\diffp[1,\ell] ] = 0 & j, \ell \in [g], j \neq \ell
\end{align*}

Unlike the case for $g$-randomized response, the all ones vector is not in the null space.  \citet{GaboardiRo18} showed that with the bit flipping algorithm in the local DP setting, the all ones vector is an eigenvector, whose eigenvalue depends solely on the privacy loss parameter $\diffp$.  Using a technique from \cite{KiferRo17}, they showed how one can project out this eigenvector and the resulting $\chi^2$ statistic will have one fewer degree of freedom (asymptotically).  We will not be able to do a similar technique here in the LGDP setting, since the all ones vector is not an eigenvector for general $\pi, p$.  Hence, we will not be able to reduce the degrees of freedom in its asymptotic distribution, at least with similar techniques although it might be possible another way.  

Next, we need to compute estimates for $p$ and $\pi$.
\[
\hat{p} = \frac{(e^{\diffp/2}+1)\sum_{j=1}^g Y^\diffp[j]}{n \left(e^{\diffp/2} + g-1\right)} ,\qquad \hat{\pi} = \left( \frac{ \frac{Y^\diffp[j] + Y^\diffp[g+j]}{n} - \frac{1}{e^{\diffp/2}+1} }{\frac{2e^{\diffp/2}}{e^{\diffp/2} + 1}-1} : j \in [g]\right) . 
\]
We then plug in our estimates to the covariance matrix and form our $\chi^2$ statistic, as in \eqref{eq:chiSqStat}, and compare it to a $\chi^2$ distribution with $2g - g = g$ degrees of freedom (one larger than the non-private version due to the covariance matrix being non-singular).

\subsection{Subset Mechanism}

The subset mechanism \cite{YeBa17} from Definition~\ref{defn:subset} can also be used to privatize the group membership of each sample $i$, which takes an additional parameter $k < g$.  Column $j$ of $Z_i^\diffp$ will correspond to the outcome of the subset mechanism $M(j)$.  That is. $Z_i^\diffp[j,j] \sim \text{Bern}(\tfrac{k e^\diffp}{k e^\diffp + g - k})$, and then the other coordinates $Z_i^\diffp[\ell,j]$ for $\ell \neq j$ will depend on the realization of $Z_i^\diffp[j,j] $.  So if $Z_i^\diffp[j,j]=1$, then $\left(Z_i^\diffp[\ell,j] : \ell \neq j \right)$ will sample $k-1$  ones uniformly at random without replacement, while if $Z_i^\diffp[j,j]=0$, then $\left(Z_i^\diffp[\ell,j] : \ell \neq j \right)$ will sample $k$  ones uniformly at random without replacement.  Following our framework, we first compute $\vec{\theta}^\diffp(\pi, p; k) = \E[Y_i^\diffp]$ when we use the subset mechanism.
\[
\vec{\theta}^\diffp(\pi, p;k) = 
\begin{pmatrix}
\frac{p}{{g-1\choose k-1} e^\diffp + {g-1\choose k} } 
\begin{bmatrix}
{g-1 \choose k-1}e^\diffp & \left( {g-2 \choose k-2}e^\diffp + {g-2\choose k-1} \right) & \cdots & \left( {g-2 \choose k-2}e^\diffp + {g-2\choose k-1} \right) \\
\left( {g-2 \choose k-2}e^\diffp + {g-2\choose k-1} \right) & {g-1 \choose k-1}e^\diffp & \cdots & \left( {g-2 \choose k-2}e^\diffp + {g-2\choose k-1} \right) \\
\vdots & \vdots & \ddots & \vdots \\
\left( {g-2 \choose k-2}e^\diffp + {g-2\choose k-1} \right) & \left( {g-2 \choose k-2}e^\diffp + {g-2\choose k-1} \right) & \cdots & {g-1 \choose k-1}e^\diffp
	\end{bmatrix} \pi \\ \\
	\frac{1-p}{{g-1\choose k-1} e^\diffp + {g-1\choose k} } 
	\begin{bmatrix}
{g-1 \choose k-1}e^\diffp & \left( {g-2 \choose k-2}e^\diffp + {g-2\choose k-1} \right) & \cdots & \left( {g-2 \choose k-2}e^\diffp + {g-2\choose k-1} \right) \\
\left( {g-2 \choose k-2}e^\diffp + {g-2\choose k-1} \right) & {g-1 \choose k-1}e^\diffp & \cdots & \left( {g-2 \choose k-2}e^\diffp + {g-2\choose k-1} \right) \\
\vdots & \vdots & \ddots & \vdots \\
\left( {g-2 \choose k-2}e^\diffp + {g-2\choose k-1} \right) & \left( {g-2 \choose k-2}e^\diffp + {g-2\choose k-1} \right) & \cdots & {g-1 \choose k-1}e^\diffp
	\end{bmatrix} \pi
\end{pmatrix}
\]

We then compute the covariance matrix $C(\pi,p;\diffp, k) = \E[Y_i^\diffp (Y_i^\diffp)^\intercal] - \vec{\theta}^\diffp(\pi, p) \vec{\theta}^\diffp(\pi, p)^\intercal$
\begin{align*}
\E[Y_i^\diffp[1,j]^2] & = p \frac{{g-1 \choose k-1}e^\diffp\pi_j + \left( {g-2 \choose k-2}e^\diffp  + {g-2 \choose k-1}\right) (1-\pi_j)}{{g-1\choose k-1} e^\diffp + {g-1\choose k} } , & \qquad \forall j \in [g].\\
\E[Y_i^\diffp[2,j]^2] &= (1-p) \frac{{g-1 \choose k-1}e^\diffp\pi_j + \left( {g-2 \choose k-2}e^\diffp  + {g-2 \choose k-1}\right) (1-\pi_j)}{{g-1\choose k-1} e^\diffp + {g-1\choose k} } , & \qquad \forall j \in [g].
\end{align*}
Now for $\ell \neq j$ and $j, \ell \in [g]$,
\begin{align*}
\E[Y_i^\diffp[1,j] \cdot Y_i^\diffp[1,\ell]] & =\frac{p}{{g-1\choose k-1} e^\diffp + {g-1\choose k} }  \left( e^{\diffp} {g-2\choose k-2}(\pi_j +\pi_\ell) + \left(e^\diffp {g-3\choose k-3} + {g-3\choose k-2} \right) (1-\pi_j - \pi_\ell) \right)
\\
\E[Y_i^\diffp[2,j] \cdot Y_i^\diffp[2, \ell]] &= \frac{1-p}{p} \E[Y_i^\diffp[1,j] Y_i^\diffp[1,\ell]] ,
\\
\E[Y_i^\diffp[1,j] \cdot Y_i^\diffp[2,\ell]] &= \E[Y_i^\diffp[2,j] Y_i^\diffp[1,\ell] ] = 0 
\end{align*}
We then consider the rank of this covariance matrix.  Note that the all ones vector is in the null space of the covariance matrix.  
\begin{lemma}\label{lem:singularCovar}
The covariance matrix $C(\pi,p; \diffp, k)$ corresponding to the subset mechanism has the all ones vector $\mathbf{1} \in \R^{2g}$ in its null space, i.e.
\[
C(\pi,p; \diffp, k) \mathbf{1} = \mathbf{0}
\]
\end{lemma}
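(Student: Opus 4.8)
The plan is to avoid summing the explicit covariance entries listed above and instead exploit a single structural feature of the subset mechanism: every output is a binary vector with exactly $k$ ones. Concretely, for each column $j$ of the noise matrix $Z_i^\diffp$, the defining construction (set $Z_i^\diffp[j,j]$ by a Bernoulli draw and then fill in either $k-1$ or $k$ additional ones among the remaining coordinates so that the total is always $k$) guarantees $\sum_{\ell=1}^g Z_i^\diffp[\ell,j] = k$ for every $j$, regardless of the realization of $Z_i^\diffp[j,j]$. In matrix form this says $\mathbf{1}_g^\intercal Z_i^\diffp = k\,\mathbf{1}_g^\intercal$, where $\mathbf{1}_g$ denotes the all-ones vector in $\R^g$.

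First I would rewrite $Y_i^\diffp$ using its block structure from \eqref{eq:privY}. Writing $Y_i = (S_i^\intercal, F_i^\intercal)^\intercal$ with $S_i$ the success block and $F_i$ the failure block, note that for each individual exactly one group is active and its outcome is either a success or a failure, so $S_i + F_i = W_i$, the one-hot group indicator with $\mathbf{1}_g^\intercal W_i = 1$. Then, using the block form of \eqref{eq:privY},
\[
\mathbf{1}^\intercal Y_i^\diffp = \mathbf{1}_g^\intercal Z_i^\diffp S_i + \mathbf{1}_g^\intercal Z_i^\diffp F_i = \mathbf{1}_g^\intercal Z_i^\diffp (S_i + F_i) = k\,\mathbf{1}_g^\intercal W_i = k .
\]
Thus $\mathbf{1}^\intercal Y_i^\diffp$ equals the deterministic constant $k$ on every realization.

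From here the conclusion is immediate in two equivalent ways. Since $\vec{\theta}^\diffp = \E[Y_i^\diffp]$, we have $(\vec{\theta}^\diffp)^\intercal \mathbf{1} = \E[(Y_i^\diffp)^\intercal \mathbf{1}] = k$, and therefore
\[
C \mathbf{1} = \E\!\left[ Y_i^\diffp\, (Y_i^\diffp)^\intercal \mathbf{1}\right] - \vec{\theta}^\diffp\, (\vec{\theta}^\diffp)^\intercal \mathbf{1} = k\,\E[Y_i^\diffp] - k\,\vec{\theta}^\diffp = \mathbf{0}.
\]
Equivalently, $\mathbf{1}^\intercal C \mathbf{1} = \mathrm{Var}(\mathbf{1}^\intercal Y_i^\diffp) = \mathrm{Var}(k) = 0$, and since $C$ is positive semidefinite this alone forces $C\mathbf{1} = \mathbf{0}$.

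I expect the only real care-point to be verifying the column-sum claim across the case split in the subset mechanism's definition, and confirming the block identity $S_i + F_i = W_i$ against the coordinate ordering used in \eqref{eq:privY}; both are routine. One could instead prove the lemma by directly summing each row of the displayed covariance entries and checking that the totals cancel, but that computation is considerably more tedious and obscures \emph{why} the statement holds, namely the fixed number of nonzero entries in every output of the mechanism.
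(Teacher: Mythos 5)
Your proof is correct, and it takes a genuinely different route from the paper's. The paper proves the lemma by brute force: it sums the explicit second-moment entries row by row, invokes the binomial identities $x\binom{x-1}{y-1} = y\binom{x}{y}$ and $x\binom{x-1}{y} = (y+1)\binom{x}{y+1}$ to collapse the sums, and arrives at $\E[Y_i^\diffp (Y_i^\diffp)^\intercal]\mathbf{1} = k\,\vec{\theta}^\diffp$ and $(\vec{\theta}^\diffp)^\intercal \mathbf{1} = k$, from which $C\mathbf{1} = \mathbf{0}$ follows. You obtain those same two identities without any entry-level computation, by isolating the structural reason they hold: every output of the subset mechanism lies in $\cY_k$, so each column of $Z_i^\diffp$ sums to $k$, and since $S_i + F_i = W_i$ is one-hot, $\mathbf{1}^\intercal Y_i^\diffp = k$ holds deterministically on every realization; a covariance matrix annihilates any direction along which the random vector is almost surely constant. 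Your verification of the two care-points (the column-sum invariant across the case split in Definition~\ref{defn:subset}, and $S_i+F_i=W_i$ against the coordinate ordering in \eqref{eq:privY}) is accurate. What each approach buys: the paper's computation doubles as a check on the explicit covariance entries it has just displayed (and which the test statistic needs anyway), whereas your argument is shorter, explains \emph{why} the degenerate direction exists, and generalizes immediately --- for instance, it gives the paper's later lemma for the ANOVA setting (null vector $(1,\dots,1,0,\dots,0)^\intercal$) for free, since $\mathbf{1}_g^\intercal Z_i^\diffp W_i = k$ deterministically there as well, and more broadly it shows that any local randomizer with a fixed number of ones in its output induces this rank deficiency regardless of $\pi$, $p$, or $\diffp$. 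Your closing observation that $\mathbf{1}^\intercal C \mathbf{1} = 0$ plus positive semidefiniteness already forces $C\mathbf{1} = \mathbf{0}$ is also a valid alternative ending, though the direct computation of $C\mathbf{1}$ you give first is just as short.
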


\begin{proof}
Consider element $j \in [g]$ in $ \E[Y_i^\diffp (Y_i^\diffp)^\intercal] \mathbf{1}$, where we ignore the coefficient $ \frac{p}{{g-1\choose k-1} e^\diffp + {g-1\choose k} }$,
\begin{align*}
&  \pi_j \left(e^\diffp {g-1 \choose k-1 } + {g-1 \choose k }  \right) + (1-\pi_j) \left( e^\diffp {g-2 \choose k-2} +{g-2 \choose k-1} \right) \\
 & \qquad\qquad  +  \sum_{j \neq \ell} \left( (\pi_j + \pi_\ell)  e^\diffp {g-2 \choose k-2 }  + (1-\pi_j - \pi_\ell) \left( e^\diffp {g-3 \choose k-3} +{g-3 \choose k-2}\right) \right)  \\ 
 & = \pi_j e^\diffp \left(  {g-1 \choose k-1 } + (g-1)  {g-2 \choose k-2 } \right) + (1-\pi_j) e^\diffp \left(  2 {g-2 \choose k-2 } + (g-2)  {g-3 \choose k-3 }\right) \\
 & \qquad \qquad+ (1-\pi_j) \left( {g-2 \choose k-1 } + (g-2){g- 3 \choose k-2 }\right)
\end{align*}
We then use the following properties
\[
x {x-1 \choose y-1} = y {x\choose y}, \qquad x {x-1 \choose y} = (y +1) {x \choose y+1}
\]
Hence, after simplifying, we get
\begin{align*}
E[Y_i^\diffp (Y_i^\diffp)^\intercal] \mathbf{1} & =\frac{1}{{g-1\choose k-1} e^\diffp + {g-1\choose k} } \left[ \pi_j e^\diffp k {g-1\choose k-1} + (1-\pi_j) \left(e^\diffp k{g-2\choose k-2} + k{g-2\choose k-1} \right)  \right]  \\
& = k \vec{\theta}^\diffp(\pi,p;k)
\end{align*}
Furthermore, we have
\[
\vec{\theta}^\diffp(\pi,p;k) ^\intercal \mathbf{1} = k.
\]
Putting everything together, we have
\[
C(\pi,p; \diffp, k) \mathbf{1} = \left( \E[Y_i^\diffp (Y_i^\diffp)^\intercal] - \vec{\theta}^\diffp(\pi, p) \vec{\theta}^\diffp(\pi, p)^\intercal \right) \mathbf{1} =  k \vec{\theta}^\diffp(\pi,p;k) - k \vec{\theta}^\diffp(\pi,p;k) = \mathbf{0}.
\]

\end{proof}

Next, we compute estimates for $p$ and $\pi$ based on the sample $Y^\diffp$ as well as $\diffp$ and $k$.
\begin{align*}
\hat{p} &= 
\frac{\left( {g-1\choose k-1} e^\diffp + {g-1\choose k} \right) \cdot \sum_{j=1}^g Y[1,j]^\diffp}{n \left( { g-1 \choose k-1} e^ \diffp + (g-1) \left( {g-2 \choose k-2} e^\diffp + {g-2\choose k-1} \right)   \right)} = \frac{\sum_{j=1}^g Y[1,j]^\diffp}{n k} ,
\\
 \hat{\pi} &= \left( \frac{ \left( {g-1\choose k-1} e^\diffp + {g-1\choose k} \right)\cdot \left( \frac{Y[1,j]^\diffp + Y[2,j]^\diffp}{n} \right)- \left(e^\diffp {g-2\choose k-2} + {g-2\choose k-1} \right) }{e^\diffp \left({ g-1 \choose k-1 } - { g-2 \choose k-2 } \right) - { g-2 \choose k-1 } } : j \in [g]\right) . 
\end{align*}

We can then use the $\chi^2$ statistic in \eqref{eq:chiSqStat} with the covariance matrix $C(\pi, p; \diffp, k)$ that we computed along with the estimates $\hat{p}$ and $\hat{\pi}$. Note that we showed in Lemma~\ref{lem:singularCovar} that the covariance is not full rank, so we will use $2g-1$ as the rank of the covariance matrix and $g-1$ degrees of freedom in the $\chi^2$ distribution in our test.

\subsection{Results}\label{sect:MultiplePropDiffResults}

In our results, we start by comparing with the existing (fully) local DP $\chi^2$ tests for independence from \cite{GaboardiRo18} in Figure~\ref{fig:IndependenceTests}.  As stated earlier, the difference between the local DP and group local DP setting is that in the latter the outcomes (successes or failures) cannot be changed but they can in the former.  We can see that the power can be drastically improved in the less restrictive model with the same privacy loss parameter $\diffp$.   
\begin{figure}
  \includegraphics[width=0.31\linewidth]{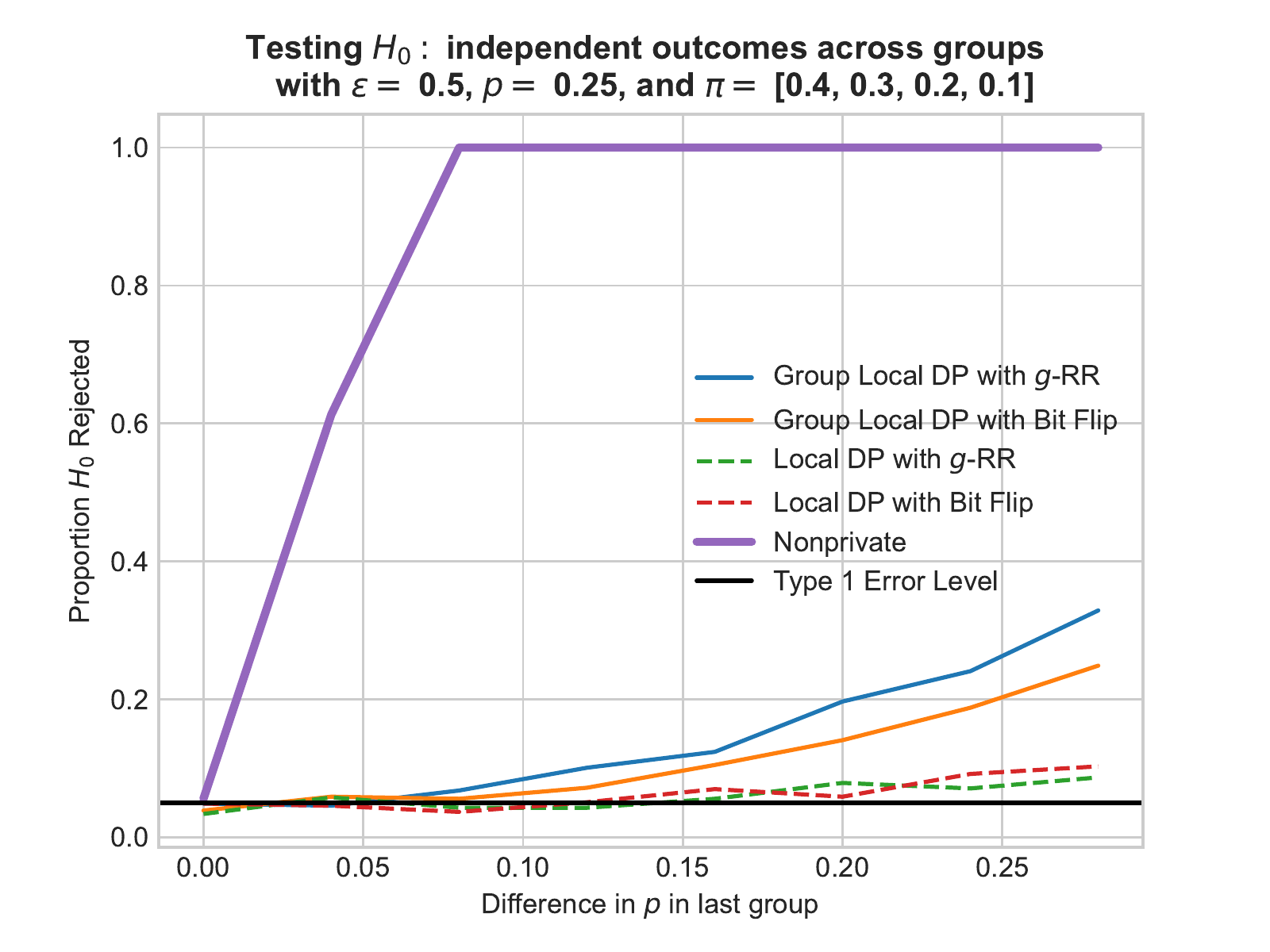}
  \includegraphics[width=0.31\linewidth]{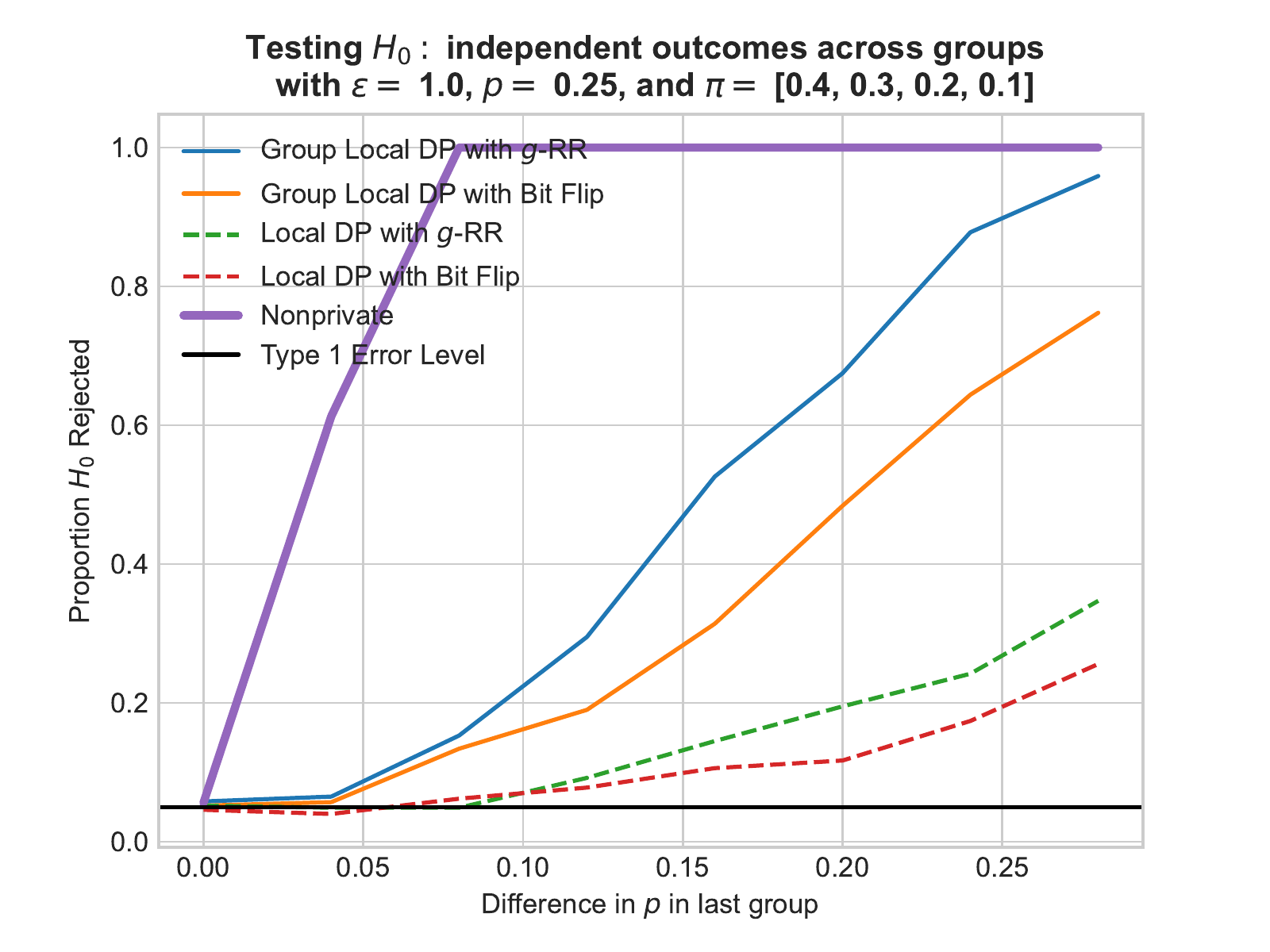}
    \includegraphics[width=0.31\linewidth]{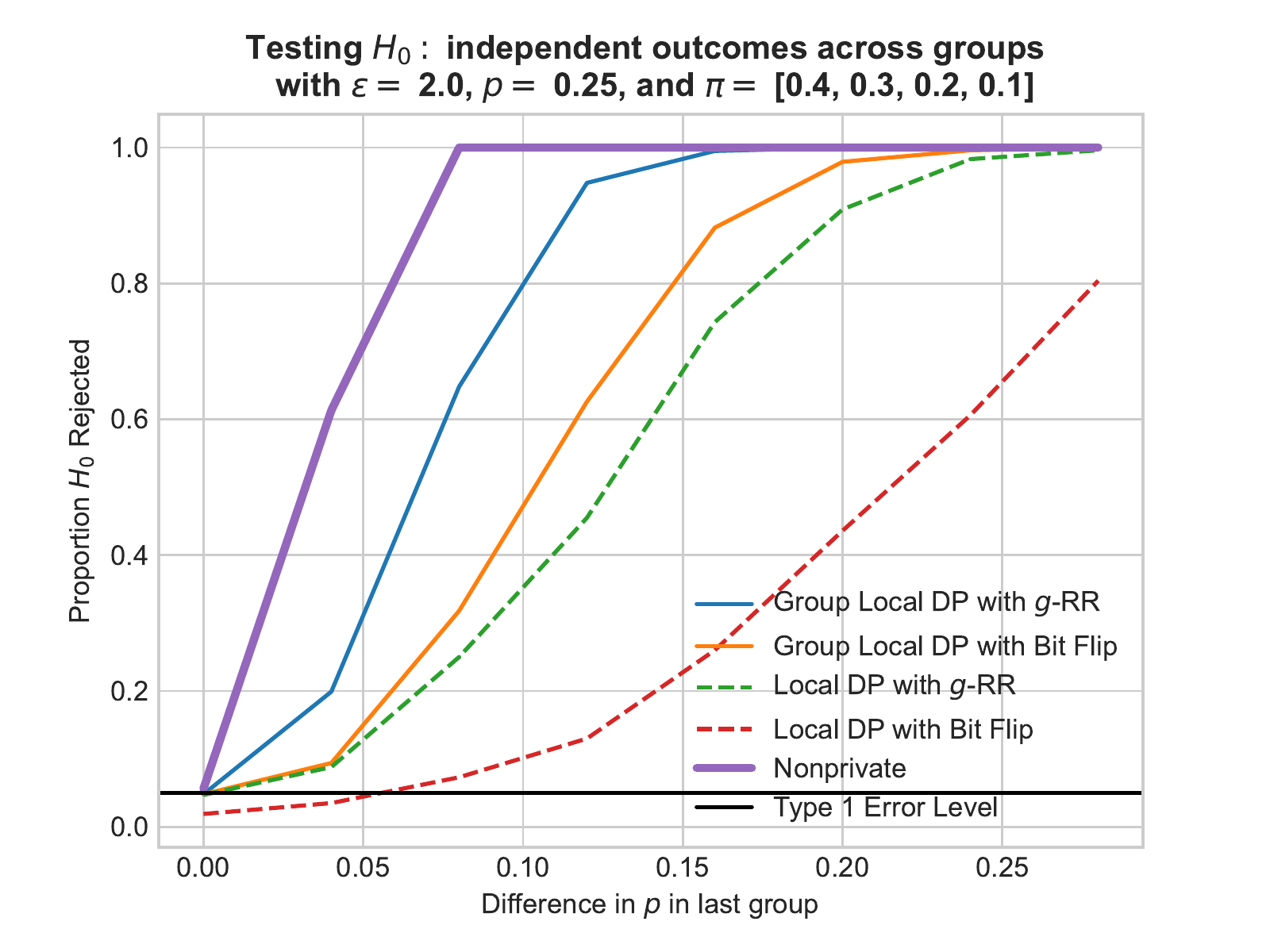}
  \caption{Comparing non-private (top bold line), Group Local DP (solid lines), and original LDP $\chi^2$ tests (dashed lines) for independence testing with the probability vector over the $g = 4$ groups to be $\pi = (0.4, 0.3, 0.2, 0.1)$ with various $\diffp$ and $n = 10000$.}
  \label{fig:IndependenceTests}
\end{figure}

We include experiments for the subset mechanism \cite{YeBa17}, which were not considered before for local DP $\chi^2$ independence tests.  The main takeaway for the subset mechanism is that it seems to strictly dominate over $g$-randomized response and bit flipping for the same level of privacy, while in \citet{GaboardiRo18} there were privacy levels where the $g$-randomized response algorithm outperformed bit flipping for the high $\diffp$ regime and vice versa in the low $\diffp$ regime.  See Figure~\ref{fig:multiplePropDiff} for experiments with various parameter settings where $g = 10$ and $\pi$ is uniform over the $g$ groups.  We can see that the subset mechanism dominates the other two at various different privacy levels, which is to be expected as the subset mechanism is known to be optimal for certain tasks.  Recall that as $\diffp$ gets large, the Subset Mechanism and $g$-randomized response are the same.

\begin{figure}
  \includegraphics[width=0.31\linewidth]{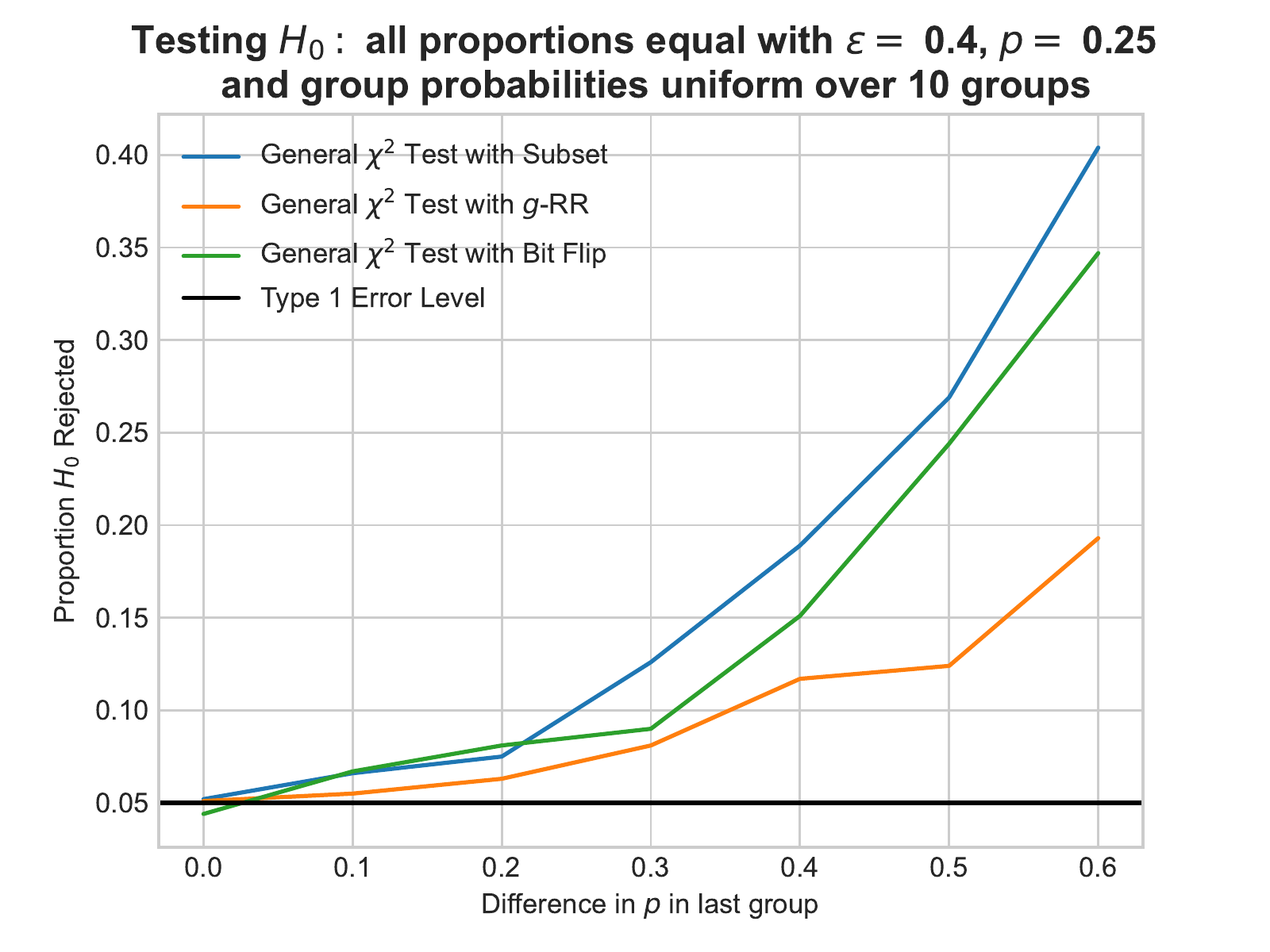}
  \includegraphics[width=0.31\linewidth]{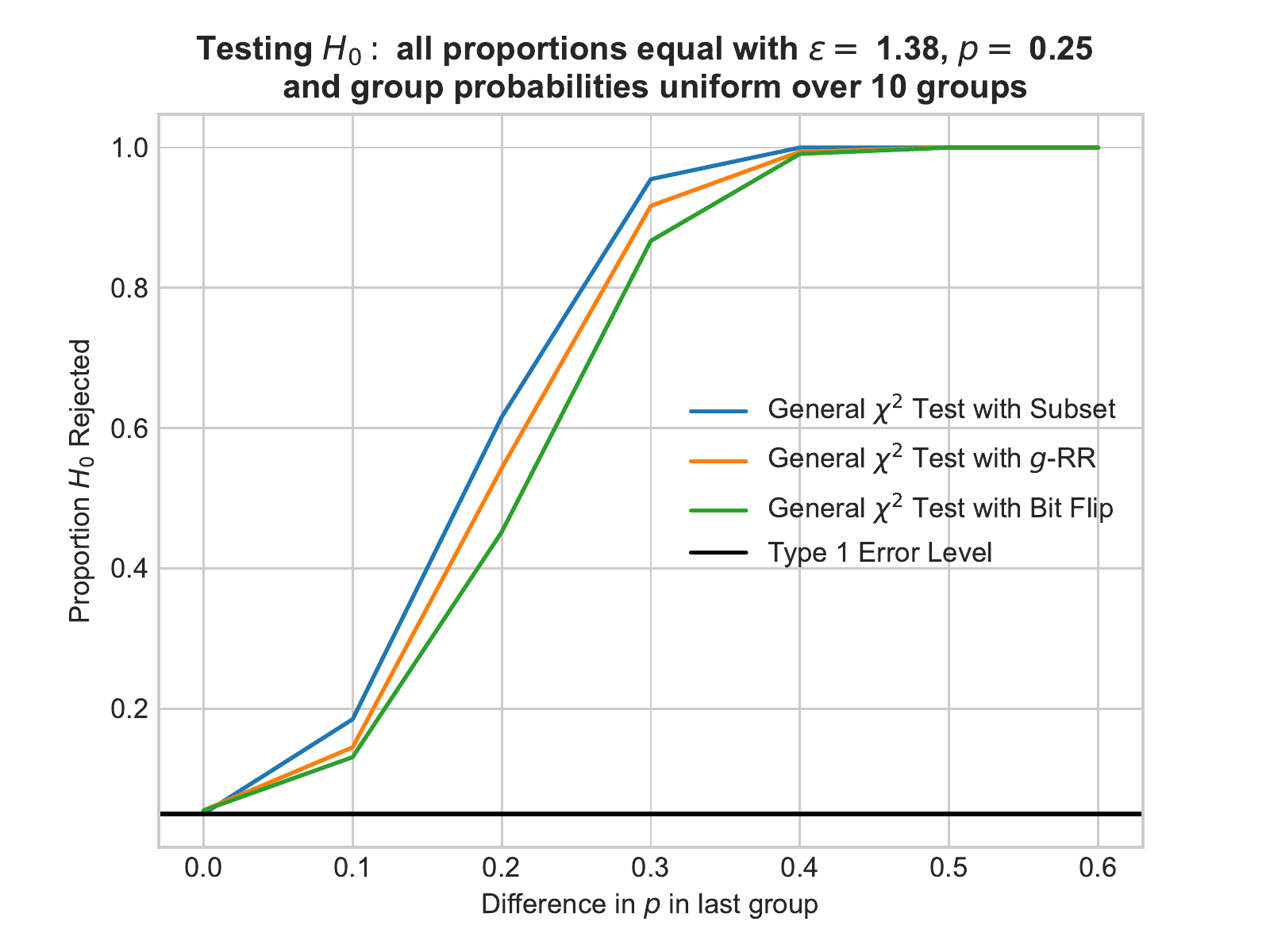}
    \includegraphics[width=0.31\linewidth]{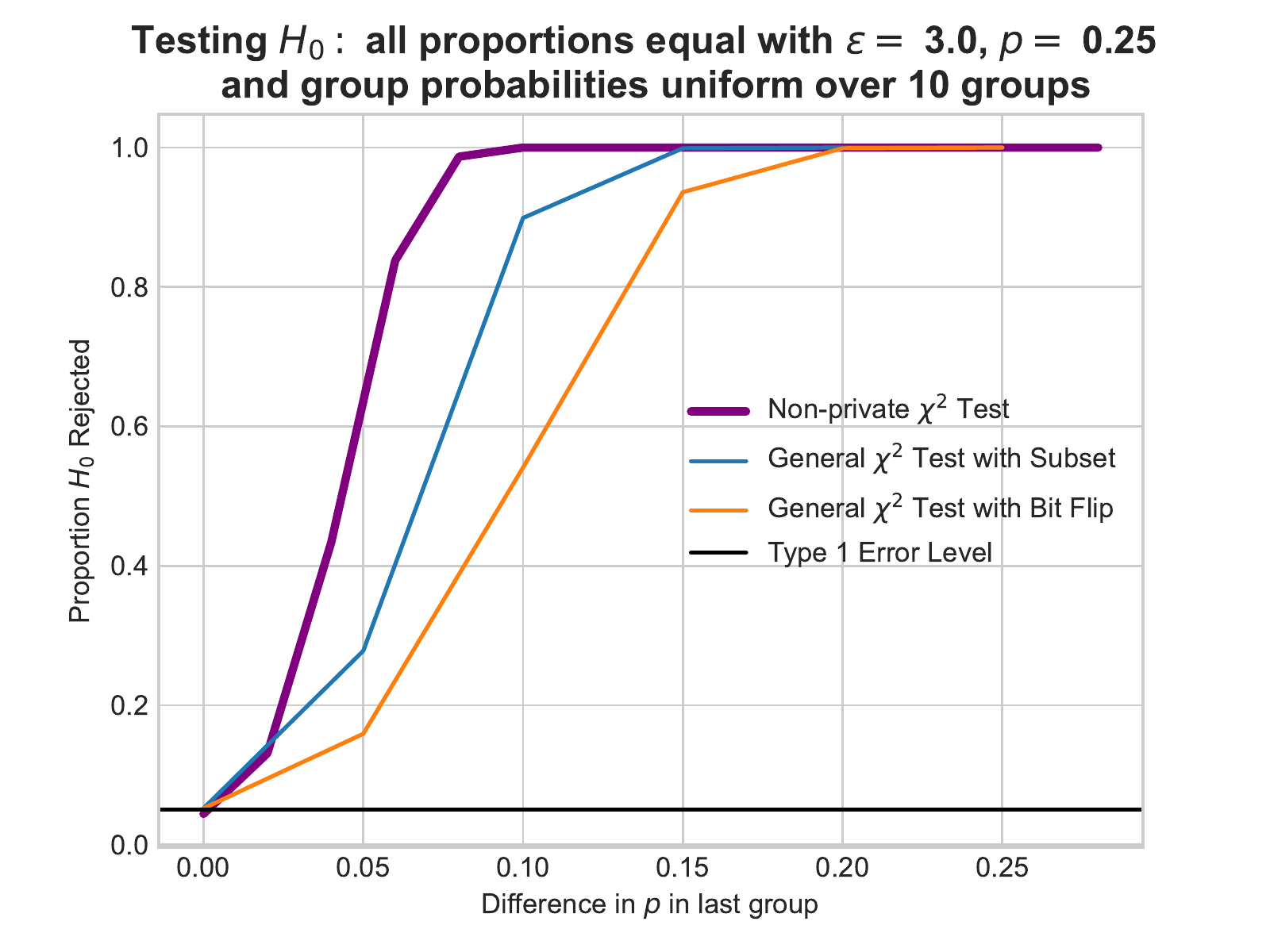}
  \caption{Comparing various Local Group DP mechanisms with corresponding $\chi^2$ test for testing whether there is a difference in success probability across different sensitive groups with various $\diffp$ and $n = 10000$.  Note that with $\diffp = 3$, we get $k = \lceil g/(e^\diffp + 1) \rceil = 1$, which is equivalent to $g$-randomized response.}
  \label{fig:multiplePropDiff}
\end{figure}

The tests we develop achieve higher empirical power than simply using the classical $\chi^2$ tests after privatizing the groups.  We present plots in Figure~\ref{fig:compareClassicMultipleProp} that shows for data generated with the subset mechanism at various privacy levels, the general $\chi^2$ test that accounts for the subset mechanism outperforms using the classic $\chi^2$ test, which does not account for the privacy mechanism.  We point out that when $\diffp$ gets larger, the two tests seem to perform similarly.  All plots consist of the proportion of times the null hypothesis was rejected over 1000 trials.  

\begin{figure}
  \includegraphics[width=0.31\linewidth]{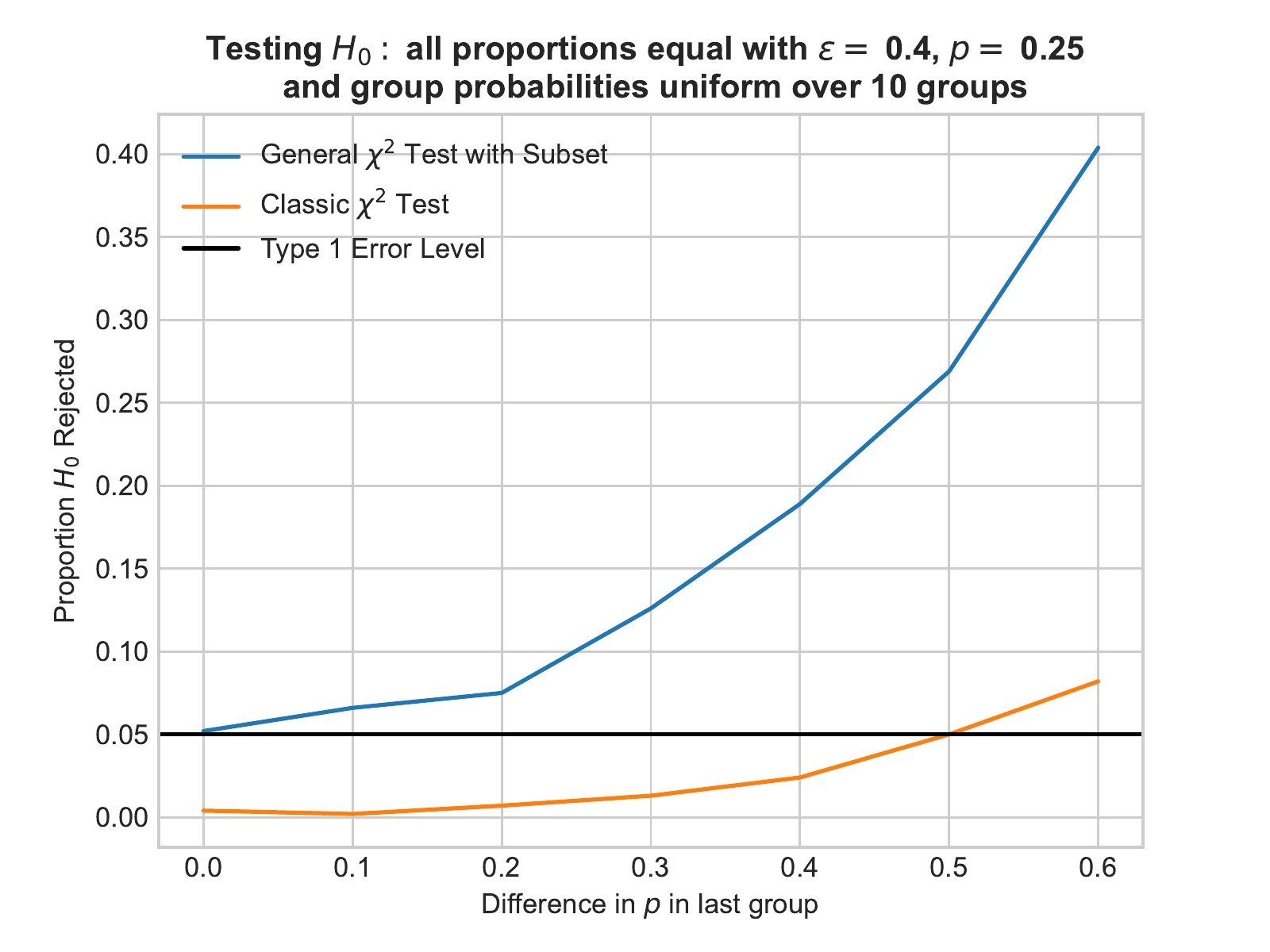}
  \includegraphics[width=0.31\linewidth]{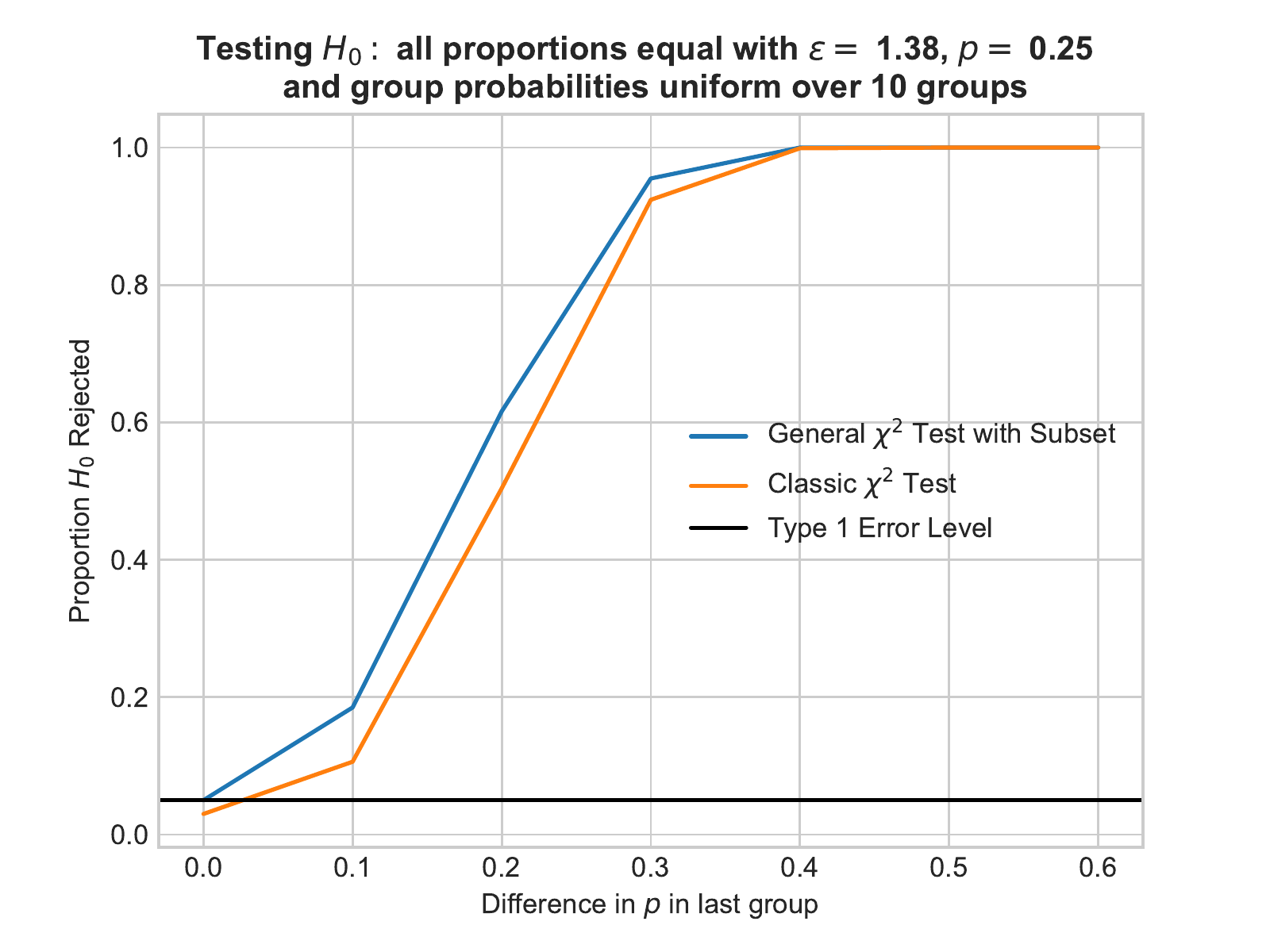}
    \includegraphics[width=0.31\linewidth]{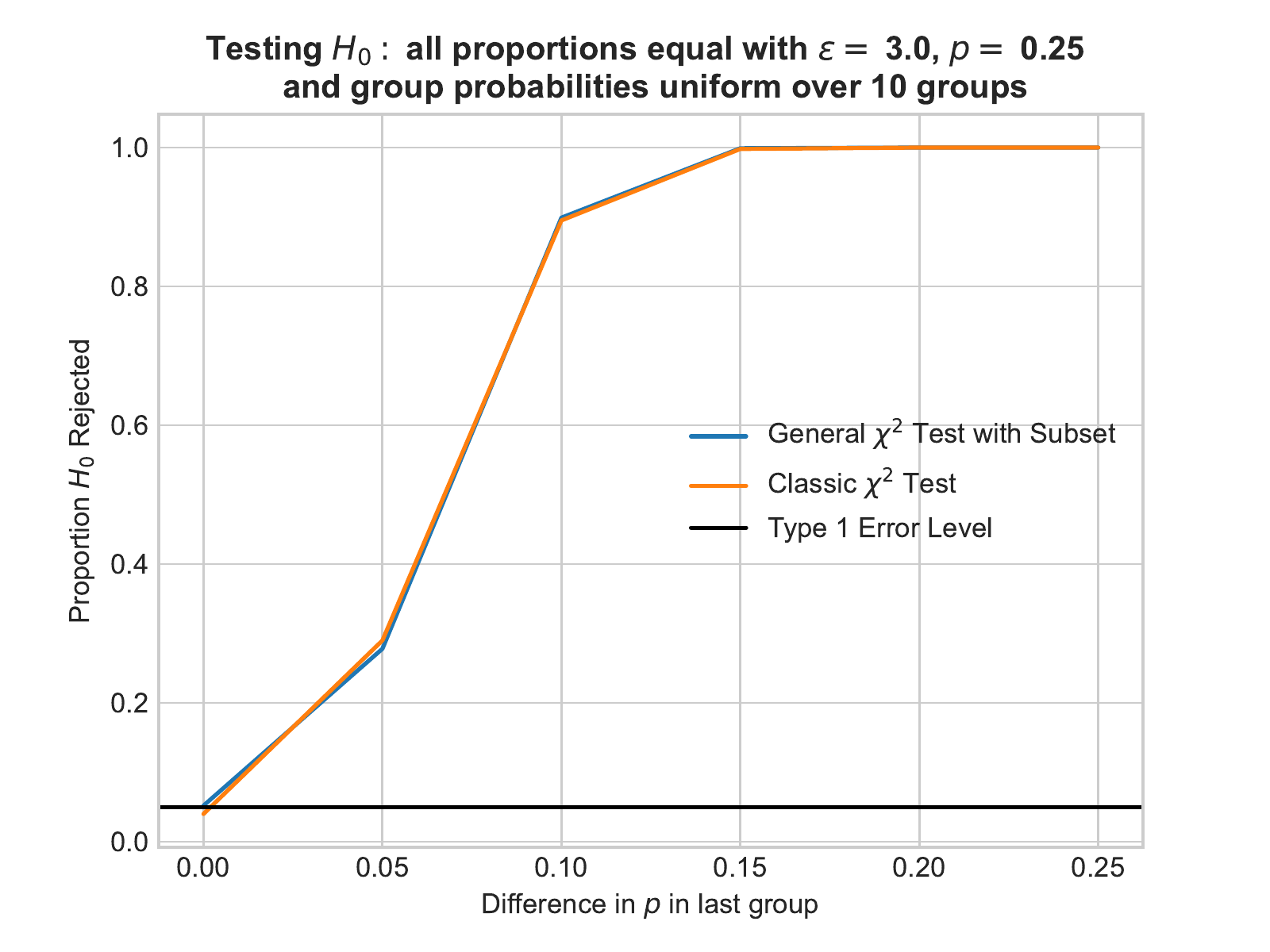}
  \caption{Comparing various Local Group DP mechanisms with corresponding $\chi^2$ test for testing whether there is a difference in success probability across different sensitive groups with various $\diffp$ and $n = 10000$.  Note that with $\diffp = 3$, we get $k = \lceil g/(e^\diffp + 1) \rceil = 1$, which is equivalent to $g$-randomized response.}
  \label{fig:compareClassicMultipleProp}
\end{figure}

We also evaluate our method on the UCI Adult dataset \cite{Adult} \texttt{adult.data}, where we will use Race as the sensitive group and the binary outcome as whether a sample makes more than \$50k salary.  Note that race contains 5 groups, with labels White, Asian-Pac-Islander, Amer-Indian-Eskimo, Other, and Black.  We now want to arrive at the same conclusion after privatizing the race of each sample as we would if we had not privatized it.  Figure~\ref{fig:AdultRaceTest} gives our results, which considers various levels of privacy and for each privacy level we compute 1000 independent trials of the subset mechanism on each sample's race and use our general $\chi^2$ test while comparing it to the traditional $\chi^2$ test for independence, which ignores the privacy mechanism.  We see that we can achieve more power for stronger levels of privacy, but since there is such a strong difference between proportions in the groups, i.e. we should reject the null hypothesis that there is no difference in proportions across all groups, for even moderate levels of privacy we arrive at the same conclusion as the non-private test almost all the time.  

\begin{center}
\begin{figure}
\centering
  \includegraphics[width=0.46\linewidth]{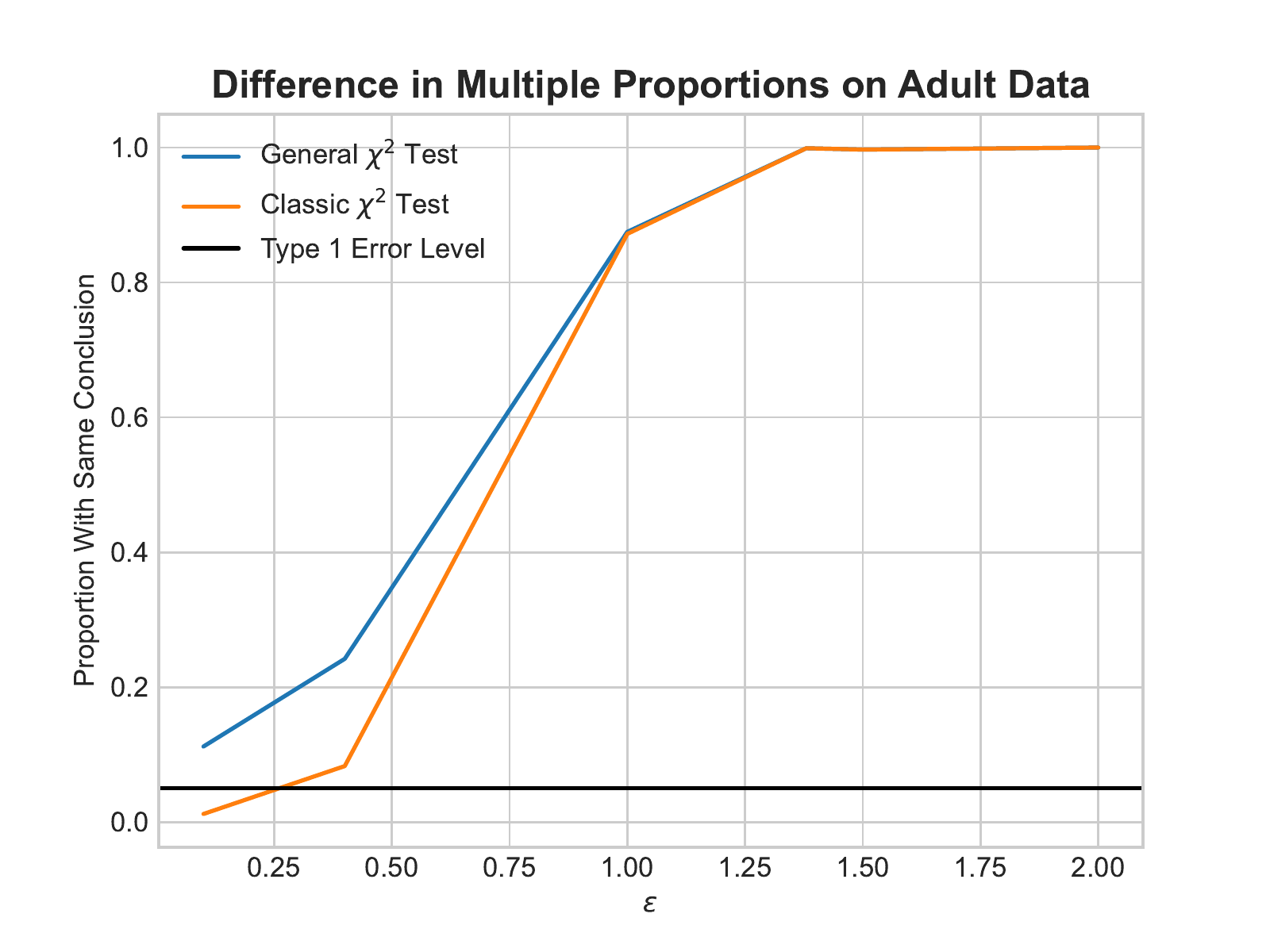}
  \caption{Proportion of times out of 1000 independent trials that we reject the null hypothesis of there being no difference in proportions across all groups on the UCI Adult dataset.  We use race as the sensitive group and the outcome is whether a sample makes more thant \$50k salary or not.}
  \label{fig:AdultRaceTest}
\end{figure}
\end{center}

\section{Testing Difference in Two Means: t-tests}
Up to this point, we have dealt with binary outcomes, but now move on to real valued outcomes.  We first want to test whether the mean is different between two groups, with the following section testing differences across more than two groups. As in the earlier section on testing the difference between proportions, we will also be interested in confidence intervals for the difference between two means. Note that in the fully local DP setting, we would need to privatize the group membership of each individual as well as the outcome.  Because the outcome is real valued, we would need to introduce additional parameters and/or assumptions on the data in order to ensure privacy, such as clipping the magnitude of any outcome and then adding noise proportional to this clip parameter.  Such approaches may heavily bias the data and also add an additional parameter to set.  Setting clip values too large makes noise swamp the signal and setting clip values too small will heavily bias outcomes before noise is added, creating a bias-variance tradeoff.  In our less restrictive privacy model we do not need to make such considerations.

Typically, one would use the classical t-test to test the difference between two means between samples $\{X_i[j]\}_{i=1}^{n_j} \stackrel{i.i.d.}{\sim}  \Normal{\mu_j}{\sigma^2_j}$ for $j \in \{1,2\}$. That is, we use the t-test statistic defined as follows where $s_1^2, s_2^2$ are the sample variances for groups $1$ and $2$, respectively.
\[
T = \frac{ \frac{1}{n_1} \sum_{i=1}^{n_1} X_i[1] - \frac{1}{n_2} \sum_{i=1}^{n_2} X_i[2] }{\sqrt{s_1^2/n_1 + s_2^2/n_2}{}}
\]
We would then compare the test statistic to a t distribution, which converges to a standard Gaussian if $n_1, n_2$ are large.
When introducing privacy, we want to know whether the t-test can still be used or whether a different test should be used.\footnote{Observe that in a standard t-test, we assume that samples are i.i.d. Gaussian, and the sample mean and sample standard deviation are independent; once we have added privacy, these assumptions no longer hold. In practice, however, the t-test appears robust to these assumption violations.}  In order to use the general $\chi^2$ statistic when privacy is included, we will need to have $\chi^2$ test that can handle real outcomes, which is what we will cover next.

\subsection{A $\chi^2$ Test for Difference in Means}

For binary outcomes, we were able to move from the $Z$-test to a $\chi^2$ test, and in fact $\chi^2$ tests are typically used instead of $Z$-tests in many common statistical packages.  However, for continuous outcomes, there is not a standard $\chi^2$ version of the t-test.  We then present a way to formulate the t-test as a $\chi^2$ and show similar performance.  Note that for binary outcomes, we could form a contingency table where the rows were outcomes (success/failure) and the columns were groups (group 1 or 2).  To fit this framework, a first approach would be to discretize the outcomes into bins, and form the contingency table with $r$ different rows, where $r$ is a predetermined value for the number of bins the outcomes will be placed in.  Unfortunately, this introduces additional complexity to the hypothesis test when adding privacy, and it is not clear how to discretize the outcome set and how this might impact statistical power.  

Instead, we present a way to form a contingency table for continuous outcomes without discretizing, by considering the moments of the samples, which we present in Table~\ref{table:momentContingency}.  It is easy to see that the contingency table for binary outcomes in Table~\ref{table:contingencyTable}, where instead of having failure outcomes, we could replace it with the 0th order moment, which would give the marginals.  Recall that we use $W_i \sim \text{Bern}(\pi)$ to determine the group of sample $i$ and $X_i[j] \sim \Normal{\mu_j}{\sigma^2_j} $ for $j \in \{ 1,2\}$.  The entries in the table will suffice for estimating the population parameters $\mu_{1}, \mu_2 \in \R$, $\sigma_1, \sigma_2 >0$, and $\pi \in (0,1)$.
\begin{table}[htbp]
\centering\setcellgapes{4pt}\makegapedcells
\begin{tabular}{ |c|c|c| } 
 \hline
 Sample Orders & \shortstack{Group $0$ \\ w.p. $\pi$} & \shortstack{Group $1$ \\ w.p. $1-\pi$} \\ 
 \hline
$0$-th  & $ \sum_{i=1}^n W_i$ & $ \sum_{i=1}^n(1-W_i)$ \\ 
 \hline
$1$-st & $ \sum_{i=1}^n W_i X_{i}[1]$ & $ \sum_{i=1}^n(1-W_i)X_{i}[2]$ \\ 
 \hline
 $2$-nd & $ \sum_{i=1}^n W_i X_{i}^2[1]$ & $ \sum_{i=1}^n(1-W_i) X_{i}^2[2]$ \\ 
 \hline
\end{tabular}
\caption{Contingency Table for continuous outcomes $\{X_{i}[j]\}_{i=1}^n \stackrel{i.i.d.}{\sim} \Normal{\mu_j}{\sigma^2_j}$ with $j \in \{ 1,2\}$ and group variable $\{ W_i\}_{i=1}^n \stackrel{i.i.d.}{\sim} \text{Bern}(\pi)$.} \label{table:momentContingency}
\end{table}

With this setup, we then want to test $H_0: \mu_ 1 = \mu_2 + \Delta$, where $\Delta = 0$ is common.  We will assume that the standard deviation of each group is different for each other as well. We then consider the random vector $Y = \sum_{i=1}^n Y_i$ which will consist of the entries from the contingency table above.  Note that we do not require entries for both $\sum_{i=1}^nW_i$ as well as $\sum_{i=1}^n(1-W_i)$ as one can be written in terms of the other.  Hence, we have
\begin{equation}
Y = 
\begin{pmatrix}
Y[1] =& \sum_{i=1}^n W_i \\
Y[2] = & \sum_{i=1}^n W_i X_{i,0} \\
Y[3] = & \sum_{i = 1}^n (1-W_i) X_{i,1} \\
Y[4] = & \sum_{i=1}^n W_i X_{i,0}^2 \\
Y[5] = & \sum_{i=1}^n (1-W_i) X_{i,1}^2 \\
\end{pmatrix}
\label{eq:nonPrivateDiffMeanStat}
\end{equation}
We then consider the individual i.i,d. samples $Y_i$ where $Y = \sum_{i=1}^n Y_i$ so that we can compute the expectation of $Y_i$ under the null hypothesis $\mu_1 = \mu_2 + \Delta$ for some $\Delta \in \R$
\[
\vec{\theta}(\pi, \mu_1, \mu_2, \sigma_1, \sigma_2) = \E[Y_i] = \left( \pi, \pi \mu_1, (1-\pi) \mu_2, \pi \left( \mu_1^2 + \sigma^2_1\right), (1-\pi) \left(\mu_2^2 + \sigma^2_2 \right) \right)^\intercal
\]

Note that we can compute the covariance matrix and estimates for $\pi, \mu_1, \mu_2, \sigma_1, \sigma_2$, but it will help to simplify the $\chi^2$ test first.  In particular, when we write out the $\chi^2$ statistic, the minimization will lead to the second moment terms (the last two entries in $Y$) to contribute nothing to the $\chi^2$ value, since we can zero out those coordinates by setting $\sigma_1^2 = Y[4]/n - \mu_1^2$ as long as $\mu_1^2 \leq Y[4]/n$, and similarly for $\sigma_2$.  Note that if it does turn out that for a particular $\mu_1$ we have $Y[4]/n - \mu_1^2$, we should reject, i.e. return a large statistic of say $10 g$ or so, since that would mean that zero variance would be our best estimate for that group.  Hence, we will only consider $Y =  (Y[1], Y[2], Y[3])^\intercal$ in our test.  If it can be assumed that the variances are equal across groups, then we can keep the coordinates $Y[3], Y[4]$ in our test statistic.  Hence, we will continue with $Y_i$ denoting the first three coordinates of the random vector in \eqref{eq:nonPrivateDiffMeanStat}.

We next calculate the covariance matrix $C(\pi, \mu_1, \mu_2, \sigma_1, \sigma_2)$ for the first 3 coordinates in $Y_i$
\[
C(\pi, \mu_1, \mu_2, \sigma_1, \sigma_2) = 
\begin{bmatrix}
\pi(1-\pi) & \pi \mu_1 - \pi^2 \mu_1 & - \pi (1-\pi) \mu_2 \\
\pi \mu_1 ( 1- \pi \mu_1) & \pi (\mu_1^2 + \sigma_1^2) -\pi \mu_1 & - \pi \mu_1 (1-\pi) \mu_2 \\
 \pi (1-\pi) \mu_2 & - \pi \mu_1 (1-\pi) \mu_2 & (1- \pi) (\mu_2^2 + \sigma_2^2) (1- (1-\pi) \mu_2)
\end{bmatrix}
\]
Under the null hypothesis $\mu_1 = \mu_2 + \Delta$, we will use the following estimates
\begin{align*}
\hat{\pi} &= Y[1]/n, \quad \hat{\mu} = \frac{Y[2] + Y[3]}{n},  \quad \hat{\mu}_1 = \hat{\mu} + (1-\hat{\pi}) \Delta, \quad \hat{\mu}_2 = \hat{\mu} - \hat{\pi} \Delta.\\
\sigma^2_1 &= \frac{Y[4]/n}{\hat{\pi}} - \hat{\mu_1}^2, \quad \sigma^2_2 = \frac{Y[5]/n}{1-\hat{\pi}} - \hat{\mu_2}^2
\end{align*}

We are now ready to calculate the $\chi^2$ statistic
\begin{equation}
D = \min_{\substack{\pi \in (0,1), \\ \mu_1, \mu_2 : \mu_1 = \mu_2 + \Delta}} \left\{ 
\begin{pmatrix}
Y[1]/n - \pi \\
 Y[2]/n - \pi \mu_1 \\
 Y[3]/n - (1-\pi) \mu_2  
 \end{pmatrix} ^\intercal 
 C(\hat{\pi}, \hat{\mu_1}, \hat{\mu_2}, \hat{\sigma_1}, \hat{\sigma_2})^{-1}
  \begin{pmatrix}
Y[1]/n - \pi \\
 Y[2]/n - \pi \mu_1 \\
 Y[3]/n - (1-\pi) \mu_2  
 \end{pmatrix} 
 \right\}
 \label{eq:diffMeanChiSqStat}
\end{equation}
We will evaluate the test statistic $D$ against a $\chi^2$ with $(3 - 2) = 1$ degree of freedom. We now compare this $\chi^2$ based hypothesis test with the traditional t-test described in the previous subsection, at a significance level of 95\%.  We consider the null hypothesis $H_0: \mu_1 = \mu_2 = \mu$ and generate data in two groups with equal variances.  First, we modify the shared mean $\mu$ as well as the common variance and the probability $\pi$ of being in group $1$.  The left plot of Figure~\ref{fig:tVsChi} shows confidence intervals using the standard $t$-test statistic (assuming unequal variances) and the general $\chi^2$ statistic using an approach similar to when we had binary outcomes in Section~\ref{sect:CI_prop}.  When computing the left and right end points of the confidence interval for proportions, we could simply consider $\Delta \in [-1,1]$.  However, for differences in means, we will use the data to determine lower and upper bounds on the candidate confidence interval region.   We can simply take the sample mean in both groups and add or subtract say 10 standard deviations within each group. 
The results show overlapping confidence intervals.  
The right plot of Figure~\ref{fig:tVsChi} shows the average number of times over 1000 trials that each test rejected the null hypothesis $H_0: \mu_1 = \mu_2$ as we change the difference $\mu_1 - \mu_2 > 0$, so that we should reject more frequently.  The proportion of null hypotheses rejected is indistinguishable between the two tests.

 \begin{figure}
  \begin{center}
\includegraphics[width=0.45\textwidth]{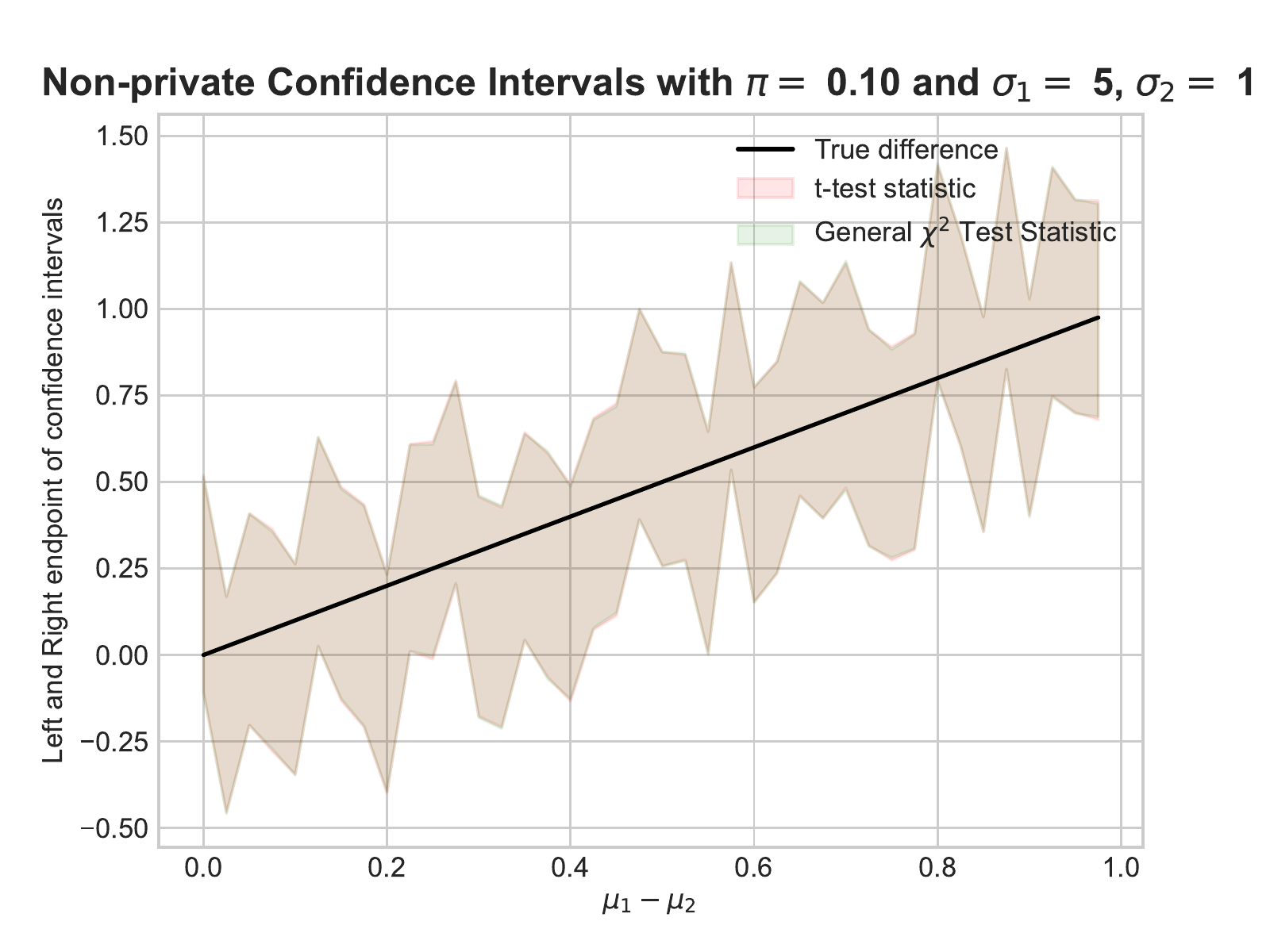}
\includegraphics[width=0.45\textwidth]{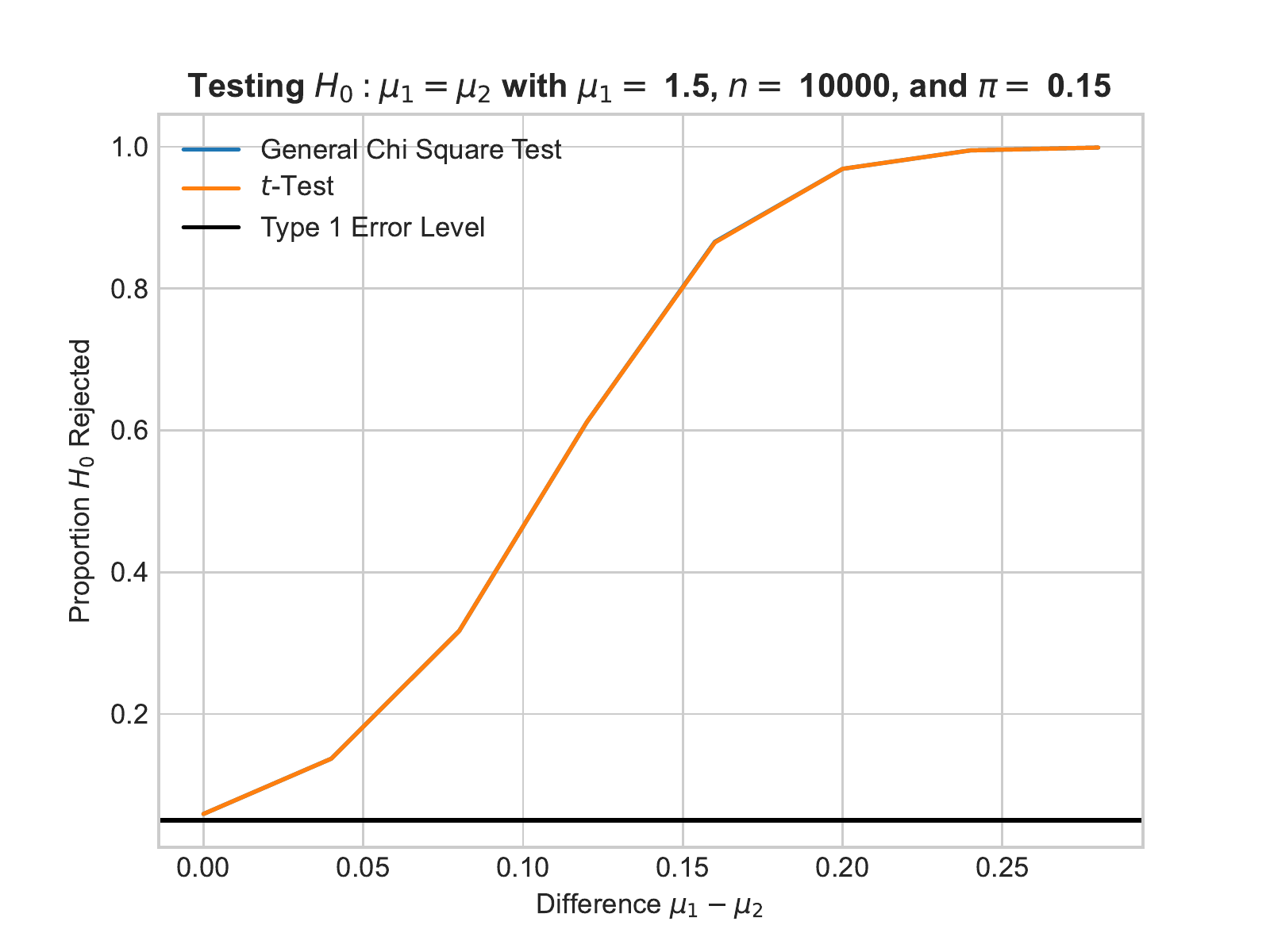}
\caption{(Left) Comparing confidence intervals using the standard t-test statistic and the general $\chi^2$ statistic in \eqref{eq:diffMeanChiSqStat} with $\mu_2 = 0$.  The bounds are completely overlapping, resulting in identical tests. (Right) Comparing power curves of the t-test and the general $\chi^2$ test for $H_0: \mu_1 = \mu_2$, which are on top of each other. In this case we use $\sigma_1 = 2, \sigma_2 = 1$ when we generate data.}
\label{fig:tVsChi}
\end{center}
\end{figure}

\subsection{Private Tests for Difference in Means}\label{sect:privateTTest}
Recall that we use the traditional randomized response when privatizing group $j \in \{1,2 \}$.  We will use the general $\chi^2$ approach to design a private test for differences between two means.  This test can then be used to calculate confidence intervals, in the same way that confidence intervals were derived from the $\chi^2$ statistic for binary outcomes in Section~\ref{sect:CI_prop}.  We will then consider the privatized version of the random vector $Y$ from \eqref{eq:nonPrivateDiffMeanStat}, but we will only use the first 3 coordinates, as the last two coordinates will be eliminated in the minimization of the statistic, as we mentioned earlier.  

For randomized response, we will write $Z^\diffp_{i}[j] \sim \text{Bern}(\tfrac{e^\diffp}{e^\diffp + 1})$ for $i \in [n]$ and $j \in \{1,2\}$, and then write the privatized vector $Y_i^\diffp$ in terms of $Z^\diffp_i[j]$,

\begin{align*}
Y_i^\diffp & = 
\begin{pmatrix} Z^\diffp_{i}[1] \cdot W_i  + (1-Z^\diffp_{i}[2]) \cdot (1-W_i) \\  
			Z^\diffp_{i}[1] \cdot W_i \cdot X_{i}[1] + (1-Z^\diffp_{i}[2]) \cdot (1-W_i) \cdot X_{i}[2] \\
			(1-Z^\diffp_{i}[1])\cdot W_i \cdot X_{i}[1] + Z^\diffp_{i}[2] \cdot (1-W_i) \cdot X_{i}[2]
\end{pmatrix} 
\end{align*}
Next, we compute its expectation $\E[Y_i^\diffp] = \vec{\theta}^\diffp(\pi,\mu_1, \mu_2)$ where
\[
\vec{\theta}^\diffp(\pi,\mu_1, \mu_2) = 
\begin{pmatrix}  
\tfrac{e^\diffp}{e^\diffp + 1}  \pi  +  \tfrac{1}{e^\diffp + 1} \left(1-\pi\right) \\
\tfrac{e^\diffp}{e^\diffp + 1}  \pi \mu_1 +  \tfrac{1}{e^\diffp + 1} \left(1-\pi\right) \mu_2\\
\tfrac{1}{e^\diffp + 1}  \pi \mu_1 +  \tfrac{e^\diffp}{e^\diffp + 1} \left(1-\pi\right) \mu_2\\
\end{pmatrix}
\]

We can then compute $Y_i^\diffp$ covariance matrix $C(\pi, \mu_1, \mu_2, \sigma_1, \sigma_2; \diffp)$.  
\begin{align*}
& C(\pi, \mu_1, \mu_2, \sigma_1, \sigma_2; \diffp)  = \E\left( Y_i^\diffp (Y_i^\diffp)^\intercal \right) - \E\left[ Y_i^\diffp \right] \E\left[ Y_i^\diffp \right]^\intercal \\
& \qquad = \frac{1}{e^\diffp + 1}
\begin{bmatrix}
e^\diffp \pi  + \left(1-\pi\right) & \pi e^\diffp \mu_1 + (1-\pi) \mu_2  & 0 \\
\pi e^\diffp \mu_1 + (1-\pi) \mu_2 & \pi e^\diffp (\mu_1^2 + \sigma_1^2)  + (1-\pi) (\mu_2^2 + \sigma_2^2) & 0 \\
 0 & 0 & \pi (\mu_1^2 + \sigma_1^2)  + (1-\pi) e^\diffp (\mu_2^2 + \sigma_2^2) 
\end{bmatrix} \\
& \qquad \qquad 
- \vec{\theta}^\diffp(\pi,\mu_1, \mu_2) \vec{\theta}^\diffp(\pi,\mu_1, \mu_2)^\intercal
\end{align*}

Now, we need to assign estimates for the parameters $\pi, \mu_1, \mu_2, \sigma_1, \sigma_2$. Here, we have the null hypothesis $H_0: \mu_1 = \mu_2 + \Delta$ and sample data $Y^\diffp$. Beginning with $\hat{\pi}, \hat{\mu}_1$, and $\hat{\mu}_2$, we use 
\begin{align*}
\hat{\pi} &= (e^\diffp + 1) \left( \frac{Y^\diffp[1]/n - \tfrac{1}{e^\diffp + 1}}{e^\diffp - 1} \right) \\
\begin{bmatrix}
\hat{\pi} \tfrac{e^{\diffp}}{e^\diffp + 1} & (1-\hat{\pi}) \tfrac{1}{e^\diffp + 1} \\ 
\hat{\pi} \tfrac{1}{e^\diffp + 1} & (1-\hat{\pi}) \tfrac{e^\diffp}{e^\diffp + 1}
\end{bmatrix} 
\begin{pmatrix}
\hat{\mu}_1 \\
\hat{\mu}_2
\end{pmatrix} 
& = \begin{pmatrix}
Y^\diffp[2]/n \\
Y^\diffp[3] /n 
\end{pmatrix}
\end{align*}
We then use the null hypothesis to replace $\hat{\mu}_1 = \hat{\mu}_2 + \Delta$ and then solve the over constrained system of equations via least squares in which case we have
\[
\hat{\mu}_2 = \frac{\hat{\pi} \tfrac{e^{\diffp}}{e^\diffp + 1}  \left( Y^\diffp[2]/n - \hat{\pi} \tfrac{e^\diffp}{e^\diffp + 1} \Delta \right)  +(1-\hat{\pi}) \tfrac{1}{e^\diffp + 1} \left( Y^\diffp[3]/n - \hat{\pi} \tfrac{1}{e^\diffp + 1} \Delta \right)}{\left(\hat{\pi} \tfrac{e^{\diffp}}{e^\diffp + 1} \right)^2 + \left( (1-\hat{\pi}) \tfrac{1}{e^\diffp + 1} \right)^2}
\]

For $\sigma_1, \sigma_2$, we will use different estimates in the two diagonal entries in which they appear, corresponding to the estimation sample. We replace the $\sigma_1$ in the $C[2,2]$ diagonal entry with the sample variance $s_1^2$ of $\{Y_{i}^\diffp[2] \}_{i=1}^n$. Analogously, for the $C[3,3]$ diagonal entry, we replace $\sigma_2$ with the sample variance $s_2^2$ of $\{Y_i^\diffp[3] \}_{i=1}^n$.  We do point out that under the estimates $\hat{\mu}_1 = \hat{\mu}_2 + \Delta$, there might not be a possible $\sigma_1, \sigma_2\geq 0$ that can achieve the sample variances $s_1^2, s_2^2$.  We then compute the theoretical variance of our observations $Y^\diffp[2], Y^\diffp[3]$
\begin{align*}
\textrm{Var}(Y^\diffp[2]) & =  \mu_1^2 \left(1- \tfrac{\pi e^\diffp}{e^\diffp +1}\right)  \tfrac{\pi e^\diffp}{e^\diffp +1} + \mu_2^2 \left(1- \tfrac{(1-\pi)}{e^\diffp +1}\right) \tfrac{(1-\pi)}{e^\diffp +1}  - 2 \tfrac{ \pi (1-\pi) e^\diffp}{(e^\diffp + 1)^2} \mu_1\mu_2 + \tfrac{\pi e^\diffp}{e^\diffp + 1} \sigma_1^2 +  \tfrac{(1-\pi)}{e^\diffp + 1} \sigma_2^2   \\
\textrm{Var}(Y^\diffp[3]) & =  \mu_1^2 \left(1- \tfrac{\pi}{e^\diffp +1}\right) \tfrac{\pi}{e^\diffp +1} + \mu_2^2 \left(1- \tfrac{(1-\pi)e^\diffp}{e^\diffp +1}\right)  \tfrac{(1-\pi)e^\diffp}{e^\diffp +1}  - 2 \tfrac{ \pi (1-\pi)e^\diffp}{(e^\diffp + 1)^2} \mu_1\mu_2 + \tfrac{\pi}{e^\diffp + 1} \sigma_1^2 +  \tfrac{(1-\pi)e^\diffp}{e^\diffp + 1} \sigma_2^2 
\end{align*} 

Because $\sigma_j^2 \geq 0$, we need to ensure our estimate $s_j$ for $\textrm{Var}(Y^\diffp[j])$ for $j \in \{1,1\}$.  Thus we check if the following inequalities are satisfied.
\begin{align*}
s_1^2 & \geq  \hat{\mu}_1^2 \left(1- \tfrac{\hat{\pi} e^\diffp}{e^\diffp +1}\right)  \tfrac{\hat{\pi} e^\diffp}{e^\diffp +1} + \hat{\mu}_2^2 \left(1- \tfrac{(1-\hat{\pi})}{e^\diffp +1}\right) \tfrac{(1-\hat{\pi})}{e^\diffp +1}  - 2 \tfrac{ \hat{\pi} (1-\hat{\pi}) e^\diffp}{(e^\diffp + 1)^2} \hat{\mu}_1 \hat{\mu}_2
\\
s_2^2 & \geq  \hat{\mu}_1^2 \left(1- \tfrac{\hat{\pi}}{e^\diffp +1}\right)  \tfrac{\hat{\pi} }{e^\diffp +1} + \hat{\mu}_2^2 \left(1- \tfrac{(1-\hat{\pi})e^\diffp}{e^\diffp +1}\right) \tfrac{(1-\hat{\pi})e^\diffp}{e^\diffp +1}  - 2 \tfrac{ \hat{\pi} (1-\hat{\pi}) e^\diffp}{(e^\diffp + 1)^2} \hat{\mu}_1 \hat{\mu}_2
\end{align*}
If they are not satisfied, we replace $s_j^2$ with the corresponding right hand side, essentially using $\sigma_1 = \sigma_2 = 0$ in our estimate.  

Putting this all together, we have the following $\chi^2$ statistic, $D^\diffp$, which will use $Y^\diffp$ instead of $Y$ in \eqref{eq:nonPrivateDiffMeanStat} which we compare to a $\chi^2$ with 1 degree of freedom to base our hypothesis test.
\begin{equation}
D^\diffp = 
 \min_{\substack{\pi \in (0,1), \\ \mu_1, \mu_2 : \mu_1 = \mu_2s + \Delta}} \left\{ 
\left(  Y^\diffp  - \vec{\theta}^\diffp\left(\pi, \mu_1, \mu_2 \right) \right)^\intercal 
 C(\hat{\pi}, \hat{\mu_1}, \hat{\mu_2}, \hat{\sigma_1}, \hat{\sigma_2})^{-1}
\left( 
Y^\diffp  - \vec{\theta}^\diffp\left(\pi, \mu_1, \mu_2 \right)
\right)
 \right\}
 \label{eq:tTestChiSqStat}
\end{equation}
where 
\[
Y^\diffp  - \vec{\theta}^\diffp\left(\pi, \mu_1, \mu_2 \right) = 
\begin{pmatrix}
Y^\diffp[1]/n - \tfrac{e^\diffp}{e^\diffp + 1}  \pi  +  \tfrac{1}{e^\diffp + 1} \left(1-\pi\right) \\
 Y^\diffp[2]/n - \tfrac{e^\diffp}{e^\diffp + 1}  \pi \mu_1 +  \tfrac{1}{e^\diffp + 1} \left(1-\pi\right) \mu_2 \\
 Y^\diffp[3]/n - \tfrac{1}{e^\diffp + 1}  \pi \mu_1 +  \tfrac{e^\diffp}{e^\diffp + 1} \left(1-\pi\right) \mu_2  
 \end{pmatrix} .
\]

\subsection{Results}
We now present results of our privatized tests for testing the difference in means using our general $\chi^2$ framework.  In Figure~\ref{fig:tTestPowerComparisons}, we compare our test with the naive approach of using the t-test as if no privacy has been introduced, as well as the baseline t-test on non-privatized groups.  We chose a fairly extreme setting of parameters in the data distribution to show that the $\chi^2$ approach can achieve similar power to the classical t-test approach, while in more symmetric settings, i.e. $\sigma_1 \approx \sigma_2$ and $\pi \approx 0.5$, they perform similarly.  We apply the general rule of thumb\footnote{There are similar rule of thumb in traditional statistical tests, where if a group size is too small then the test is inconclusive.}  that if at any point our estimate group size $\hat{\pi} \cdot n$ in either group is less than 5, we simply return a zero statistic and hence fail to reject the null.
 \begin{figure}
  \begin{center}
\includegraphics[width=0.45\textwidth]{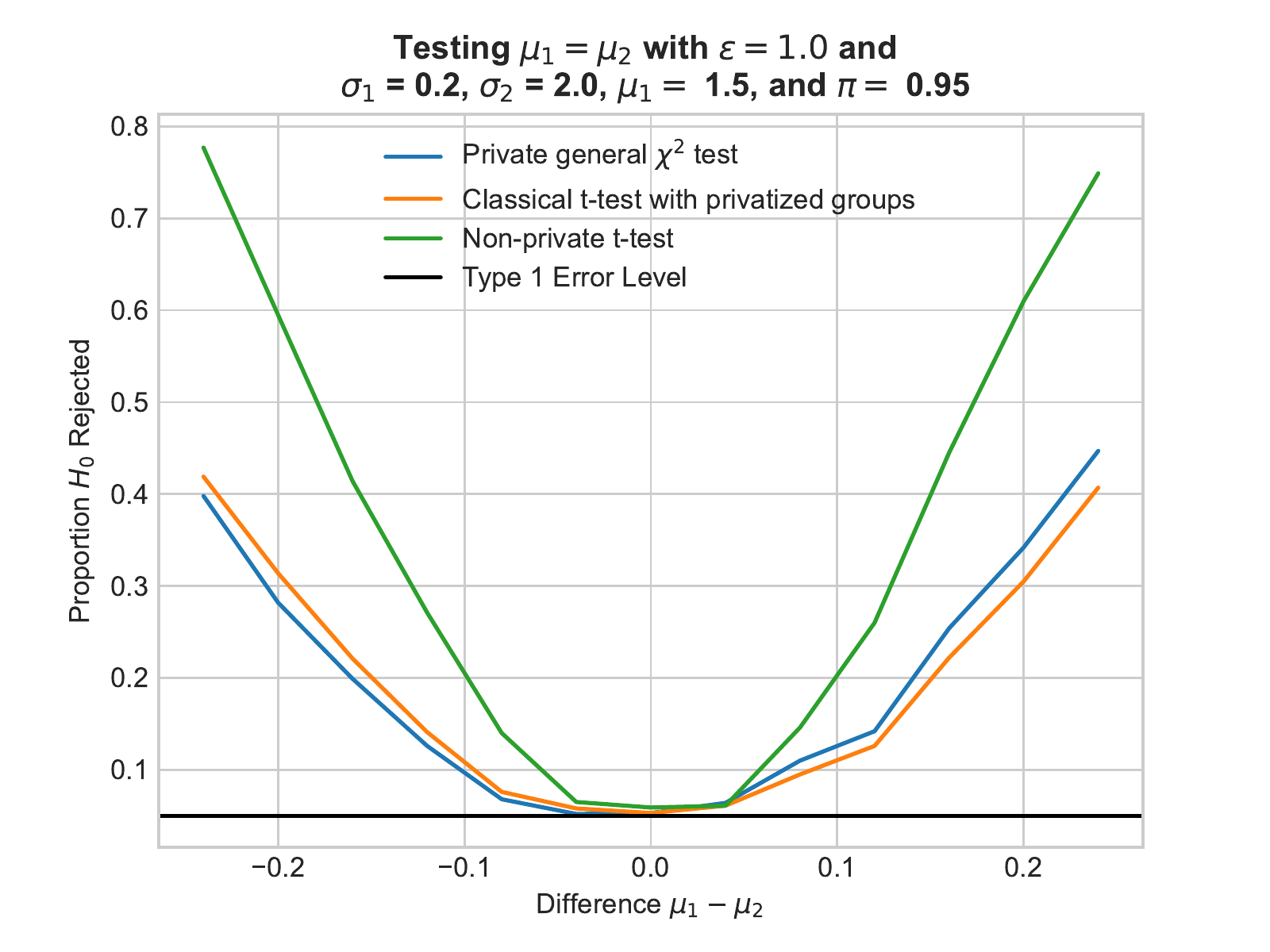}
\includegraphics[width=0.45\textwidth]{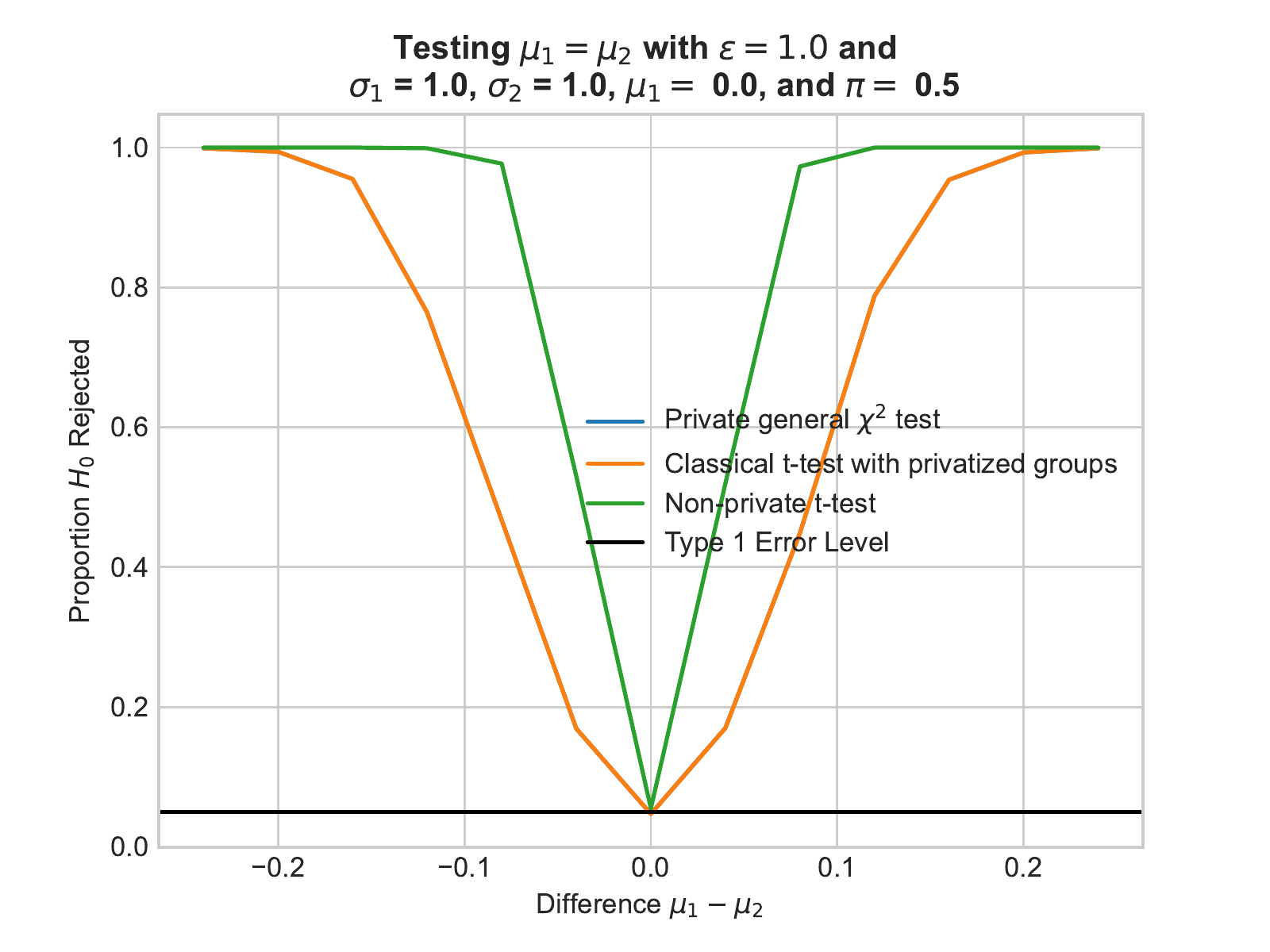}
\caption{Comparing power of t-test without modification on privatized data and the $\chi^2$ test with test statistic given in \eqref{eq:tTestChiSqStat} with $\epsilon = 1.0$ and $n = 10000$ samples. The left plot has distribution parameters $\pi = 0.05, \mu_1 = 1.5$, $\sigma_1 = 2$ and $\sigma_2 = 0.2$., while the right plot has distribution parameters  $\pi = 0.5, \mu_1 = 0$, $\sigma_1 = 1$ and $\sigma_2 = 1$ }
\label{fig:tTestPowerComparisons}
\end{center}
\end{figure}

We also present results on the confidence intervals of the difference in means, as we did in Section~\ref{sect:CI_prop} for binary outcomes.  As previously stated, we will use a similar approach to the binary outcome case and use the $\chi^2$ statistic for various $\Delta = \mu_1 - \mu_2$.  

For the t-test based confidence intervals, we correct the difference in means in the same way as in \eqref{eq:DeltaEpsilon}, and we compare the confidence intervals we get with the general $\chi^2$ approach. We present our results in Figure~\ref{fig:tTestConfidenceIntervals}, where we fix $\mu_1 = 0$, $\diffp =1$, and $n = 1000$, while we vary $\mu_1 - \mu_2$ in each plot and change $\pi, \sigma_1, \sigma_2$ in the different plots.  Note that the confidence intervals from the general $\chi^2$ statistics sometimes produces wider confidence intervals, although they are very close to the confidence intervals from the t-test statistic with a correction.  

 \begin{figure}
  \begin{center}
\includegraphics[width=0.31\textwidth]{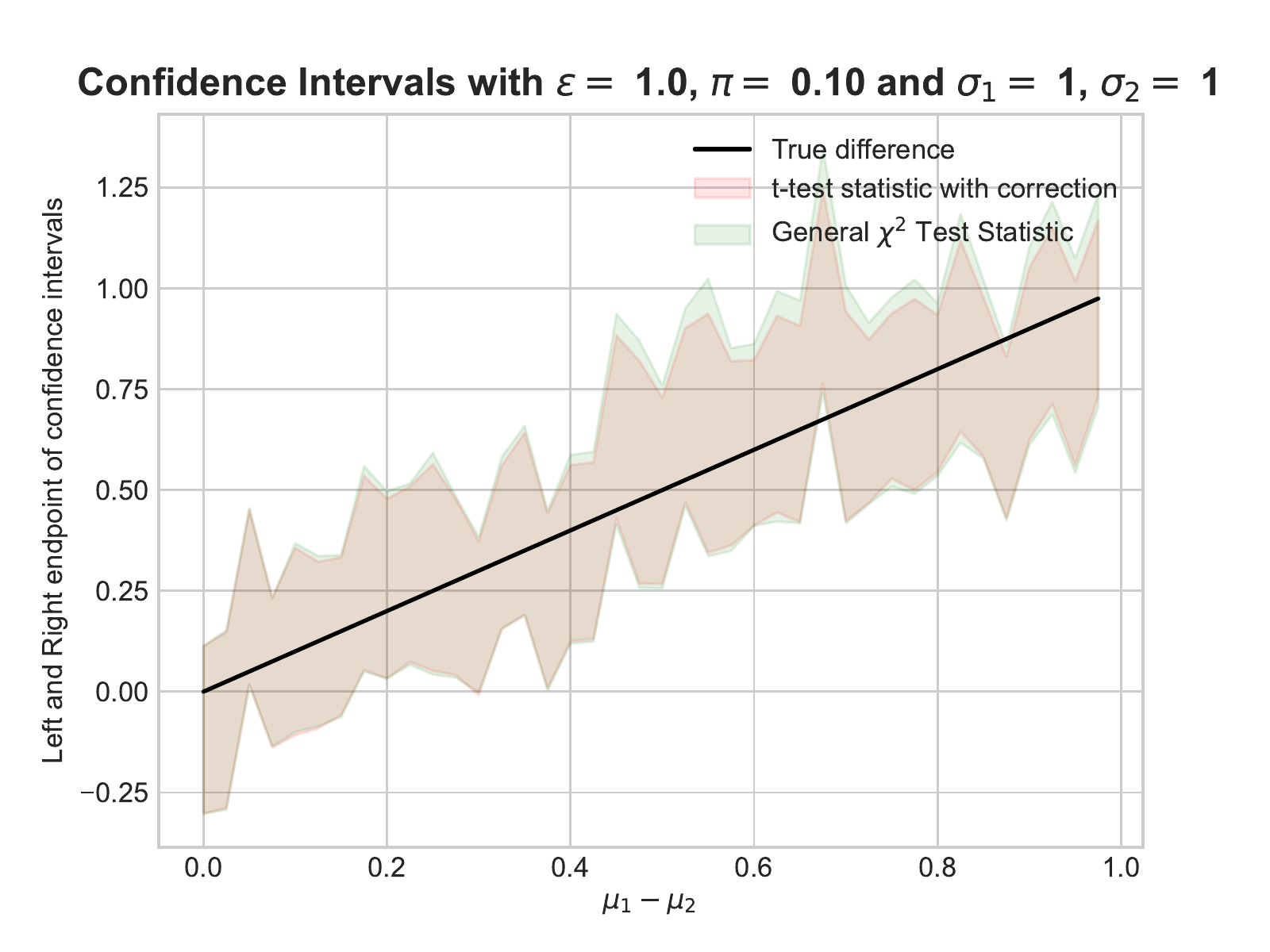}
\includegraphics[width=0.31\textwidth]{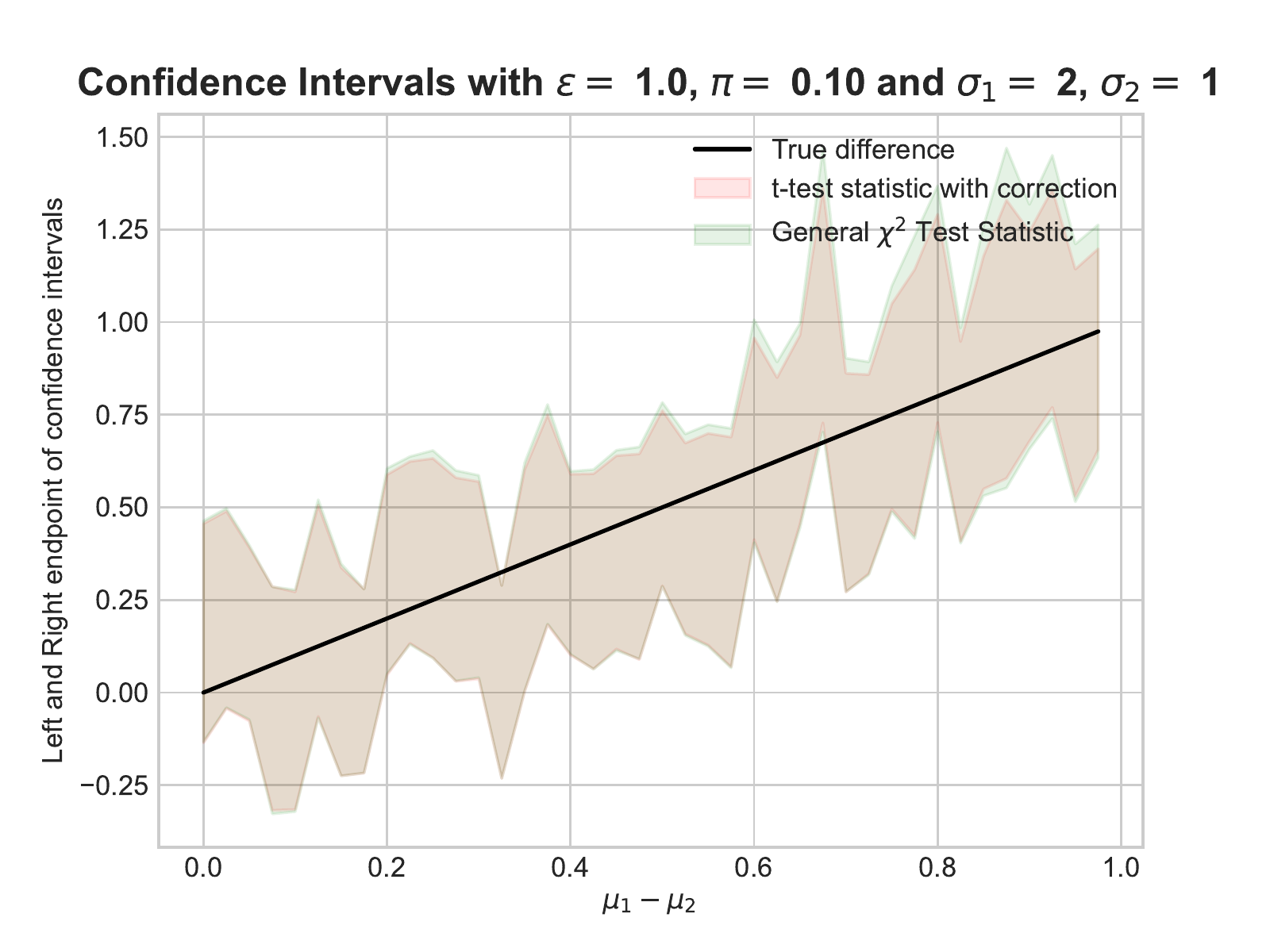}
\includegraphics[width=0.31\textwidth]{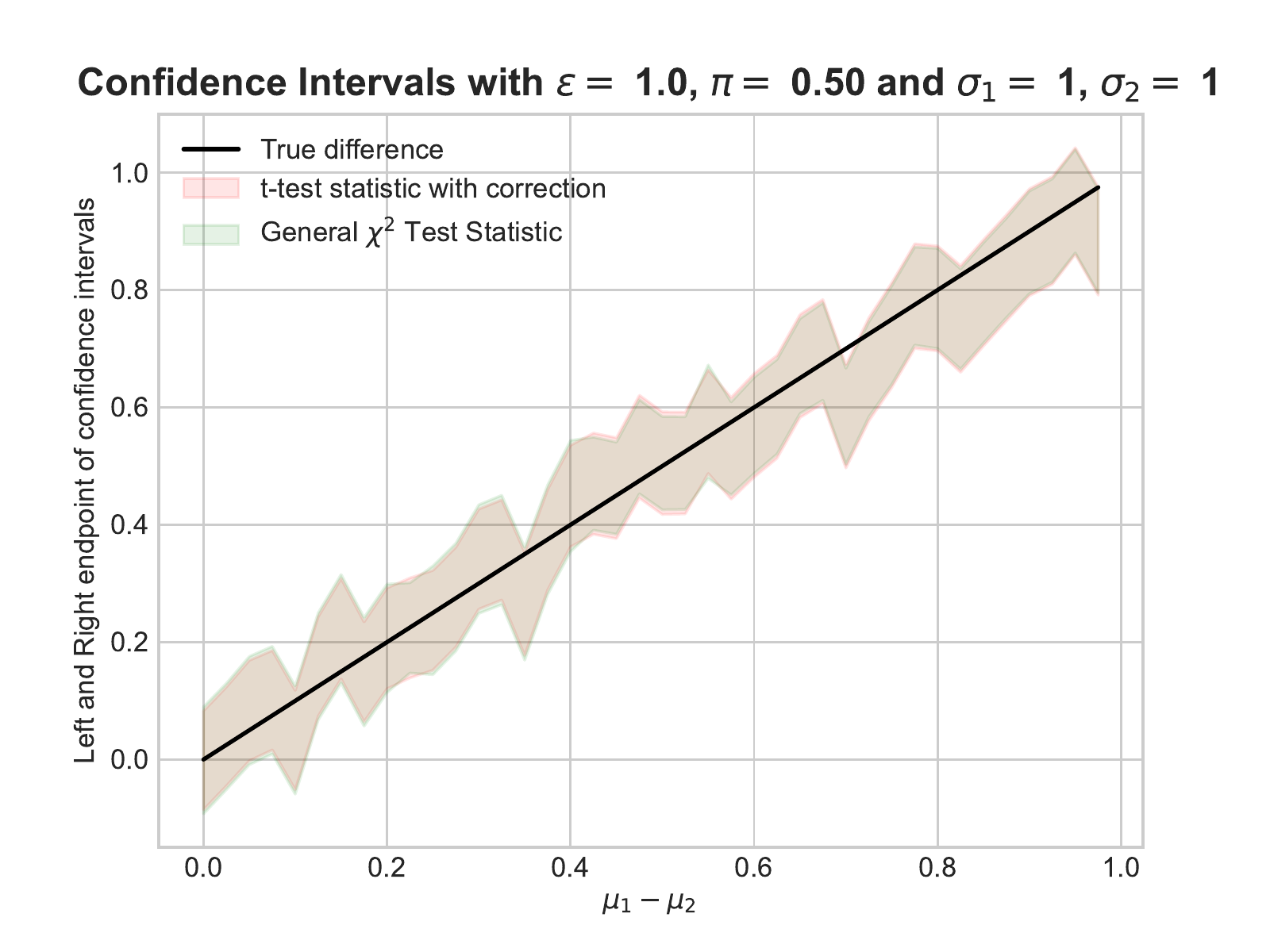}
\caption{Comparing confidence intervals using either the t-test or with the approach we outline for the $\chi^2$ statistic.  We use $\mu_1 = 0$,  $\epsilon = 1.0$, and $n = 10000$ for the plots and change $\sigma_1, \sigma_2$ as well as $\pi$.}
\label{fig:tTestConfidenceIntervals}
\end{center}
\end{figure}

We also present results in how often the various approaches provide confidence intervals that overlap the true difference in means in Figure~\ref{fig:tTestPropCI}.  In each case we privatize the data using randomized response on the group and compute confidence intervals using the classical t-test with a correction from \eqref{eq:DeltaEpsilon} and confidence intervals produced with the general $\chi^2$ approach.  We see that the general $\chi^2$ approach produces slightly more conservative confidence intervals as the privacy parameter increases and we consistently achieve close to the target $5\%$ proportion of missing the true difference in means.  
 \begin{figure}
  \begin{center}
\includegraphics[width=0.31\textwidth]{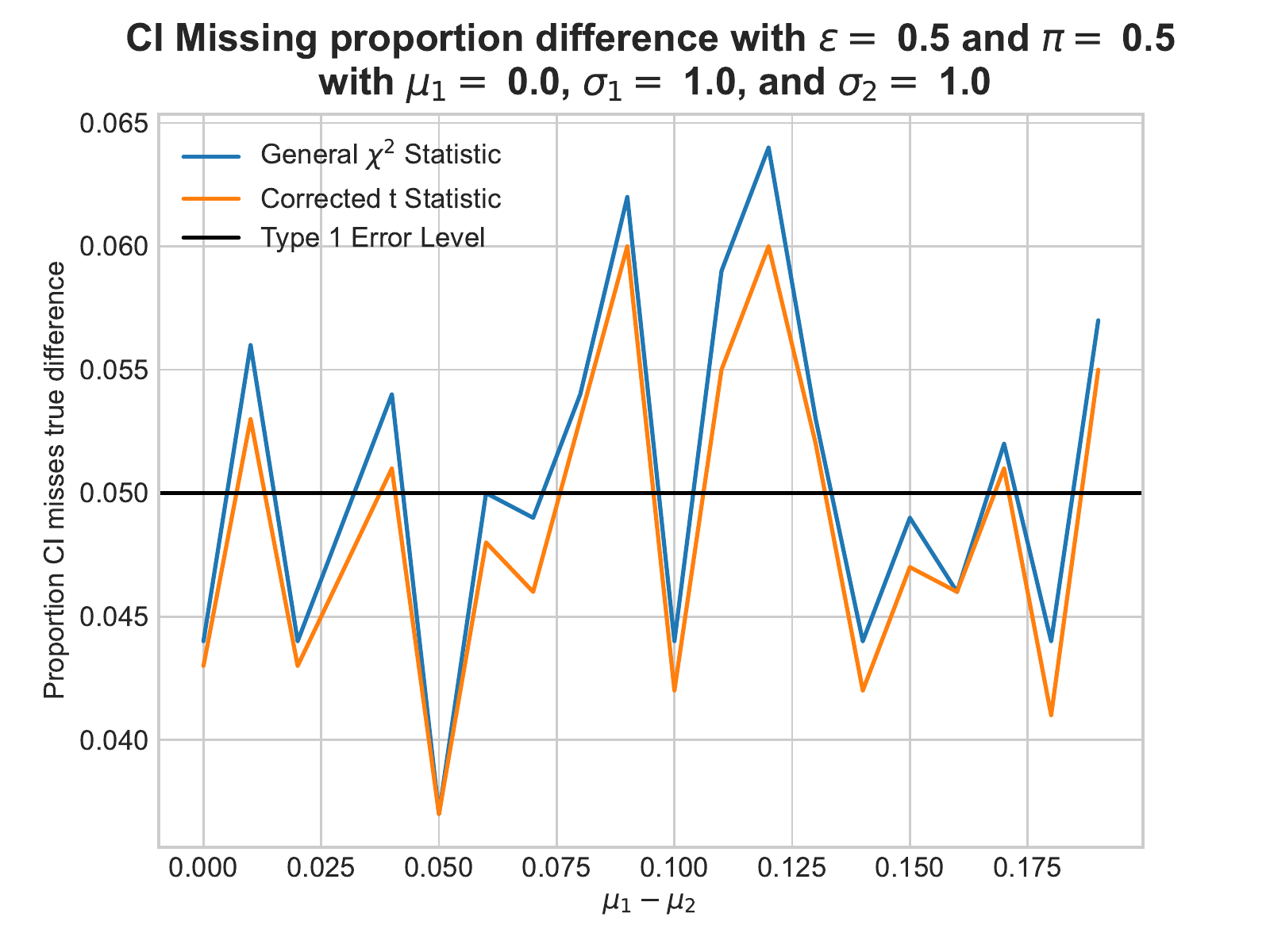}
\includegraphics[width=0.31\textwidth]{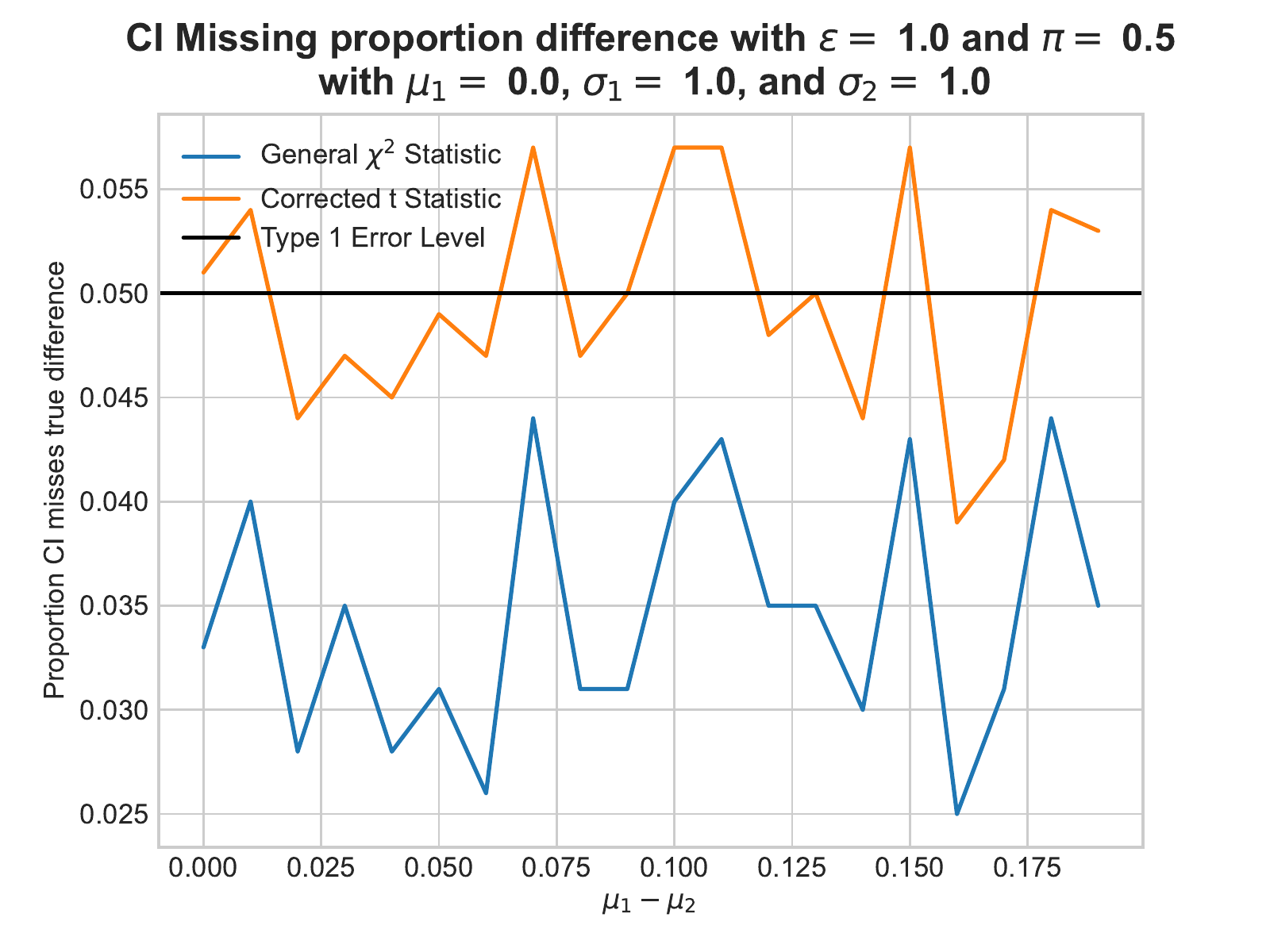}
\includegraphics[width=0.31\textwidth]{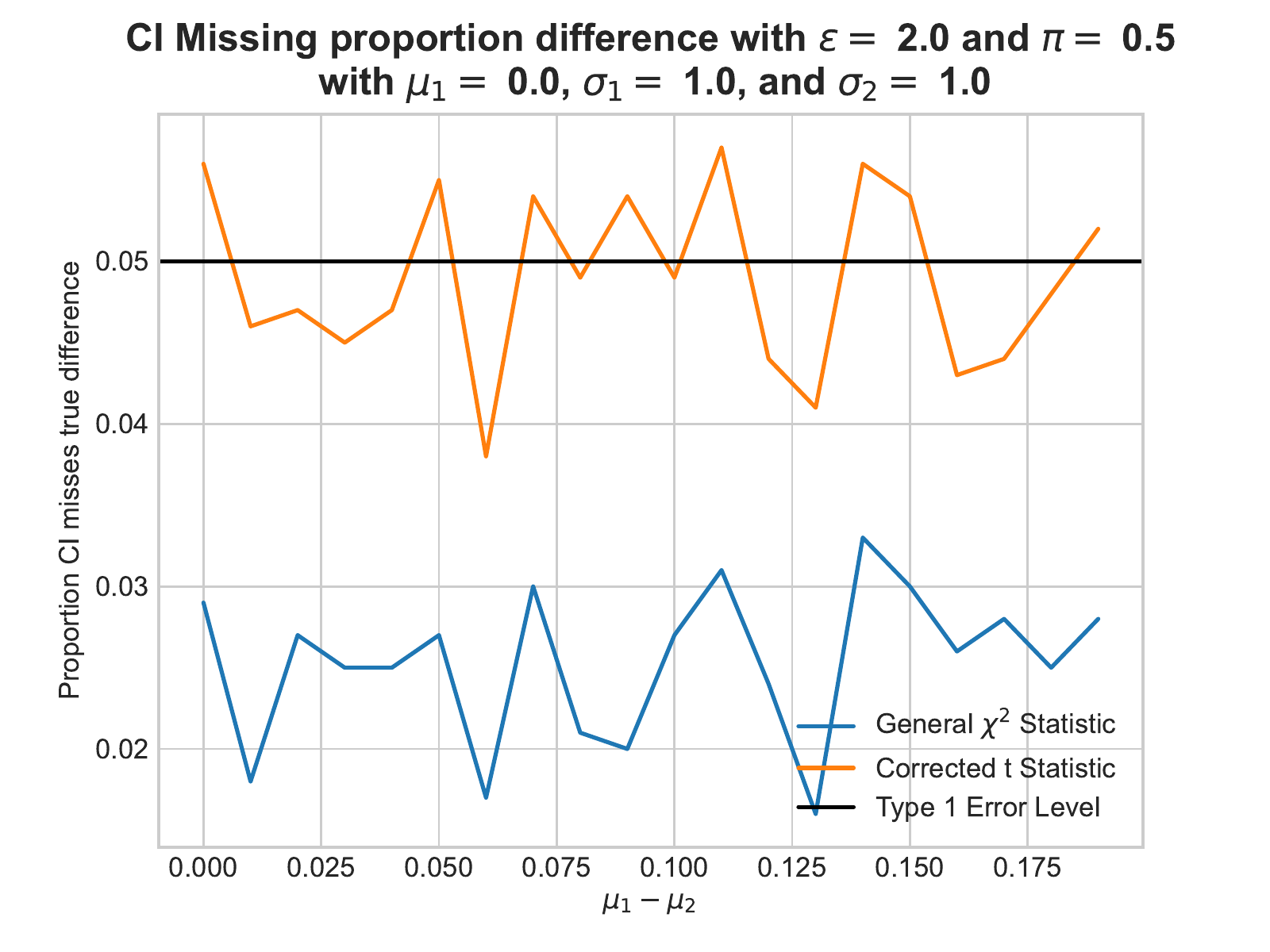}
\includegraphics[width=0.31\textwidth]{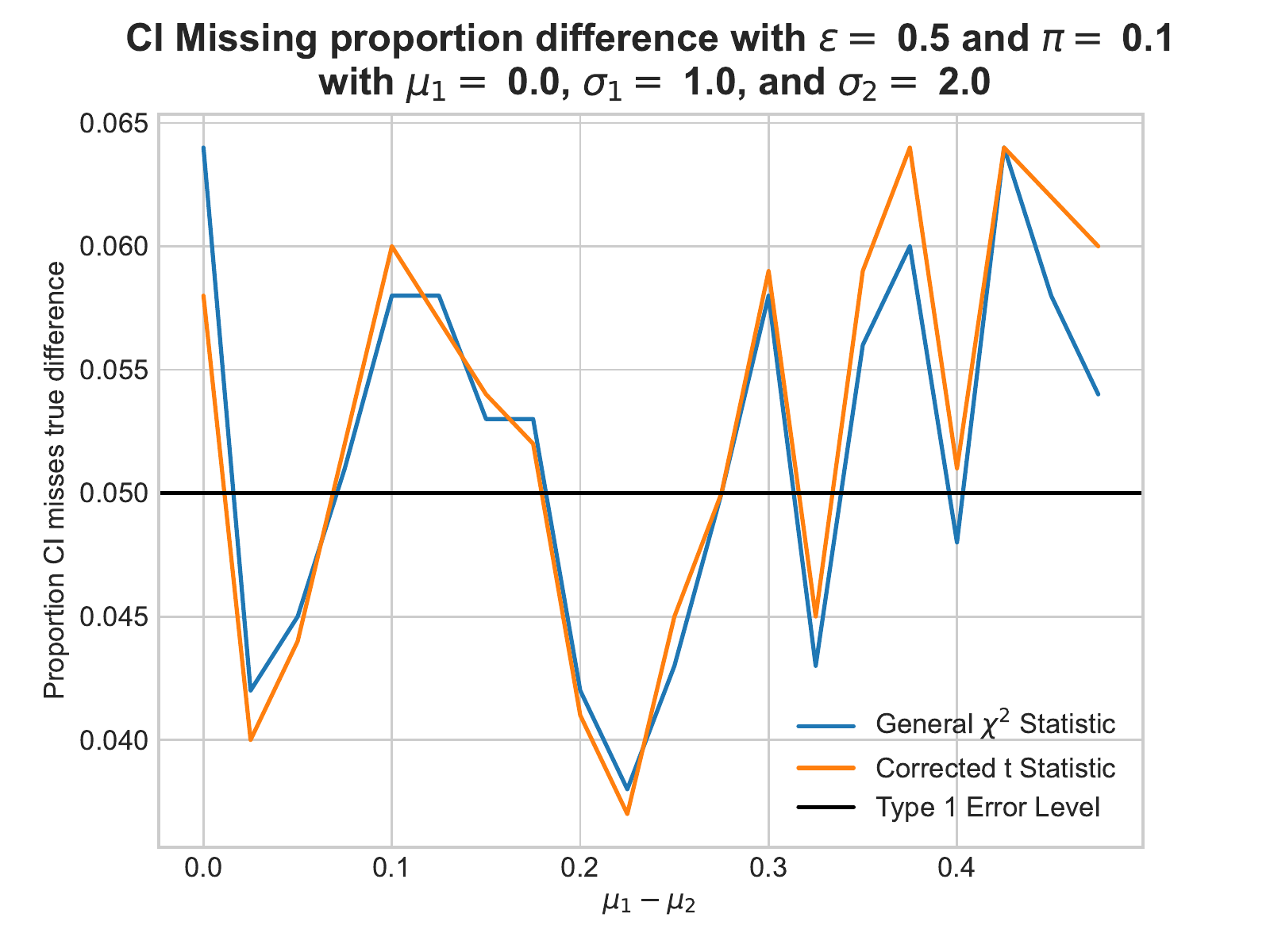}
\includegraphics[width=0.31\textwidth]{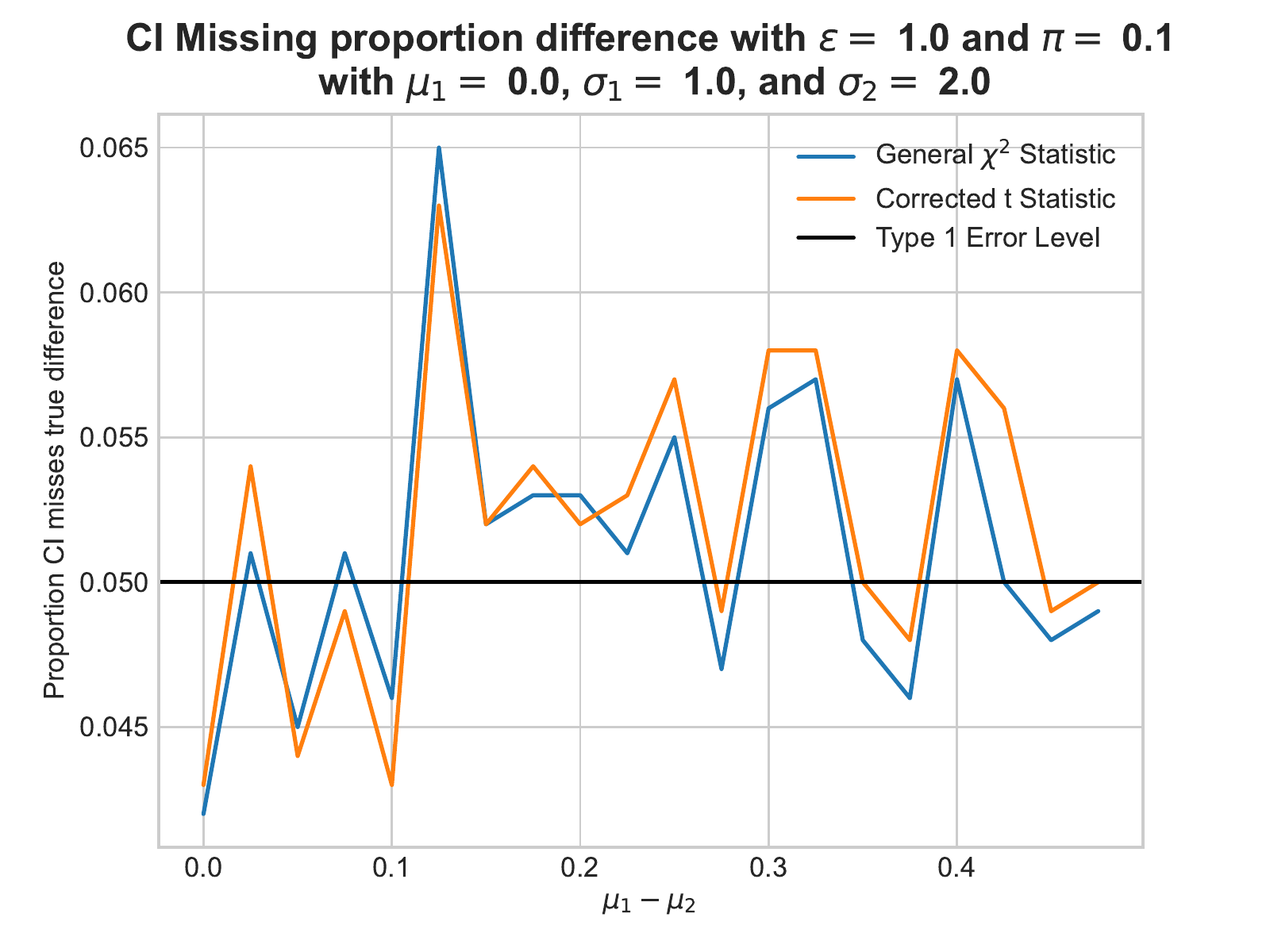}
\includegraphics[width=0.31\textwidth]{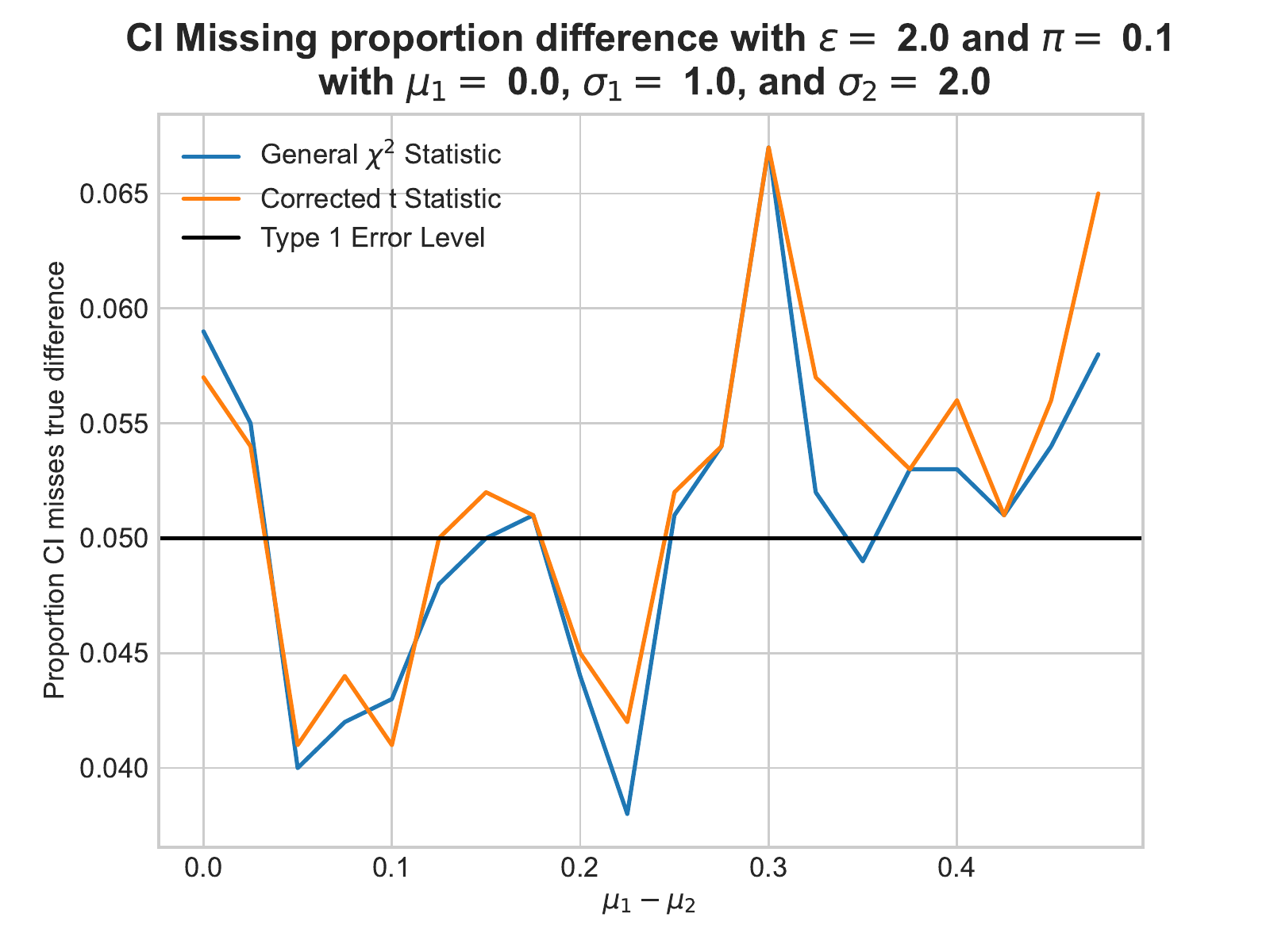}
\caption{We plot the proportion of times different tests will produce confidence intervals that miss the true difference $\mu_1 - \mu_2$.  The general $\chi^2$ test that accounts for the privacy mechanism consistently achieves the target type I error of $5\%$.  We compare our approach with using the classical t-test with correction in \eqref{eq:DeltaEpsilon}.  All plots use data size $n = 10000$ over 1000 trials.}
\label{fig:tTestPropCI}
\end{center}
\end{figure}

\section{Testing Differences in Means across Several Groups}

We now consider testing whether there is a difference in means across $g > 2$ groups.  That is, let $X_{i}[j]$ be sampled i.i.d. from $\Normal{\mu_j}{ \sigma^2_j}$ for $i \in [n]$ with $j \in [g]$ and we test $H_0: \mu_1 = \mu_2 = \cdots \mu_g$.  In this case, we would perform a one-way ANOVA test, which makes the assumption that all $\sigma_j$ are equal, i.e. $\sigma_j = \sigma$ for all $j \in [g]$.  The one-way ANOVA test compares $H_0$ to the alternative hypothesis $H_1$: not all means are equal. Observe that the one-way ANOVA does not allow us to conclude specifically {\it which} mean or means may be different across the groups, and would usually be followed up with either a set of t-tests or a method like Tukey's method to formalize the comparison, or a reasonable conclusion from clear deviations observed in the data \cite{rice2006mathematical}. 

It is straightforward to fit this hypothesis test to our generalized $\chi^2$ test framework by considering the variable $W_i$, which will determine the group that sample $i$ is in. That is, $W_i \sim \text{Multinomial}(n,\pi)$ for $i \in [n]$ and $\pi \in [0,1]^g$ is a probability vector.  We can then generalize Table~\ref{table:momentContingency} for multiple groups in Table~\ref{table:momentContingencyANOVA}, which we will use in our privacy model.    

\begin{table}[htbp]
\centering\setcellgapes{4pt}\makegapedcells
\begin{tabular}{ |c|c|c|c|c| } 
 \hline
 Sample Moments & \shortstack{Group $1$ \\ w.p. $\pi_1$} & \shortstack{Group $2$ \\ w.p. $\pi_2$} & $\cdots$ & \shortstack{Group $g$ \\ w.p. $\pi_g$} \\ 
 \hline
$0$-th  & $ \sum_{i=1}^n W_{i}[1]$ & $ \sum_{i=1}^n W_{i}[2]$ & $\cdots$ & $ \sum_{i=1}^n W_{i}[g]$ \\ 
 \hline
$1$-st & $ \sum_{i=1}^n W_{i}[1] \cdot X_{i}[1]$ & $ \sum_{i=1}^n W_{i}[2]  \cdot X_{i}[2]$ & $\cdots$ & $ \sum_{i=1}^n W_{i}[g] \cdot X_{i}[g]$ \\ 
 \hline
 $2$-nd & $ \sum_{i=1}^n W_{i}[1] \cdot X_{i}^2[1]$ & $ \sum_{i=1}^n W_{i}[2]\cdot  X_{i}^2[2]$ & $\cdots$ & $ \sum_{i=1}^n W_{i}[g] \cdot X_{i}^2[g]$ \\
 \hline
\end{tabular}
\caption{Contingency Table for continuous outcomes $\{X_{i}[j]\}_{i=1}^n \stackrel{i.i.d.}{\sim} \text{N}(\mu_j, \sigma^2_j)$ with $j \in [g]$ and group variable $\{ W_i\}_{i=1}^n \stackrel{i.i.d.}{\sim} \text{Multinomial}(1,\pi)$.} \label{table:momentContingencyANOVA}
\end{table}

We can then use our general $\chi^2$ framework to design a $\chi^2$ statistic based on the data in each cell of Table~\ref{table:momentContingencyANOVA} in a similar way to testing the difference in means.  Recall that for the difference in two means, we only considered the 0th and 1st sample orders in $Y$, which we will also consider.  We use the random vector $Y = \sum_{i}^n Y_i$ in our test and then compute the expectation of $Y_i = (Y_i^\diffp[1], Y_i^\diffp[2], \cdots, Y_i^\diffp[2g])^\intercal$ and its  covariance matrix. We then find estimates for the distribution parameters with common mean $\mu_j = \mu$ for all $j \in [g]$, group probability $\pi$, and standard deviations $\sigma_j$ for $j \in [g]$, which will be used in the covariance matrix in place of the population parameters.  
\[
Y = \sum_{i=1}^n Y_i = 
\sum_{i=1}^n 
\begin{pmatrix}
W_{i}[1] &
 W_{i}[2] &
\dots &
W_{i}[g-1] &
W_{i}[1]  \cdot X_{i}[2] &
W_{i}[2] \cdot X_{i}[2] &
 \dots &
 W_{i}[g]  \cdot X_{i}[g] 
\end{pmatrix}^T .
\]

After minimizing the resulting $\chi^2$ statistic over $\mu \in \R$ and $\pi \in [0,1]^g$ such that $\sum_{i=1}^g \pi_j = 1$, we then compare the statistic to a $\chi^2$ distribution with $g-1$ degrees of freedom.  Figure~\ref{fig:nonPrivANOVA} shows that the $\chi^2$ approach performs equivalently to the traditional one-way ANOVA test with equal variance $\sigma^2 = \sigma_j^2$ for $j \in [g]$.  

\begin{figure}
\begin{center}
  \includegraphics[width=0.45\linewidth]{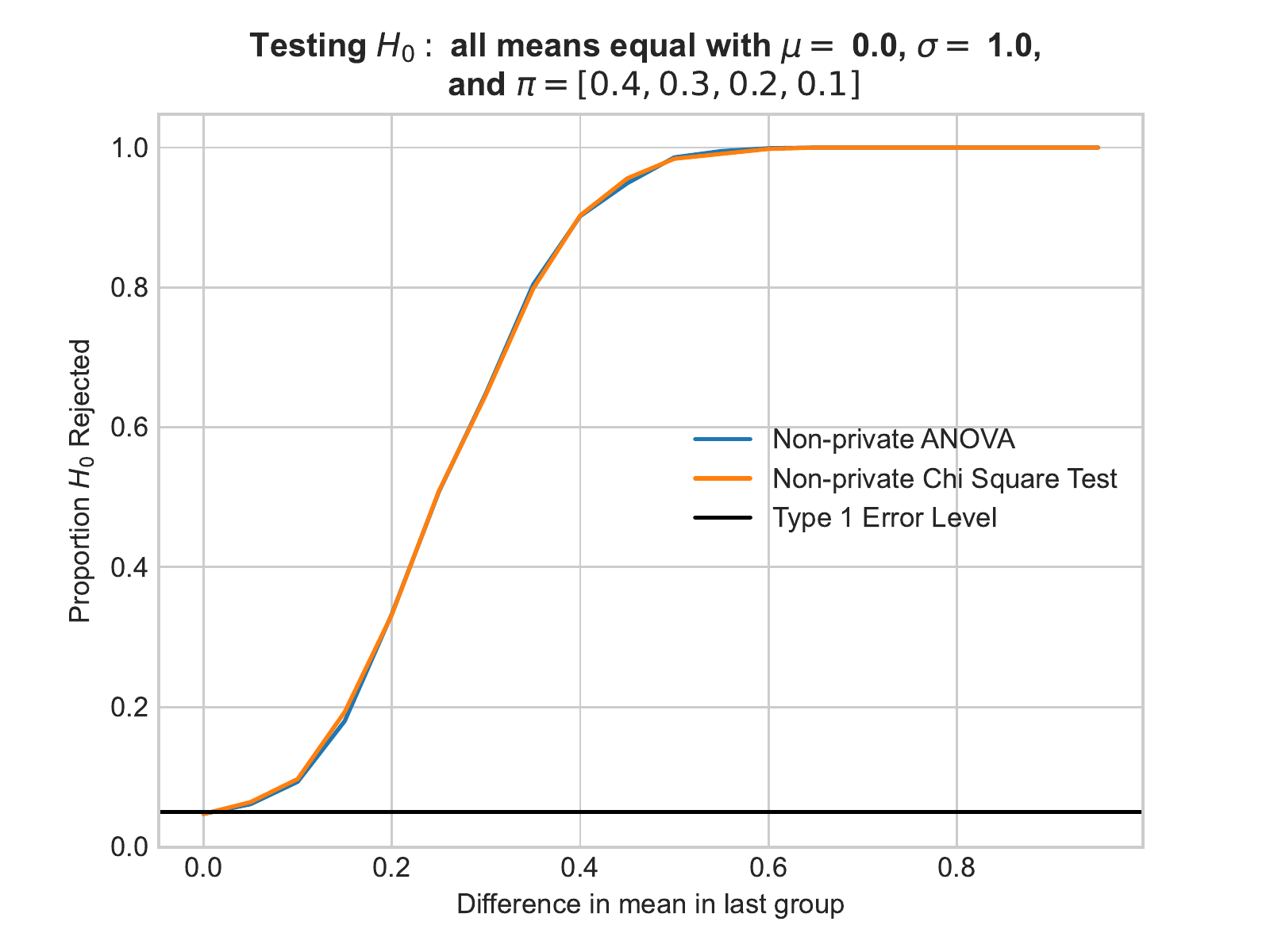}
  \includegraphics[width=0.45\linewidth]{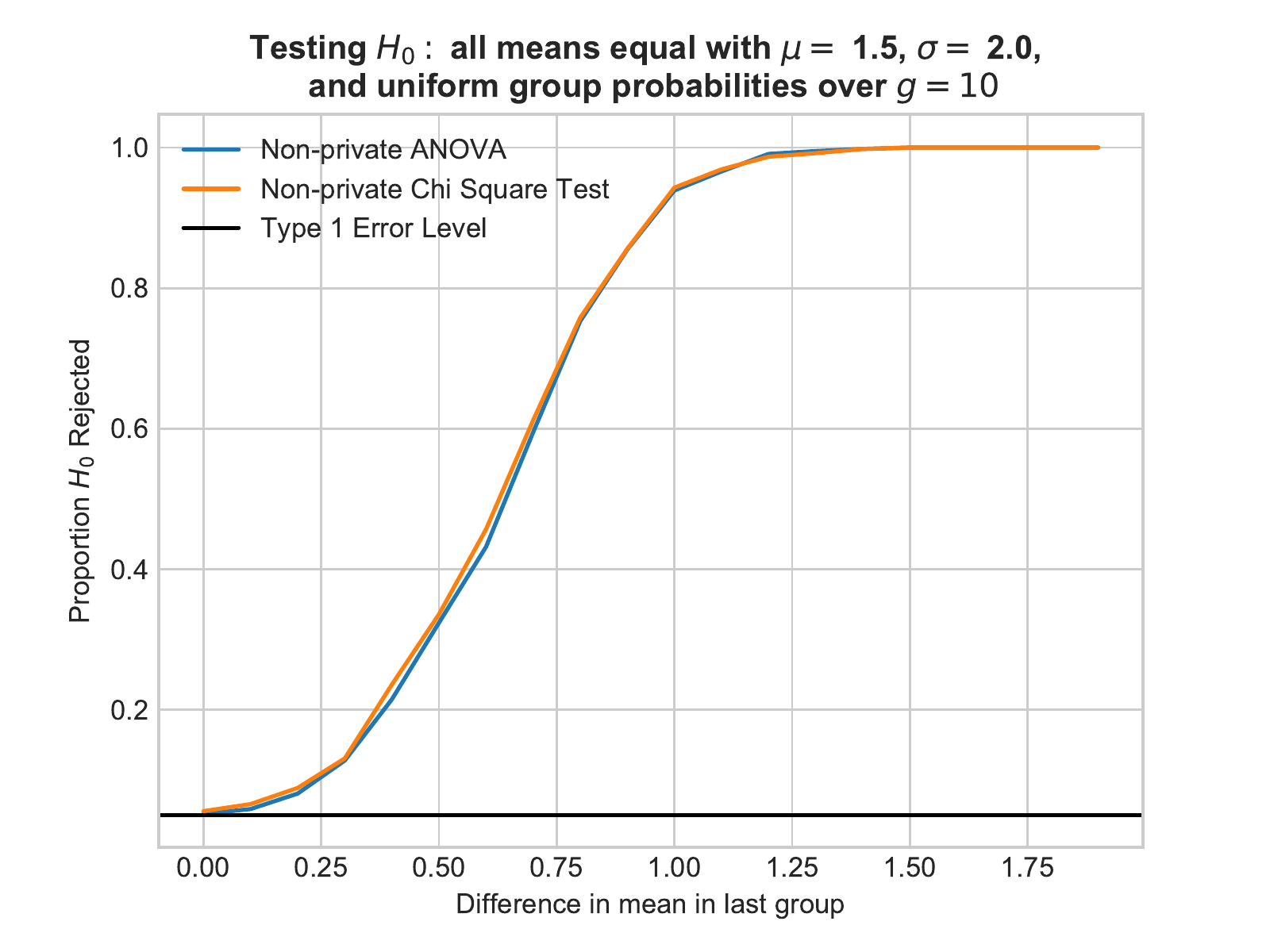}
  \caption{Comparing power curves for one-way ANOVA and the general $\chi^2$ approach with $n = 1000$ samples.}
  \label{fig:nonPrivANOVA}
\end{center}
\end{figure}

To then introduce privacy of the group of each sample, but not the outcome, we will include the random matrix $Z_i^\diffp \in \{0,1 \}^{g\times g}$, where column $j$ corresponds to the outcome for $M(j)$ for various local DP mechanisms $M$.  Hence, we can write out the privatized vector $Y^\diffp = \sum_{i=1}^n Y_i^\diffp$ in terms of $Z_i^\diffp$ as we did for binary outcomes in \eqref{eq:privY}, where $\mathbf{0}$ is the $g\times g$ zero matrix.
\begin{equation}
Y_i^\diffp=  \begin{bmatrix}
			Z_i^\diffp & \mathbf{0} \\
			\mathbf{0} & Z_i^\diffp
			\end{bmatrix} 
\begin{pmatrix}
W_{i}[1] &
 W_{i}[2] &
\dots &
W_{i}[g] &
W_{i}[1] \cdot X_{i}[1] &
W_{i}[2] \cdot X_{i}[2] &
 \dots &
 W_{i}[g]\cdot X_{i}[g] 
\end{pmatrix}^T 
\label{eq:privYANOVA}
\end{equation}
Note that we include the coordinate for $W_{i}[g]$ because the privacy mechanism will modify the probability of being in the last group.  For the $g$-randomized response mechanism, we can eliminate this entry as the first $g$ coordinates of $Y_i^\diffp$ will still form a multinomial distribution.  However, for the bit flip mechanism, we will need to keep  the full $2g$ dimensional vector, in which case we compare the resulting statistic to a $\chi^2$ distribution with $g$ degrees of freedom, one more than the non-private test.  Lastly, for the subset mechanism, the resulting covariance matrix of $Y_i^\diffp$ will have the vector comprising of ones in the first $g$ coordinates and zeros in the second $g$ coordinates be in its null space.  We will cover each in turn in this section.

For each mechanism, we need to compute the expected vector $\vec{\theta}^\diffp(\pi, \mu)$ under the null hypothesis and the covariance matrix $C(\pi, \mu, \sigma_1^2, \cdots, \sigma_g^2; \diffp)$.   Note that the mean vector $\vec{\theta}^\diffp(\pi, \mu) \in \R^{g\times g}$ will have the same form as the vectors in Section~\ref{sect:independence}, except the first $g$ entries will have success probability $p = 1$ and the second $g$ entries will have $\mu$ instead of $1-p$ multiplied.  
We will be able to write the covariance matrix in the following block form with matrices $\Sigma, \Sigma' \in \R^{g\times g}$,
\begin{equation}\label{eq:generalCovar}
C(\pi, \mu, \sigma_1^2, \cdots, \sigma_g^2; \diffp) = 
\begin{bmatrix}
\Sigma & \mu \Sigma \\
\mu \Sigma & \Sigma'
\end{bmatrix}
\end{equation}
Note that $\Sigma$ will simply be the top left $g \times g$ submatrix of the covariance matrix for each mechanism in Section~\ref{sect:independence}, with probability of success $p = 1$.  That is for $j ,\ell \in [g]$,
\[
\Sigma[j,\ell] = \sum_{m =1}^g  \E[ W_i[m] \cdot Z_i^\diffp[j,m] \cdot Z_i^\diffp[\ell,m] ] -  \E[Y_i^\diffp][j]  \cdot \E[Y_i^\diffp][\ell] 
\]
Further, we can compute $\Sigma'$ in terms of the expectation of $Y_i^\diffp$ and the specific mechanism we use for $j, \ell \in[g]$
\begin{align*}
\Sigma'[j,\ell] & =  \sum_{m =1}^g (\mu^2 + \sigma^2_m) \E[ W_i[m] \cdot Z_i^\diffp[j,m] \cdot Z_i^\diffp[\ell,m] ] - \mu^2 \E[Y_i^\diffp][j]  \cdot \E[Y_i^\diffp][\ell] 
\end{align*} 

We also need to form estimates for the population parameters in the covariance matrix, including $\mu, \pi, \sigma_1, \cdots, \sigma_g$.  We will use our sample and the particular privacy mechanism to form these estimates.  Note that if our estimates result in an expected group size to be less than 5, then we will simply fail to reject the null hypothesis, as we have stated earlier. We now cover each mechanism below.

\subsection{Randomized Response}
We first cover the $g$-randomized response mechanism we have $Y_i^\diffp = (Y_i^\diffp[1], \cdots, Y_i^\diffp[2g])^\intercal$, which will have the following mean vector
\[ 
\vec{\theta}^\diffp(\pi, \mu) = \E[Y_i^\diffp]= \begin{pmatrix}
\frac{1}{e^\diffp + g - 1} \cdot \begin{bmatrix}
e^\diffp & 1 & \cdots & 1 \\
1 & e^\diffp & \cdots & 1 \\
 &  & \ddots &  \\
1 & 1 & \cdots & e^\diffp 
	\end{bmatrix} \pi \\	
\\	
\frac{\mu}{e^\diffp + g -1}\cdot \begin{bmatrix}
e^\diffp & 1 & \cdots & 1 \\
1 & e^\diffp & \cdots & 1 \\
 &  & \ddots &  \\
1 & 1 & \cdots & e^\diffp 
	\end{bmatrix} \pi \\
\end{pmatrix}
\]
Using our general form of the covariance matrix in \eqref{eq:generalCovar} we can simplify the terms for $\Sigma[j,\ell]$ due to $Z_i^\diffp[j,m] \cdot Z_i[\ell, m] = 0$ for any $\ell \neq j$ and is 1 otherwise.  We also need to form estimates to use in the $\chi^2$ statistic, which gives us
\[
\hat{\mu} = \frac{\sum_{j \in [g]} \sum_{i=}^nY_i^\diffp[g+j]}{n}, \qquad \hat{\pi} = \left(  (e^\diffp + g-1) \left(\frac{\frac{\sum_{i=1}^nY_i^\diffp[j]}{n} - \frac{1}{e^\diffp + g-1}}{e^\diffp - 1} \right): j \in [g] \right).
\]
We also need to estimate the variance for each group, so we will use the sample standard deviation for each group, i.e. $\{Y_i^\diffp[g+1:2g] : i \in [n] \}$ and use this for the main diagonal of $\Sigma'$.  The terms of $\Sigma'$ on the off diagonal will only consist of terms $- \E[ Y_i^\diffp][j] \cdot \E[ Y_i^\diffp][\ell]$ for $j \neq \ell$.

We also point out that for randomized response, the first $g$ entries will still follow a multinomial distribution, where only the first $g-1$ entries are needed.  Hence, we remove the $g$-th entry in $Y_i^\diffp$ while also removing the $g$-th row and column of the covariance matrix.  The result is then a covariance matrix of rank at most $2g - 1$ and we optimize over $g$ variables, with $\pi_1, \cdots, \pi_{g-1}$ and $\mu$.  We then form the $\chi^2$-statistic and compare it to a $\chi^2$ distribution with $g-1$ degrees of freedom.  

\subsection{Bit Flipping}

We next turn to the bit flipping mechanism, where it is possible for a sample to be in multiple groups simultaneously.  We first compute the expected vector of $Y_i^\diffp$, as we did for the binary outcome case in Section~\ref{sect:BitFlipBinaryOutcomes}
\[
\vec{\theta}^\diffp(\pi,\mu) = 
\begin{pmatrix}
\frac{1}{e^{\diffp/2} +1} \cdot \begin{bmatrix}
e^{\diffp/2} & 1 & \cdots & 1 \\
1 & e^{\diffp/2} & \cdots & 1 \\
 &  & \ddots &  \\
1 & 1 & \cdots & e^{\diffp/2} 
	\end{bmatrix} \pi \\ \\
	\frac{\mu}{e^{\diffp/2} +1} \cdot\begin{bmatrix}
e^{\diffp/2} & 1 & \cdots & 1 \\
1 & e^{\diffp/2} & \cdots & 1 \\
 &  & \ddots &  \\
1 & 1 & \cdots & e^{\diffp/2} 
	\end{bmatrix} \pi
\end{pmatrix}
\]
We next compute the covariance matrix $\Sigma'$, which consists of terms with $\E[Z_i^\diffp[j,m] Z_i^\diffp[\ell,m]]$ for $j,\ell, m \in [g]$.  From the bit flip mechanism, we know each coordinate of $Z_i^\diffp$ is independent of each other, so we have for $j \neq \ell$
\[
\E[Z_i^\diffp[j,m] Z_i^\diffp[\ell,m]] = \E[Z_i^\diffp[j,m] ] \cdot \E[Z_i^\diffp[\ell,m]] = \left\{ \begin{array}{lr} 
																\tfrac{1}{(e^{\diffp/2} + 1)^2} & j,\ell \neq m \\
																\tfrac{e^{\diffp/2}}{(e^{\diffp/2} + 1)^2} & j = m, \text{ or } \ell = m
															\end{array}\right.
\]
and if $j = \ell$
\[
\E[Z_i^\diffp[j,m] Z_i^\diffp[\ell,m]] = \E[Z_i^\diffp[j,m] ] = \left\{ \begin{array}{lr} 
																\tfrac{1}{e^{\diffp/2} + 1} & j \neq m \\
																\tfrac{e^{\diffp/2}}{e^{\diffp/2} + 1} & j = m
															\end{array}\right.
\]

We now form estimates for the population parameters in the covariance matrix.  
\[
\hat{\mu} = (e^{\diffp/2}+1) \cdot \frac{\sum_{i=1}^n \sum_{j=1}^g Y_i^\diffp[g+j]}{n (e^{\diffp/2} + (g-1) )}, \quad \hat{\pi} = \left( (e^{\diffp/2}+1) \cdot \left(\frac{\tfrac{ \sum_{i=1}^n Y_i^\diffp[j] }{n} - \tfrac{1}{e^{\diffp/2} + 1} }{e^{\diffp/2} - 1}\right) : j \in [g]\right)
\]
Next, we need to form an estimate for the variance.  To help simplify things, we will assume equal variance across groups in our estimate, i.e $\sigma_j = \sigma$ for all $j \in [g]$.  We point out that unequal variances can be used, but it just complicates the estimate we use.  Further, we will write $s_j^2$ to denote the sample variance computed within each group, so that $s_j^2$ is an estimate for the variance of $Y_i^\diffp[g+j]$.  This gives us the following estimate for $\sigma$,
\[
\hat{\sigma}^2 = (e^{\diffp/2} + 1) \cdot \frac{\sum_{j=1}^g s_j^2 - \hat{\mu}^2\left( \tfrac{e^{\diffp/2}}{e^{\diffp/2}+1} + (g-1) \tfrac{1}{e^{\diffp/2}+1} - \sum_{j=1}^g( \pi_j \tfrac{e^{\diffp/2}}{e^{\diffp/2}+1} + (1-\pi_j) \tfrac{1}{e^{\diffp/2}+1} )^2 \right) }{e^{\diffp/2} + (g-1)}.
\]

We now use the covariance matrix with these estimates in the general $\chi^2$ statistic and compare it with a $\chi^2$ distribution with $g$ degrees of freedom.  Note that there is an extra degree of freedom compared to the previous test and the non-private version.  This is because the covariance matrix may be full rank and some combination of elements in $Y_i^\diffp$ cannot be used to determine other elements, as was possible when we had the $g$-randomized response mechanism.  Hence, the covariance matrix is of rank at most $2g$ and we are minimizing over $g$ variables $\pi_1, \cdots, \pi_g, \mu$.  
\begin{remark}
Recall that in the binary outcome case, we were using a technique from \cite{KiferRo17, GaboardiRo18} to \emph{project out} the eigenvector associated with noise, which reduced the degrees of freedom by 1.  A similar approach cannot be adopted here, as in the non-private case, the vector whose first $g$ coordinates are 1 and the latter $g$ coordinates are zero is in the null space of the covariance matrix, but the same vector is no longer an eigenvector of the covariance matrix after applying the bit flipping mechanism for general $\mu$.  
\end{remark}

\subsection{Subset Mechanism}

Lastly we cover the subset mechanism, which was shown to be the most powerful of the other privacy mechanisms for various privacy levels in the binary outcomes case.  To ease notation, we will assume the variance is equal across all groups, i.e. $\sigma_j = \sigma$ for all $j \in [g]$.  We follow the same procedure as for the other mechanisms, where we first give the expected value of $Y_i^\diffp$, denoted as $\vec{\theta}^\diffp(\pi, p;k)$ where the random entries in $Z_i^\diffp$ from \eqref{eq:privYANOVA} come from the subset mechanism with parameter $k$.  
\[
\vec{\theta}^\diffp(\pi, p;k) = 
\begin{pmatrix}
\frac{1}{{g-1\choose k-1} e^\diffp + {g-1\choose k} } 
\begin{bmatrix}
{g-1 \choose k-1}e^\diffp & \left( {g-2 \choose k-2}e^\diffp + {g-2\choose k-1} \right) & \cdots & \left( {g-2 \choose k-2}e^\diffp + {g-2\choose k-1} \right) \\
\left( {g-2 \choose k-2}e^\diffp + {g-2\choose k-1} \right) & {g-1 \choose k-1}e^\diffp & \cdots & \left( {g-2 \choose k-2}e^\diffp + {g-2\choose k-1} \right) \\
 &  & \ddots &  \\
\left( {g-2 \choose k-2}e^\diffp + {g-2\choose k-1} \right) & \left( {g-2 \choose k-2}e^\diffp + {g-2\choose k-1} \right) & \cdots & {g-1 \choose k-1}e^\diffp
	\end{bmatrix} \pi \\ \\
	\frac{\mu}{{g-1\choose k-1} e^\diffp + {g-1\choose k} } 
	\begin{bmatrix}
{g-1 \choose k-1}e^\diffp & \left( {g-2 \choose k-2}e^\diffp + {g-2\choose k-1} \right) & \cdots & \left( {g-2 \choose k-2}e^\diffp + {g-2\choose k-1} \right) \\
\left( {g-2 \choose k-2}e^\diffp + {g-2\choose k-1} \right) & {g-1 \choose k-1}e^\diffp & \cdots & \left( {g-2 \choose k-2}e^\diffp + {g-2\choose k-1} \right) \\
 &  & \ddots &  \\
\left( {g-2 \choose k-2}e^\diffp + {g-2\choose k-1} \right) & \left( {g-2 \choose k-2}e^\diffp + {g-2\choose k-1} \right) & \cdots & {g-1 \choose k-1}e^\diffp
	\end{bmatrix} \pi
\end{pmatrix}
\]

We write the $j, \ell$ entry of the submatrix $\Sigma' = \E[Y_i^\diffp[g+j] \cdot Y_i^\diffp[g+\ell]] - \E[Y_i^\diffp] [g+j] \cdot  \E[Y_i^\diffp][\ell]$, where the first term can be computed with $j = \ell$
\[
\E[Y_i^\diffp[g+j]^2]= (\mu^2 + \sigma^2) \frac{{g-1 \choose k-1}e^\diffp\pi_j + \left( {g-2 \choose k-2}e^\diffp  + {g-2 \choose k-1}\right) (1-\pi_j)}{{g-1\choose k-1} e^\diffp + {g-1\choose k} } ,  \qquad \forall j \in [g].\\
\]
and for $\ell \neq j$,
\begin{align*}
& \E[Y_i^\diffp[g+j] Y_i^\diffp[g+\ell]] \\
& \qquad  =\frac{(\mu^2 + \sigma^2)}{{g-1\choose k-1} e^\diffp + {g-1\choose k} }  \left( e^{\diffp} {g-2\choose k-2}(\pi_j +\pi_\ell) + \left(e^\diffp {g-3\choose k-3} + {g-3\choose k-2} \right) (1-\pi_j - \pi_\ell) \right)
\end{align*}

We make the following observation about the covariance matrix of $Y_i^\diffp$ that will show that it is not full rank, and hence we will lose a degree of freedom in the asymptotic $\chi^2$ distribution of the test statistic.  

\begin{lemma}
The covariance matrix $C(\pi, \mu, \sigma^2_1, \cdots, \sigma^2_g; \diffp, k)$ of $Y_i^\diffp$ in \eqref{eq:privYANOVA} for the subset mechanism has a nontrivial null space.
\end{lemma}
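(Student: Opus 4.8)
The plan is to verify directly that the vector $v = (\mathbf{1}_g^\intercal, \mathbf{0}_g^\intercal)^\intercal \in \R^{2g}$, whose first $g$ coordinates are one and whose last $g$ coordinates are zero, lies in the null space of $C(\pi, \mu, \sigma^2_1, \cdots, \sigma^2_g; \diffp, k)$. Using the block decomposition recorded in \eqref{eq:generalCovar}, where the top-left and bottom-left blocks are $\Sigma$ and $\mu\Sigma$ respectively, one has $Cv = (\Sigma\mathbf{1}_g, \ \mu\Sigma\mathbf{1}_g)^\intercal$. Hence the whole claim collapses to showing that the top block $\Sigma$ annihilates $\mathbf{1}_g$; the lower block vanishes automatically once this is established, and $v \neq \mathbf{0}$ then witnesses the nontrivial null space.

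To show $\Sigma\mathbf{1}_g = \mathbf{0}_g$, I would sum the stated expression for $\Sigma[j,\ell]$ over $\ell$ and invoke the single defining feature of the subset mechanism: every column of $Z_i^\diffp$ contains exactly $k$ ones, so $\sum_{\ell=1}^g Z_i^\diffp[\ell,m] = k$ deterministically for each $m$. Pulling this identity inside the expectation collapses the quadratic piece $\sum_\ell \sum_m \E[W_i[m]\, Z_i^\diffp[j,m]\, Z_i^\diffp[\ell,m]]$ to $k\,\E[Y_i^\diffp][j]$, while the same column-sum property combined with $\sum_m W_i[m] = 1$ (since $W_i$ is a single multinomial trial) gives $\sum_\ell \E[Y_i^\diffp][\ell] = k$. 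The $j$-th row sum is then $k\,\E[Y_i^\diffp][j] - \E[Y_i^\diffp][j]\cdot k = 0$, as desired. This mirrors the structure of the argument for Lemma~\ref{lem:singularCovar}, except that the relevant cancellation is driven purely by the constant column weight $k$ rather than by the detailed binomial-coefficient entries.

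The step demanding the most care is recognizing \emph{which} vector survives. Unlike the binary/independence setting of Lemma~\ref{lem:singularCovar}, where the full vector $\mathbf{1}_{2g}$ was in the null space, here the lower block $\Sigma'$ carries the factor $\mu^2 + \sigma^2$ inherited from the second moments of the continuous outcomes, so $\Sigma'\mathbf{1}_g \neq \mathbf{0}_g$ in general and $\mathbf{1}_{2g}$ is \emph{not} annihilated. The proof must therefore route entirely through the top block $\Sigma$ and exploit that the moment-bearing coordinates are placed in the second half of $Y_i^\diffp$. A minor additional check worth stating explicitly is that $v$ is a genuine null vector and not merely an approximate one; since the column-sum identity holds exactly (not in expectation), the cancellation is exact and no asymptotic argument is needed. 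This also makes clear that the only mechanism-specific input is the column-sum-$k$ property, so the reasoning transfers verbatim to any privatization matrix whose columns sum to a fixed constant.
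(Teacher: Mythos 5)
Your proof is correct, and its skeleton coincides with the paper's: you pick the same witness vector $(\mathbf{1}_g^\intercal, \mathbf{0}_g^\intercal)^\intercal$, use the same block decomposition \eqref{eq:generalCovar} with top-left block $\Sigma$ and off-diagonal blocks $\mu\Sigma$, and reduce everything to $\Sigma\mathbf{1}_g = \mathbf{0}$. Where you genuinely diverge is in how that key fact is established. The paper does not recompute anything: it observes that $\Sigma$ is the covariance matrix of $Z_i^\diffp W_i$, which is exactly the matrix analyzed in Lemma~\ref{lem:singularCovar} specialized to success probability $p=1$, and imports the conclusion from there --- so its justification ultimately rests on the binomial-coefficient identities $x\binom{x-1}{y-1} = y\binom{x}{y}$ and $x\binom{x-1}{y} = (y+1)\binom{x}{y+1}$ worked out in that earlier proof. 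You instead give a self-contained argument from the single structural fact that every output of the subset mechanism lies in $\cY_k$, i.e.\ each column of $Z_i^\diffp$ sums to exactly $k$ deterministically; combined with $\sum_m W_i[m]=1$, this makes each row sum of $\Sigma$ collapse to $k\,\E[Y_i^\diffp][j] - k\,\E[Y_i^\diffp][j] = 0$ with no combinatorial bookkeeping. Your route buys two things: it is more elementary (the exact cancellation is visible as a consequence of the deterministic linear constraint $\sum_{\ell} Y_i^\diffp[\ell] = k$ on the first $g$ coordinates, so the covariance must annihilate the corresponding direction), and it is more general, applying verbatim to any privatization matrix with constant column weight rather than to the subset mechanism's specific transition probabilities. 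The paper's route buys economy, reusing a lemma it already proved. Your explicit remark that $\mathbf{1}_{2g}$ is \emph{not} annihilated here --- because $\Sigma'$ carries the $\mu^2+\sigma^2$ second-moment factor --- is also correct and worth making; the paper leaves this implicit in its choice of vector.
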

\begin{proof}
We can write the covariance matrix as  a block matrix, where we will not care about the bottom right $g\times g$ block, since it will not be touched with the vector $(1, \cdots, 1, 0, \cdots, 0)$.  Let $\Sigma$ be the covariance matrix of $Z_i^\diffp W_i$, which we have actually computed in Lemma~\ref{lem:singularCovar} with success probability $p = 1$.  The top left $g\times g$ block matrix in $C(\cdot)$ will then be $\Sigma$, which has the all ones vector in its null space.  Furthermore, the bottom left block matrix (as well as the top right, since the covariance matrix is symmetric) will be $\mu\Sigma$ where $\mu \in \R$ is the common mean across all groups under the null hypothesis.   This gives us what we need for the lemma statement.
\[
C(\pi, \mu, \sigma^2_1, \cdots, \sigma^2_g; \diffp) 
\begin{pmatrix}
1 \\
\vdots \\
1 \\
0 \\
\vdots \\
 0
\end{pmatrix} = 
\begin{bmatrix}
\Sigma \mathbf{1} + \mu \Sigma \mathbf{0} \\
\mu \Sigma \mathbf{1}  + 0
\end{bmatrix} = 0
\]
\end{proof}

We now turn to computing estimates for the population parameters to plug into our covariance matrix.
\begin{align*}
\hat{\mu} & = \frac{\sum_{j=1}^g \sum_{i=1}^n Y_i^\diffp[g+j]}{n k}, \\
\hat{\pi} & = \left( \frac{ \left( {g-1\choose k-1} e^\diffp + {g-1\choose k} \right)\cdot \left( \frac{\sum_{i=1}^nY_i^\diffp[j]}{n} \right)- \left(e^\diffp {g-2\choose k-2} + {g-2\choose k-1} \right) }{e^\diffp \left({ g-1 \choose k-1 } - { g-2 \choose k-2 } \right) - { g-2 \choose k-1 } } : j \in [g]\right) . 
\end{align*}

Lastly, we need to estimate the variance for each group, which we will again assume for ease of notation that $\sigma_j = \sigma$ for all groups $j \in [g]$.  We will use the sample variance $s_j^2$ for $Y_i^\diffp[g+j]$ within each group $j \in [g]$.  We then use the following estimate
\[
\hat{\sigma}^2 = \frac{\sum_{j=1}^g s_j^2 - \mu^2 \left( k -  \tfrac{1}{\left( {g-1 \choose k-1} e^\diffp + {g-1 \choose k} \right)^2} \sum_{j=1}^g\left( {g-1 \choose k-1} e^\diffp \pi_j  + (1- \pi_j) \left( {g-2 \choose k-2 } e^\diffp  + {g-2 \choose k-1}\right) \right)^2 \right) }{k}
\]

Hence, we will compare the resulting $\chi^2$ statistic with the $\chi^2$ distribution with $g-1$ degrees of freedom.  As noted earlier, if any test computes a group probability $\hat{\pi}_j$ so that $n \cdot \hat{\pi} \leq 5$, we simply reject the null hypothesis.  

\subsection{Results}

We present results of our various private tests in Figure~\ref{fig:PrivANOVA} with $\mu = \mu_j$ for all $j \in [g]$ with $g = 10$ but we vary the mean in the last coordinate $\mu_{10}$ to see the fraction of times we reject the null.    We compare the tests for the 3 different privacy mechanisms, $g$-randomized response, bit flipping, and the subset mechanism, as well as the approach of using the one-way ANOVA test as is on data that has been privatized with $g$-randomized response.   We see that the subset mechanism outperforms the various tests at the different privacy levels.  Recall that the subset mechanism has a free parameter $k$ which we fix to be the nearest integer to $g/(e^\diffp + 1)$ and when $\diffp$ is large, the subset mechanism becomes the $g$-randomized response mechanism.  Throughout our experiments, we assume that the variances across all groups are the same in our estimates in the covariance matrix of $Y_i^\diffp$, as is the setting for the one-way ANOVA.  

\begin{figure}
\begin{center}
  \includegraphics[width=0.31\linewidth]{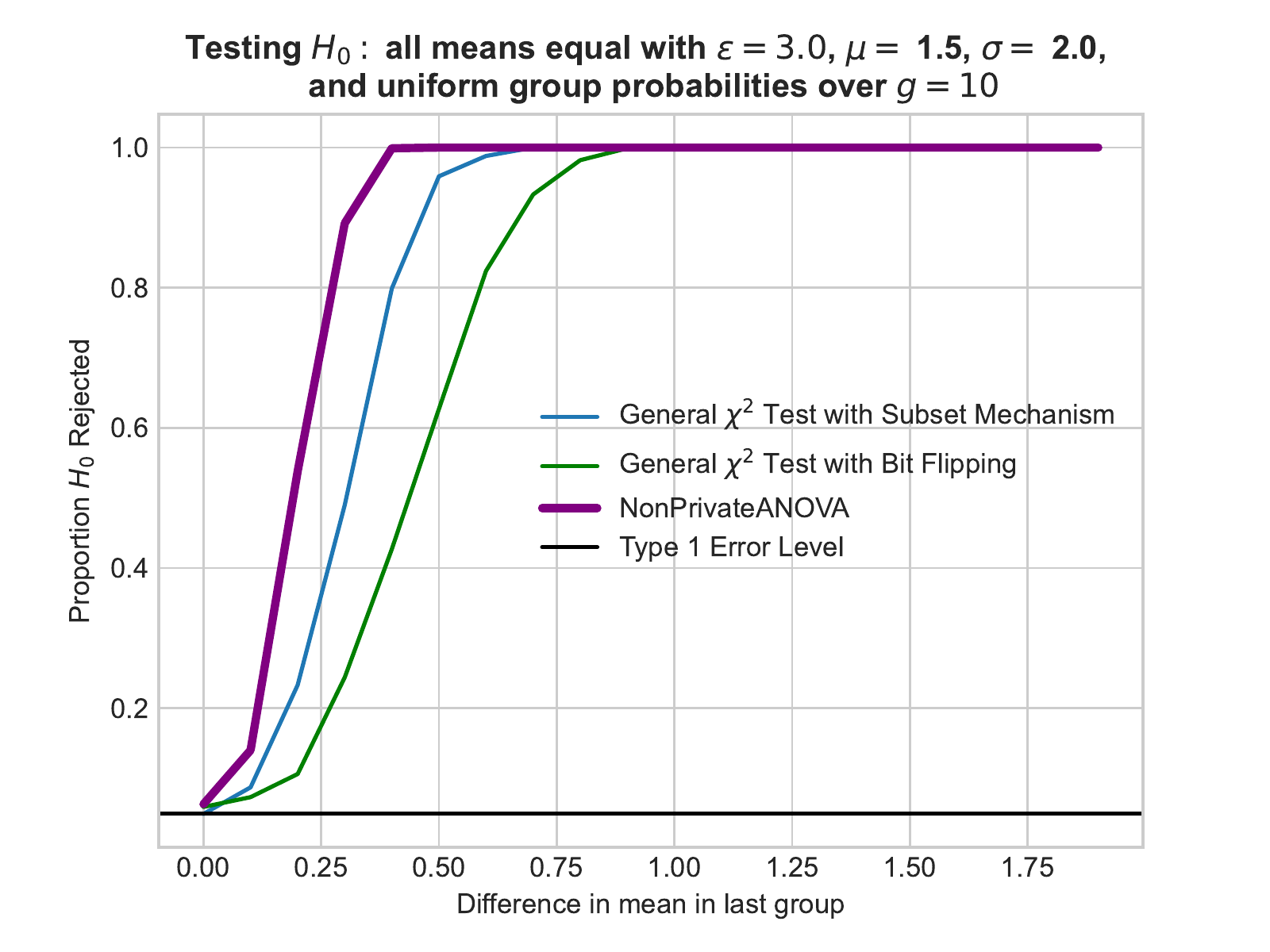}
    \includegraphics[width=0.31\linewidth]{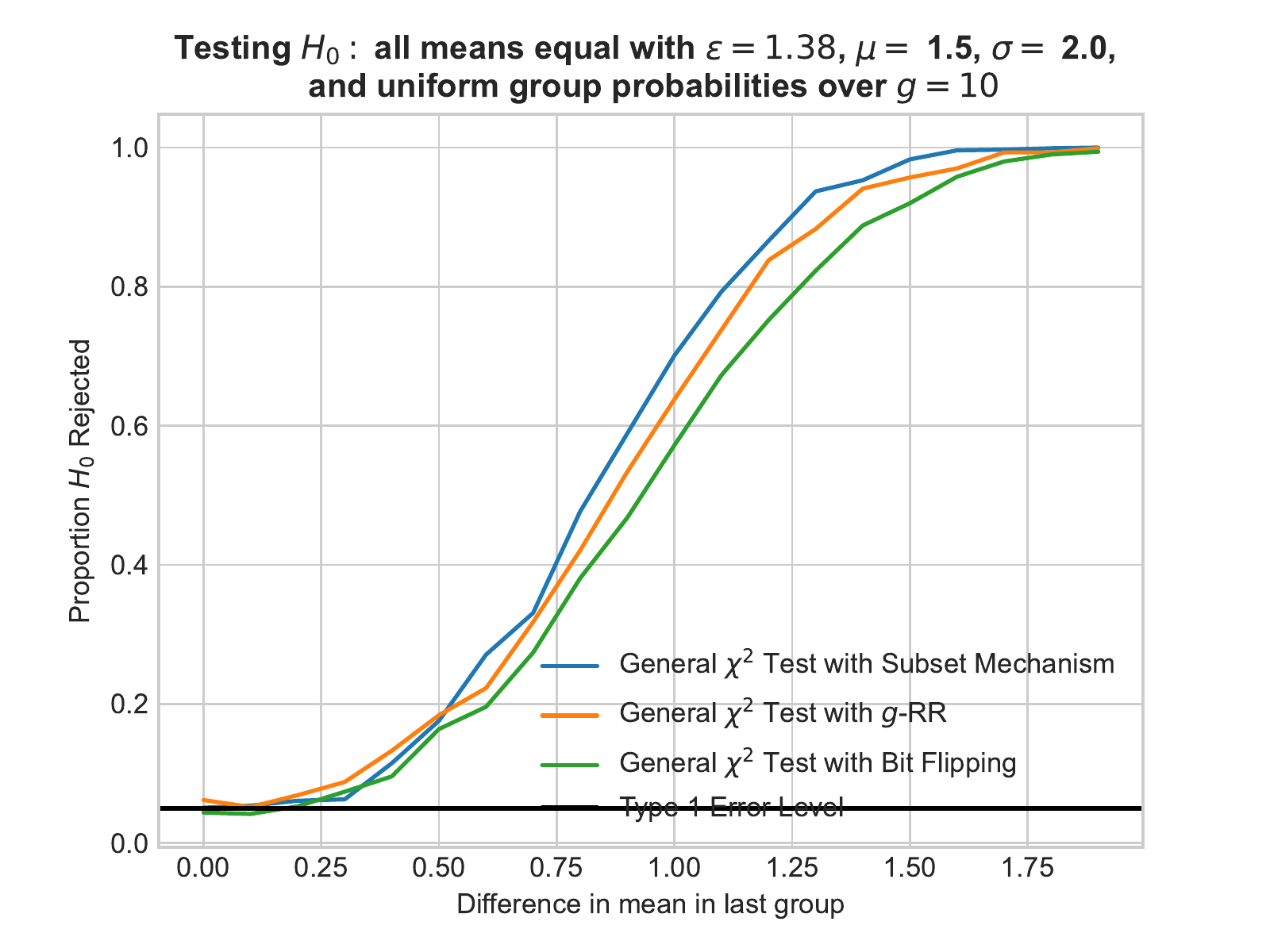}
      \includegraphics[width=0.31\linewidth]{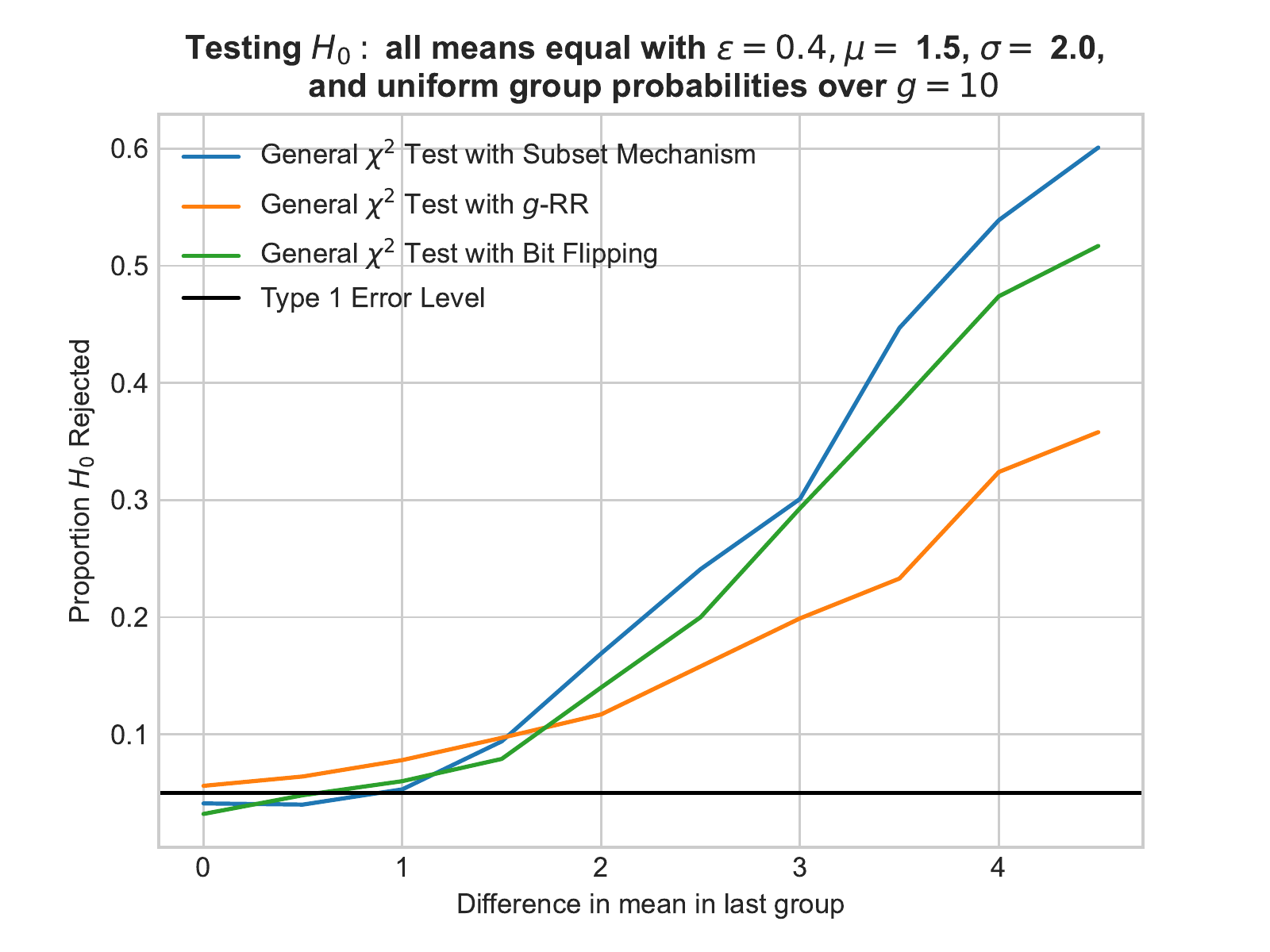}
  \caption{Comparing power curves for one-way ANOVA and the general $\chi^2$ approach with $n = 10000$ samples and various levels of privacy.}
  \label{fig:PrivANOVA}
\end{center}
\end{figure}

We also show that the tests we develop achieve higher empirical power rather than simply using the classical one-way ANOVA tests after privatizing the groups.  We present plots in Figure~\ref{fig:compareClassicMultipleMeans} that shows for data generated with the subset mechanism at various privacy levels, the general $\chi^2$ test that accounts for the subset mechanism outperforms using the classic ANOVA test, which does not account for the privacy mechanism.  We point out that when $\diffp$ gets larger, the two tests seem to perform similarly, similar to what we saw in Section~\ref{sect:MultiplePropDiffResults} with testing multiple proportions.  All plots consist of the proportion of times the null hypothesis was rejected over 1000 trials.  

\begin{figure}
  \includegraphics[width=0.46\linewidth]{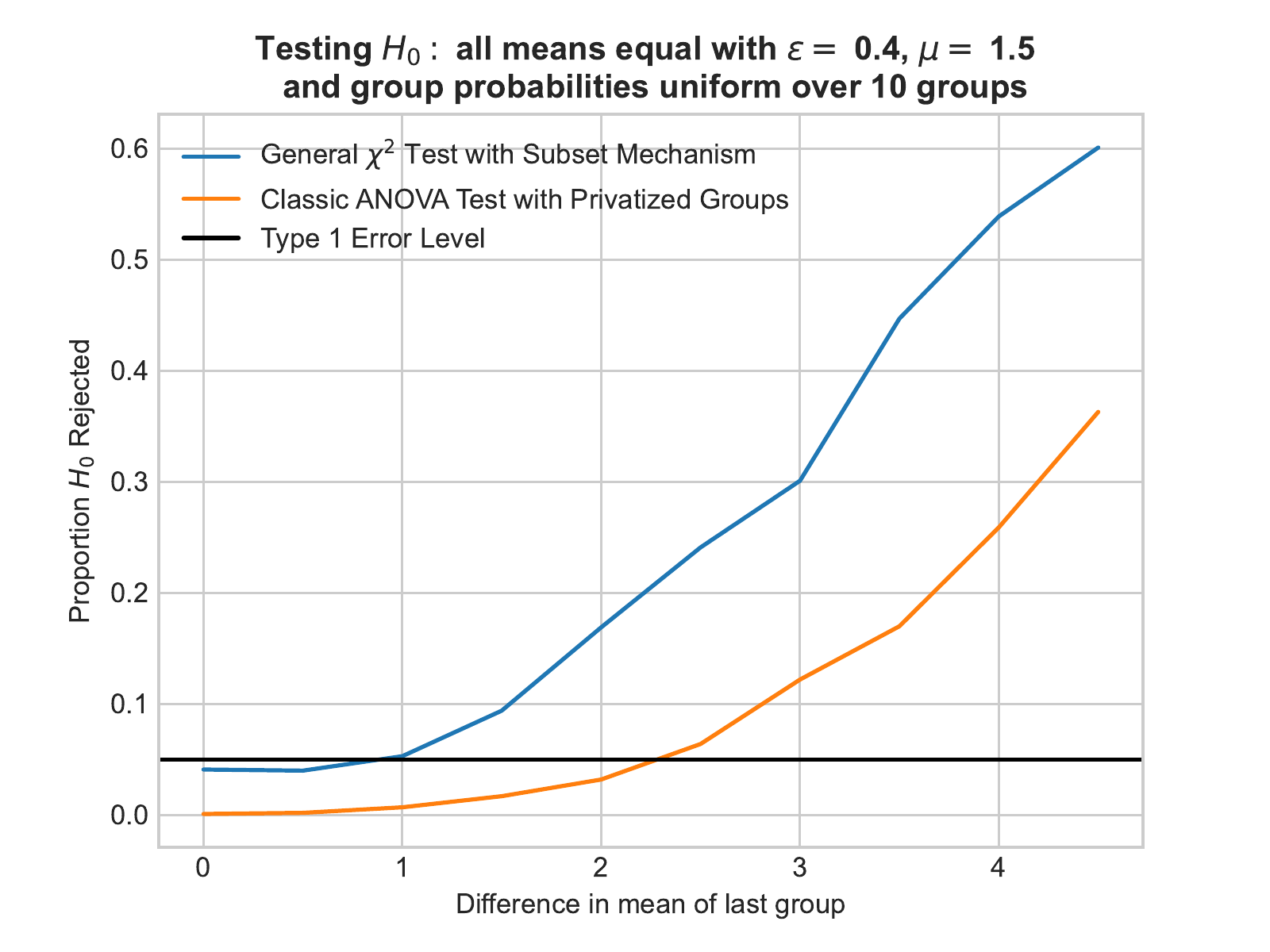}
  \includegraphics[width=0.46\linewidth]{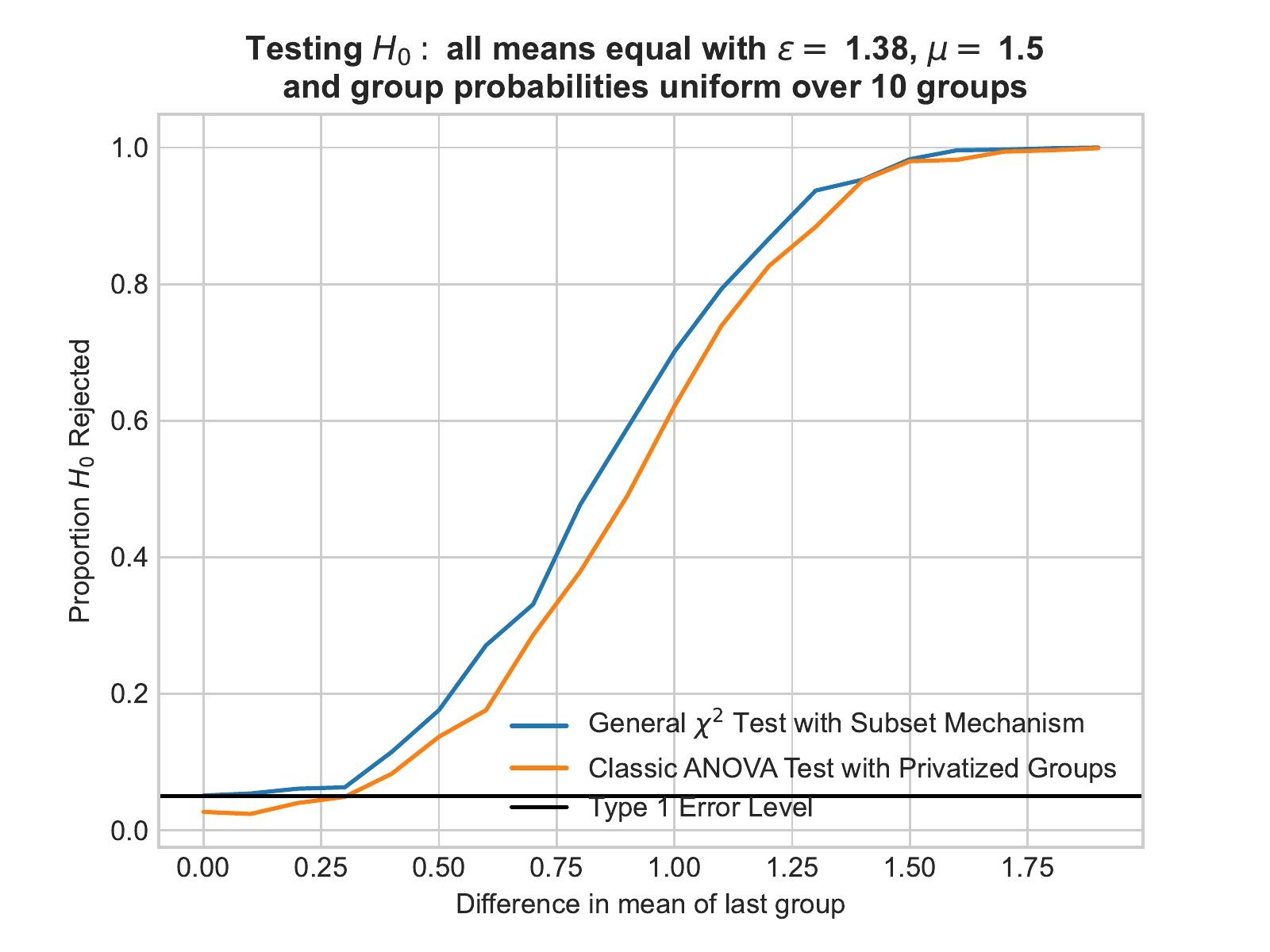}
  \caption{Comparing various Local Group DP mechanisms with corresponding one-way ANOVA test for testing whether there is a difference in means across different sensitive groups with various $\diffp$ and $n = 10000$.}
  \label{fig:compareClassicMultipleMeans}
\end{figure}

\subsection{Testing Difference in Two Groups}
Once we have determined that there is indeed a difference across all group means, one typically wants to compute confidence intervals for the mean between two specific groups.  In fact, one may want to directly compute a confidence interval between two groups, although the data has been privatized over several groups.  In our privacy setting, we do not want to privatize the group membership each time we want to conduct a test, hence we will have samples with privatized groups which will mix with the two specific groups we want to compute a confidence interval for their difference.  Ideally, we would privatize only the two groups we are interested in, but this would increase the privacy loss, something we want to avoid.  Hence, we show how we can still obtain valid confidence intervals between two specific groups although the data has been privatized over $g> 2$ groups.   

Since we are only interested in the difference in means between two groups, say $H_0: \mu_j = \mu_\ell + \Delta$ for $j, \ell \in [g]$, we can change the optimization of the general $\chi^2$ statistic to allow for any mean $\mu_m$ where $m \neq j,\ell$ based on the data samples.  This will reduce our degrees of freedom of the asymptotic $\chi^2$ by $g-1$, thus if we privatize the groups with the Subset Mechanism, then the $\chi^2$ statistic, after optimizing over all $\mu_m$ for $m \neq j, \ell$ and $\mu_j = \mu_\ell + \Delta$, should be compared to a $\chi^2$ with 1 degree of freedom.   

We show in Figure~\ref{fig:SignificanceSubset} that testing whether two means are equal can lead to invalid results if we were to use the classical t-test after the groups have been privatized.  This is in contrast to when we would privatize only two groups, where the classical t-test empirically achieve the target level of Type 1 error, see Figure~\ref{fig:tTestPowerComparisons}.  The general $\chi^2$ approach can then be used to achieve the target level of Type 1 error.  We also give confidence intervals for the difference in mean in Figure~\ref{fig:tTestCIs} where the classical t-test misses the true difference while the general $\chi^2$ test overlaps the true difference.  The difference between the two tests becomes more pronounced when the group probabilities between the two groups of interest differ from each other.

\begin{figure}
  \includegraphics[width=0.31\linewidth]{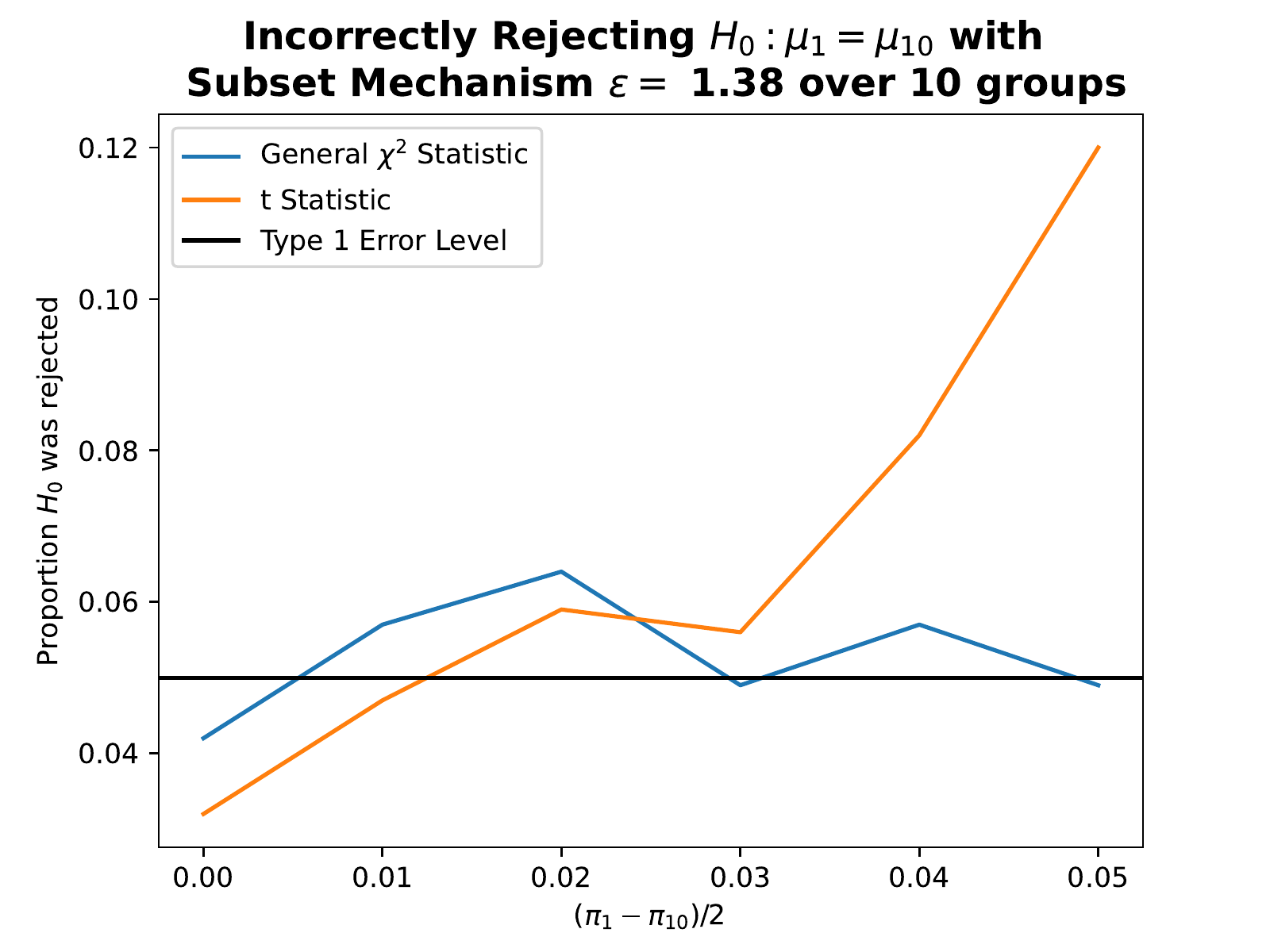}
  \includegraphics[width=0.31\linewidth]{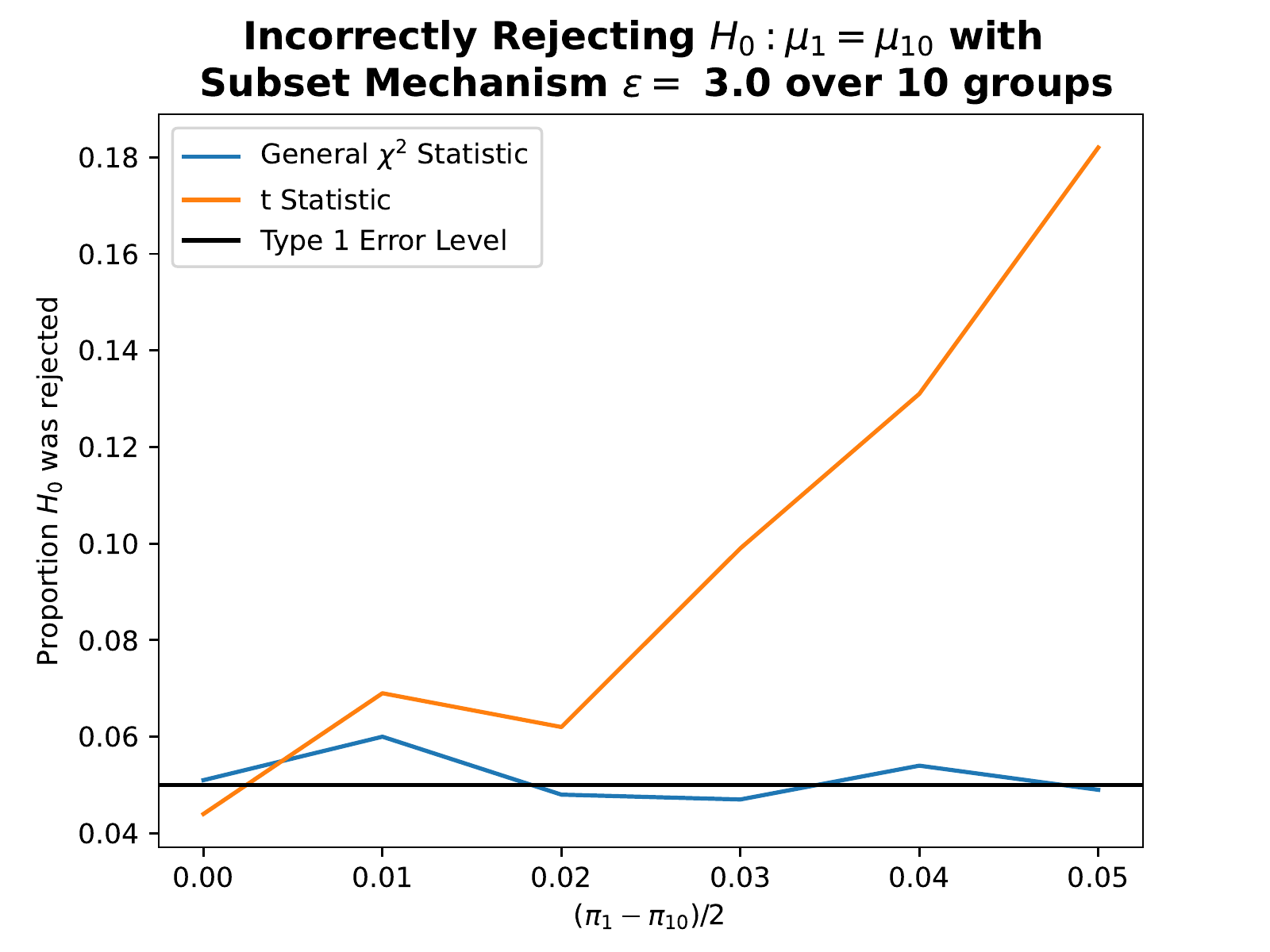}
    \includegraphics[width=0.31\linewidth]{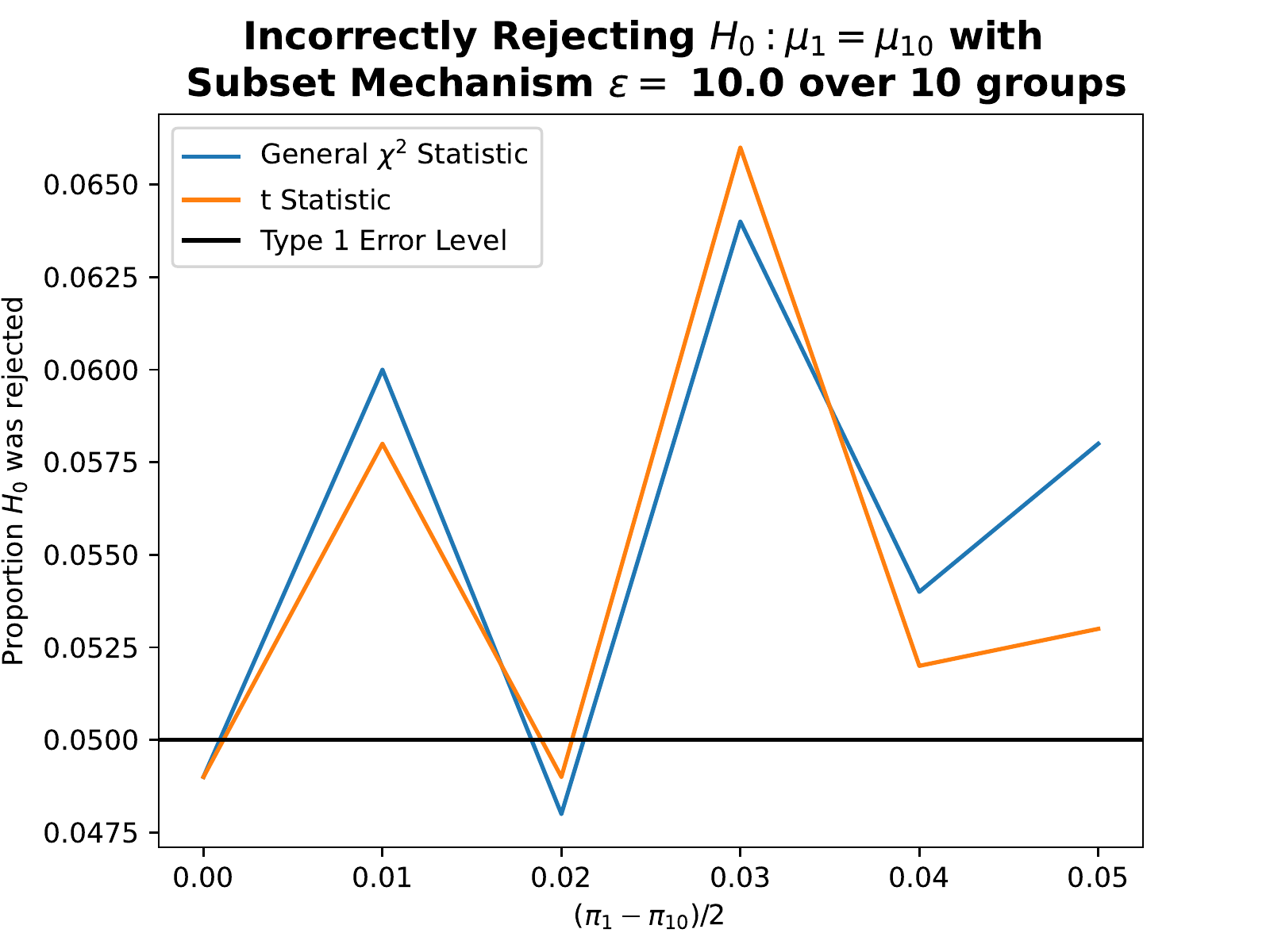}
  \caption{We give the proportion of times in 1000 trials that the classical t-test incorrectly rejects $H_0: \mu_1 = \mu_{10}$ after we privatize the membership of $g= 10$ groups for each sample.  We use the same standard deviation $\sigma = 2$ across all groups and have $\mu_j = 1.0$ for all $j \neq 1,10$, $\mu_{10} = 1.5$ and $n = 10000$. We change the group probability $\pi$ from uniform across all 10 groups and then change the first and last group probabilities.}
  \label{fig:SignificanceSubset}
\end{figure}

\begin{figure}
  \includegraphics[width=0.31\linewidth]{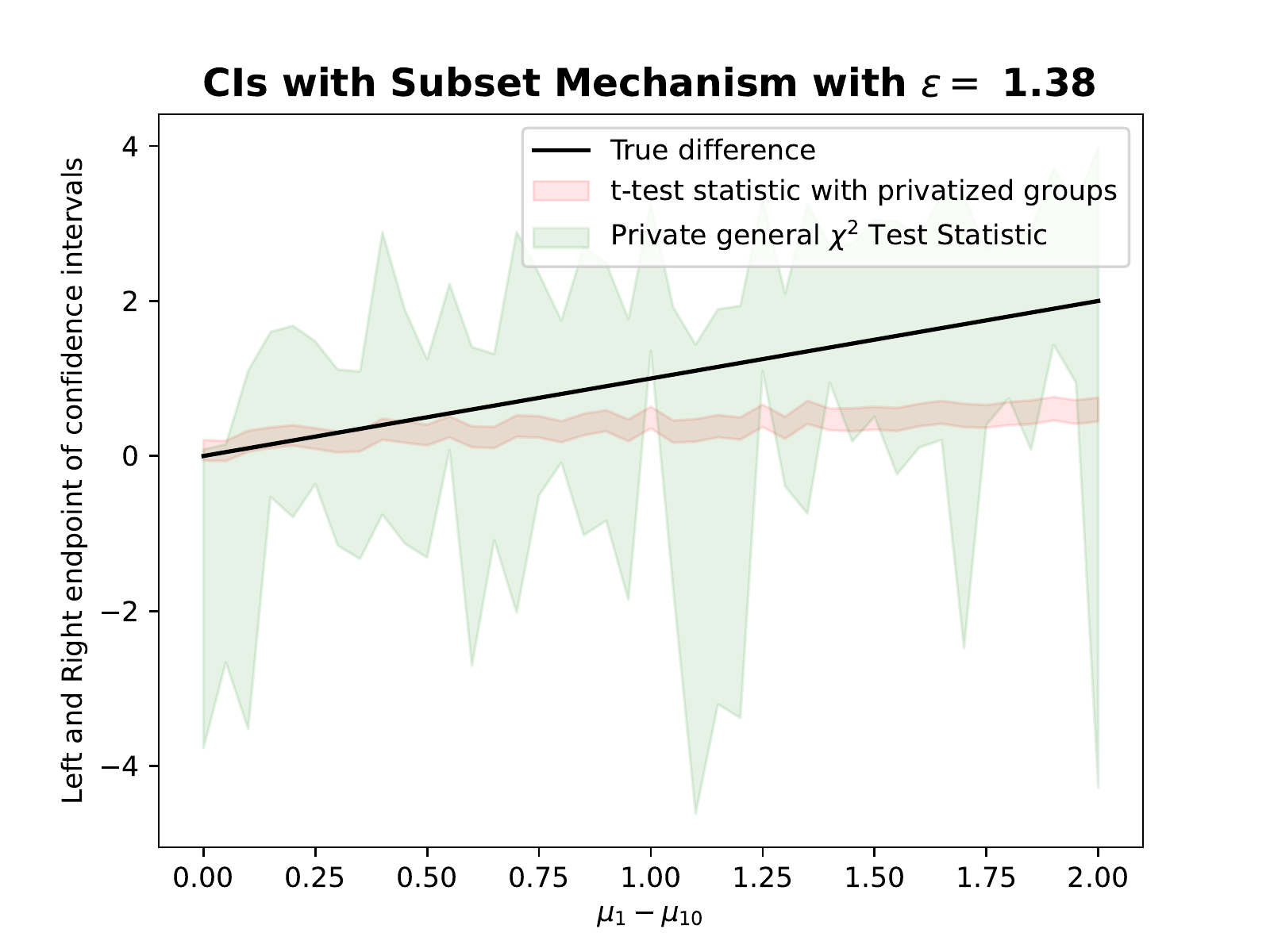}
  \includegraphics[width=0.31\linewidth]{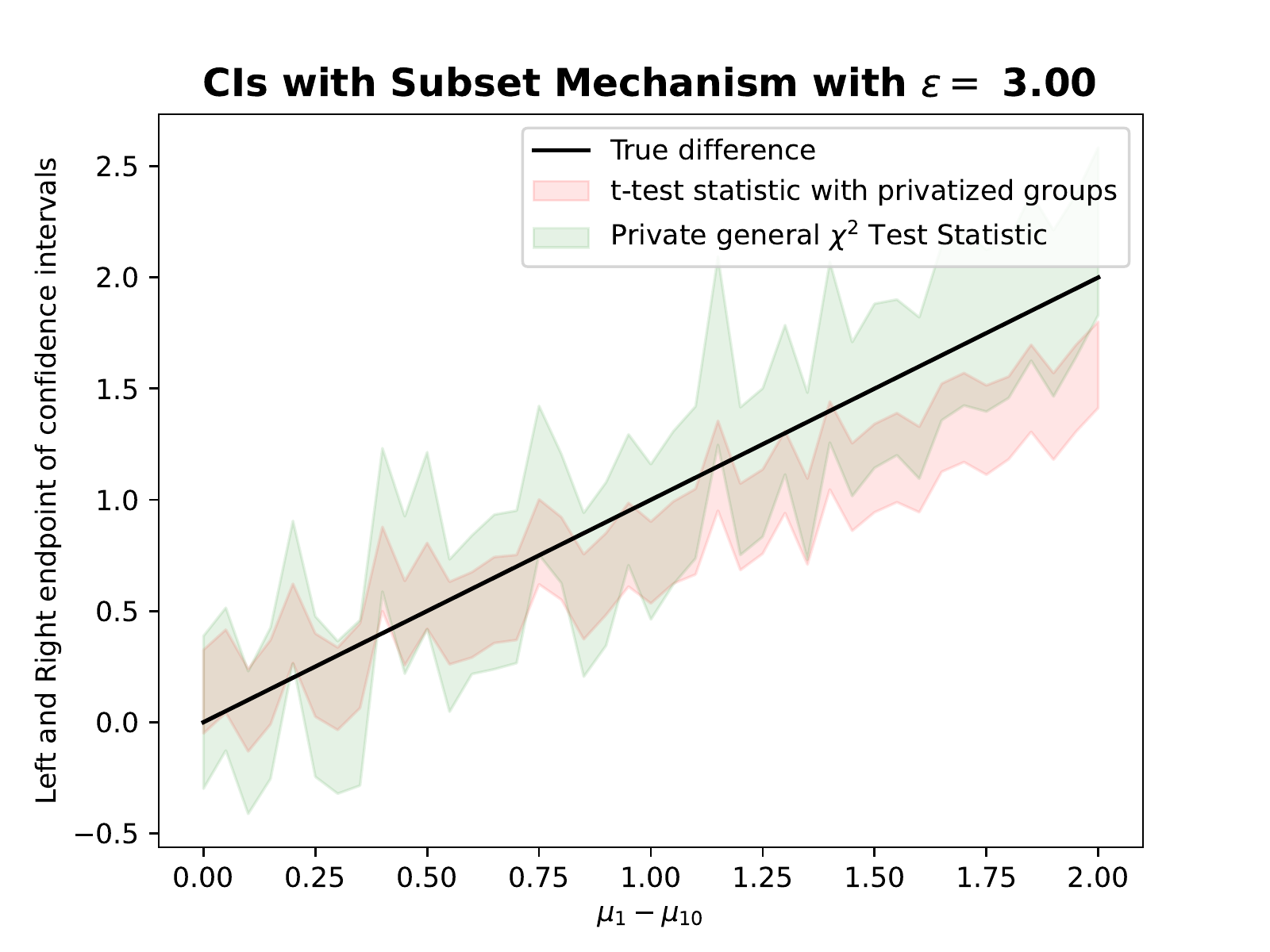}
    \includegraphics[width=0.31\linewidth]{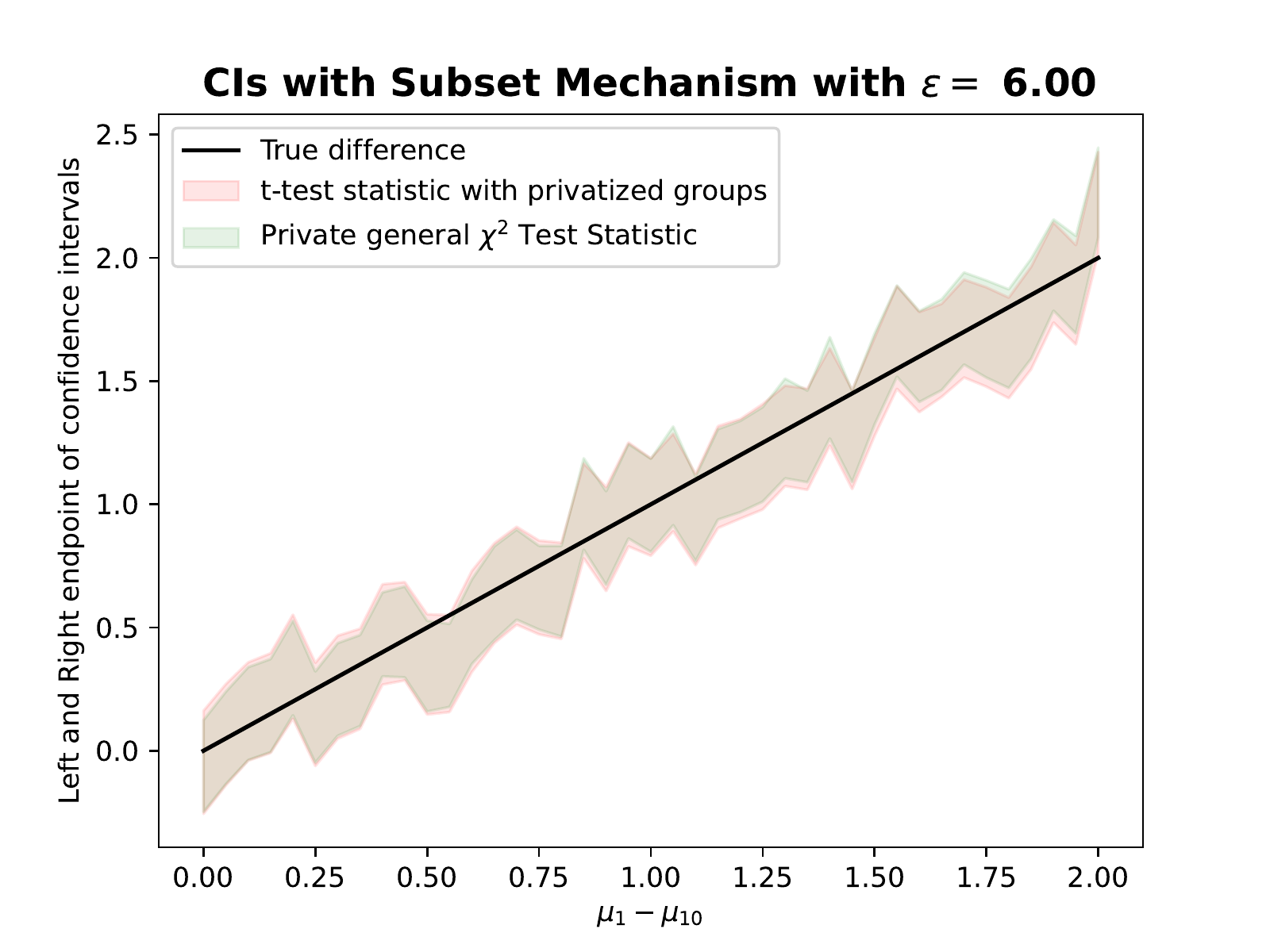}
  \caption{We give the confidence intervals between the first and 10th group mean when computed with the classical t-test and the general $\chi^2$ test after we privatize the membership of $g= 10$ groups for each sample.  We use the same standard deviation $\sigma = 2$ across all groups and have $\mu_j = 1.0$ for all $j \neq 1,10$, $\mu_{10} = 1.5$ and $n = 10000$. The group probability $\pi$ is uniform across all 10 groups except $\pi_1 = 0.15$ and $\pi_{10} = 0.05$.}
  \label{fig:tTestCIs}
\end{figure}

\section{Application to A/B Testing}

To close, we show how our general approach can be applied to the A/B testing setting, specifically to test whether the difference in means between two groups has remained the same or changed in an A/B test. Going back to our motivation, this extension has huge potential as a practical solution for differential privacy in analyzing A/B tests over sensitive groups. With this method, companies can add privacy to data describing demographic information, and then proceed to test hypotheses about different groups and outcomes under many different A/B tests, without incurring privacy loss. As companies move to ask questions about different groups' experiences on their platforms, testing without statistical bias and with high power is crucial for making the best decisions. 

For this application, we assume that samples are randomly assigned to either a treatment or a control variant in an A/B test. We will denote the random variable $T_i \sim \text{Bern}(\lambda)$ to determine whether sample $i$ is in the treatment $T_i =1$ or in the control $T_i = 0$ group.  Note that the parameter $\lambda \in [0,1]$ is known and does not need estimating.  In the treatment set of samples, data in group $j \in \{1, 2\}$ will follow $X_{i}[j,t] \sim \Normal{\mu_{j,t}}{\sigma_{j,t}^2}$, while in the control set of samples, data in group $j \in \{1,2\}$ will follow $X_{i}[j,c] \sim \Normal{\mu_{j,c}}{\sigma_{j,c}^2}$.  Again, we will let $W_i \sim \text{Bern}(\pi)$ determine the group that sample $i$ belongs to, i.e. $W_i = 0$ for group 1 and $W_i = 1$ for group 2.  Our goal here is to test whether the differences in means in the two groups has changed between the treatment and control.  That is, we test $H_0: \mu_{1,t} - \mu_{2,t} = \mu_{1,c} - \mu_{2,c}$.  In order to compute confidence intervals, we will include a $\Delta$ term in the difference, so that we test $H_0: \mu_{1,t} - \mu_{2,t} = \mu_{1,c} - \mu_{2,c}$, but $\Delta = 0$ is the typical hypothesis test.

We are not considering the membership of a sample to the control or treatment to be sensitive, and hence not privatizing it.  Instead, we privatize the group membership $j \in \{1,2\}$ for each sample using randomized response.  Recall that for randomized response, we will use random variables $Z_i^\diffp[j,j] \sim \text{Bern}(\tfrac{e^\diffp}{e^\diffp +1})$ for $j \in \{ 1, 2\}$ with $Z^\diffp_{i}[1,2] = 1- Z^\diffp_i[1,1]$ and $Z^\diffp_{i}[2,1] =1 - Z^\diffp_{i}[2,2]$.  We will consider the random vector $Y^\diffp = \sum_{i=1}^n Y_i^\diffp$ in our general $\chi^2$ test framework, where  
\[
Y_i^\diffp = 
\begin{pmatrix}
T_i \cdot (Z_i^\diffp[1,1] \cdot W_i + Z_i^\diffp[1,2] \cdot(1 - W_i) ) + (1-T_i) \cdot (Z_i^\diffp[1,1] \cdot W_i+ Z_{i}^\diffp[1,2] \cdot (1 - W_i)) \\
T_i  \cdot (Z_i^\diffp[1,1] \cdot W_i \cdot X_{i}[1,t] + Z_i^\diffp[1,2] \cdot (1 - W_i) \cdot X_{i}[2,t]) \\
T_i \cdot (Z_i^\diffp[2,1] \cdot W_i \cdot X_{i}[1,t] + Z_i^\diffp[2,2]\cdot (1 - W_i) \cdot X_{i}[2,t]) \\
(1-T_i) \cdot (Z_i^\diffp[1,1] \cdot W_i \cdot X_{i}[1,c] + Z_i^\diffp[1,2] \cdot(1 - W_i) X_{i}[2,c]) \\
(1-T_i) \cdot (Z_i^\diffp[2,1] \cdot W_i \cdot X_{i}[1,c] + Z_i^\diffp[2,2]\cdot(1 - W_i) X_{i}[2,c]) 
\end{pmatrix}
\]

Since $T_i$ is independent of the other variables, a lot of the calculations we have already done for t-tests in Section~\ref{sect:privateTTest} can be used.  Next we compute its expectation in terms of the population parameters, where we will write $\vec{\mu} = (\mu_{1,t}, \mu_{2,t}, \mu_{1,c}, \mu_{2,c})$,
\[
\vec{\theta}^\diffp(\pi,\vec{\mu}; \diffp, \lambda) = \E[Y_i^\diffp] = 
\begin{pmatrix}  
\tfrac{e^\diffp}{e^\diffp + 1}  \pi  +  \tfrac{1}{e^\diffp + 1} \left(1-\pi\right) \\
\lambda \left( \tfrac{e^\diffp}{e^\diffp + 1}  \pi \mu_{1,t} +  \tfrac{1}{e^\diffp + 1} \left(1-\pi\right) \mu_{2,t} \right)\\
\lambda\left( \tfrac{1}{e^\diffp + 1}  \pi \mu_{1,t} +  \tfrac{e^\diffp}{e^\diffp + 1} \left(1-\pi\right) \mu_{2,t} \right) \\
(1-\lambda) \left( \tfrac{e^\diffp}{e^\diffp + 1}  \pi \mu_{1,c} +  \tfrac{1}{e^\diffp + 1} \left(1-\pi\right) \mu_{2,c} \right)\\
(1-\lambda) \left( \tfrac{1}{e^\diffp + 1}  \pi \mu_{1,c} +  \tfrac{e^\diffp}{e^\diffp + 1} \left(1-\pi\right) \mu_{2,c} \right)
\end{pmatrix}
\]

We next compute the covariance matrix $C(\pi, \vec{\mu}, \vec{\sigma}^2; \diffp, \lambda) =  \E\left[ Y_i^\diffp (Y_i^\diffp)^\intercal \right] - \E\left[ Y_i^\diffp \right] \E\left[ Y_i^\diffp \right]^\intercal $, where $\vec{\sigma}^2 = (\sigma_{1,t}^2, \sigma_{2,t}^2, \sigma_{1,c}^2, \sigma_{2,c}^2)$. We have
\begin{align*}
\E\left[ Y_i^\diffp (Y_i^\diffp)^\intercal \right] [1,1] &= \E\left[ Y_i^\diffp\right][1] \\
\E\left[ Y_i^\diffp (Y_i^\diffp)^\intercal \right] [1,2] & = \lambda \left( \pi \tfrac{e^\diffp}{e^\diffp + 1} \mu_{1,t} + (1-\pi) \tfrac{1}{e^\diffp + 1} \mu_{2,t} \right) \\
\E\left[ Y_i^\diffp (Y_i^\diffp)^\intercal \right] [1,3] & = \E\left[ Y_i^\diffp (Y_i^\diffp)^\intercal \right] [1,5] = 0 \\
\E\left[ Y_i^\diffp (Y_i^\diffp)^\intercal \right] [1,4] & = (1-\lambda) \left( \pi \tfrac{1}{e^\diffp + 1} \mu_{1,c} + (1-\pi) \tfrac{e^\diffp}{e^\diffp + 1} \mu_{2,c} \right) \\
\E\left[ Y_i^\diffp (Y_i^\diffp)^\intercal \right] [2,2] & = \lambda \left( \pi \tfrac{e^\diffp}{e^\diffp + 1} (\mu_{1,t}^2 + \sigma_{1,t}^2) + (1-\pi) \tfrac{1}{e^\diffp + 1} (\mu_{2,t}^2 + \sigma_{2,t}^2) \right) \\
\E\left[ Y_i^\diffp (Y_i^\diffp)^\intercal \right] [3,3] & = \lambda \left( \pi \tfrac{1}{e^\diffp + 1} (\mu_{1,t}^2 + \sigma_{1,t}^2) + (1-\pi) \tfrac{e^\diffp}{e^\diffp + 1} (\mu_{2,t}^2 + \sigma_{2,t}^2) \right) \\
\E\left[ Y_i^\diffp (Y_i^\diffp)^\intercal \right] [4,4] & = (1-\lambda) \left( \pi \tfrac{e^\diffp}{e^\diffp + 1} (\mu_{1,c}^2 + \sigma_{1,c}^2) + (1-\pi) \tfrac{1}{e^\diffp + 1} (\mu_{2,c}^2 + \sigma_{2,c}^2) \right) \\
\E\left[ Y_i^\diffp (Y_i^\diffp)^\intercal \right] [5,5] & = (1-\lambda) \left( \pi \tfrac{1}{e^\diffp + 1} (\mu_{1,c}^2 + \sigma_{1,c}^2) + (1-\pi) \tfrac{e^\diffp}{e^\diffp + 1} (\mu_{2,c}^2 + \sigma_{2,c}^2) \right) \\
\E\left[ Y_i^\diffp (Y_i^\diffp)^\intercal \right] [j,\ell] & = 0, \quad j, \ell\in \{2,3,4,5 \}, j \neq \ell.
\end{align*}
Under the null hypothesis $H_0: \mu_{1,t} - \mu_{2,t} = \mu_{1,c} - \mu_{2,c} + \Delta$ we can solve for one of the means, so we will set $\mu_{1,t} = \mu_{1,c} - \mu_{2,c} + \mu_{2,t} + \Delta$ to reduce the number of parameters (note that we treat $\Delta$ as known).  We now want to use our sample $Y^\diffp$ to estimate the other means and the group probability $\hat{\pi}$.  We start with the group probability, which we have estimated the same way in other sections:
\[
\hat{\pi} = (e^\diffp + 1) \left( \frac{Y^\diffp[0]/n - \tfrac{1}{e^\diffp + 1}}{e^\diffp - 1} \right).
\]

We then solve for estimates of the means $\hat{\mu}_{1,t}, \hat{\mu}_{2,t}, \hat{\mu}_{1,c}, \hat{\mu}_{2,c}$ by setting the empirical averages $(\sum_{i=1}^nY_i[j] /n : j \in \{2,3, 4, 5\})$ equal to the respective coordinates of $\vec{\theta}^\diffp(\hat{\pi}, \vec{\mu}; \diffp, \lambda)$ and solve for the means.  Note that when we substitute in the null hypothesis $\mu_{1,t} = \mu_{2,t} + \mu_{1,c} - \mu_{2, c} + \Delta$, we get 4 equations and 3 unknowns.  In this case, we choose the first three equations to solve for $\mu_{2,t}, \mu_{1,c}, \mu_{2, c}$ and then set $\hat{\mu}_{1,t} = \hat{\mu}_{2,t} + \hat{\mu}_{1,c} - \hat{\mu}_{2, c} + \Delta$.
From these estimates, we can plug them into the covariance matrix.  Note that we will not directly form estimates for $\vec{\sigma}^2$, instead we will compute the sample variance for $\{Y_i^\diffp[j]\}_{i=1}^n$ for $j \in \{2, 3, 4, 5 \}$ to use on the main diagonal of the covariance matrix and check to see that they indeed give valid sample variances (i.e. have positive variance of the true data), as we did for the test in Section~\ref{sect:privateTTest}.  

Our test statistic then becomes the following, where we will write $\hat{C}$ to denote the covariance matrix with the above parameter estimates and given $\diffp, \lambda, \Delta$,
\[
D^\diffp(\lambda, \Delta) = 
 \min_{\substack{\pi \in (0,1), \\ \mu_{2,t}, \mu_{1,c}, \mu_{2,c} \in \R \\ \mu_{1,t} = \mu_{1,c} - \mu_{2,c} + \mu_{2,t} + \Delta}} \left\{ 
\left(  Y^\diffp  - \vec{\theta}^\diffp\left(\pi,  \vec{\mu}; \diffp, \lambda \right) \right)^\intercal 
 \hat{C}^{-1}
\left( 
Y^\diffp  - \vec{\theta}^\diffp\left(\pi,\vec{\mu}; \diffp, \lambda \right)
\right)
 \right\}.
\]
We then compare the test statistic with a $\chi^2$ random variable with $1$ degrees of freedom.  We give power results in Figure~\ref{fig:ABResultsPower} for hypothesis tests comparing the general $\chi^2$ test with the classical t-test on privatized groups, which seems to perform similarly.  We also give confidence intervals for the difference between means across treatment and control in Figure~\ref{fig:ABResultsCI} using the general $\chi^2$ test statistic and the unmodified t-test statistic.  In our experimental setup we generate data with zero means across groups 1 and 2 in both treatment and control and keep the variance across all to be 1.  We then will vary the mean in the control group of group $1$, i.e. $\mu_{1,c}$ to change the difference between groups in the treatment and control.  Observe that the t-test does not cover the true difference in difference in the confidence intervals, as compared to the general $\chi^2$ test, which does not suffer from the same downward bias.

\begin{figure}
  \includegraphics[width=0.48\linewidth]{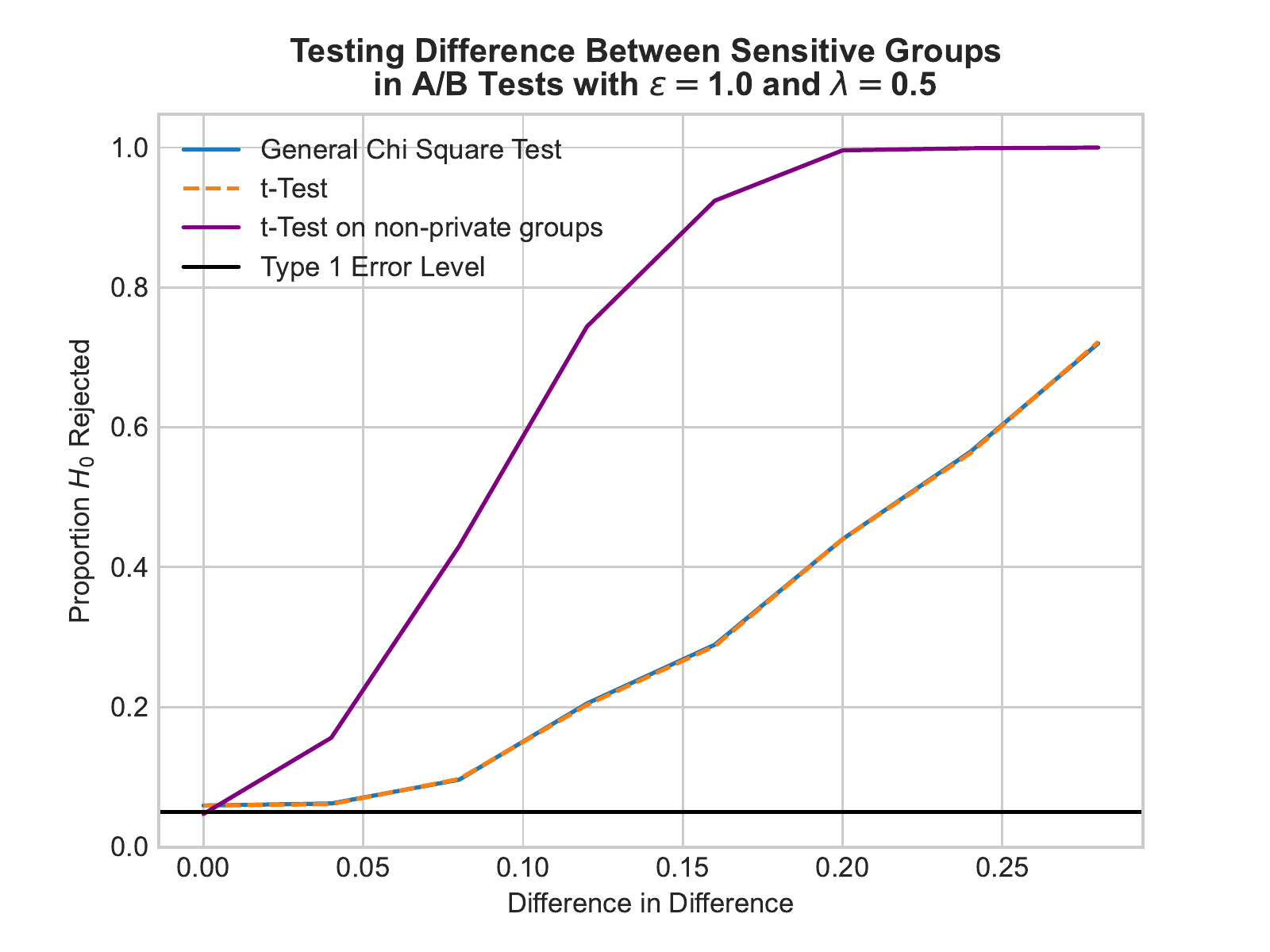}
  \includegraphics[width=0.48\linewidth]{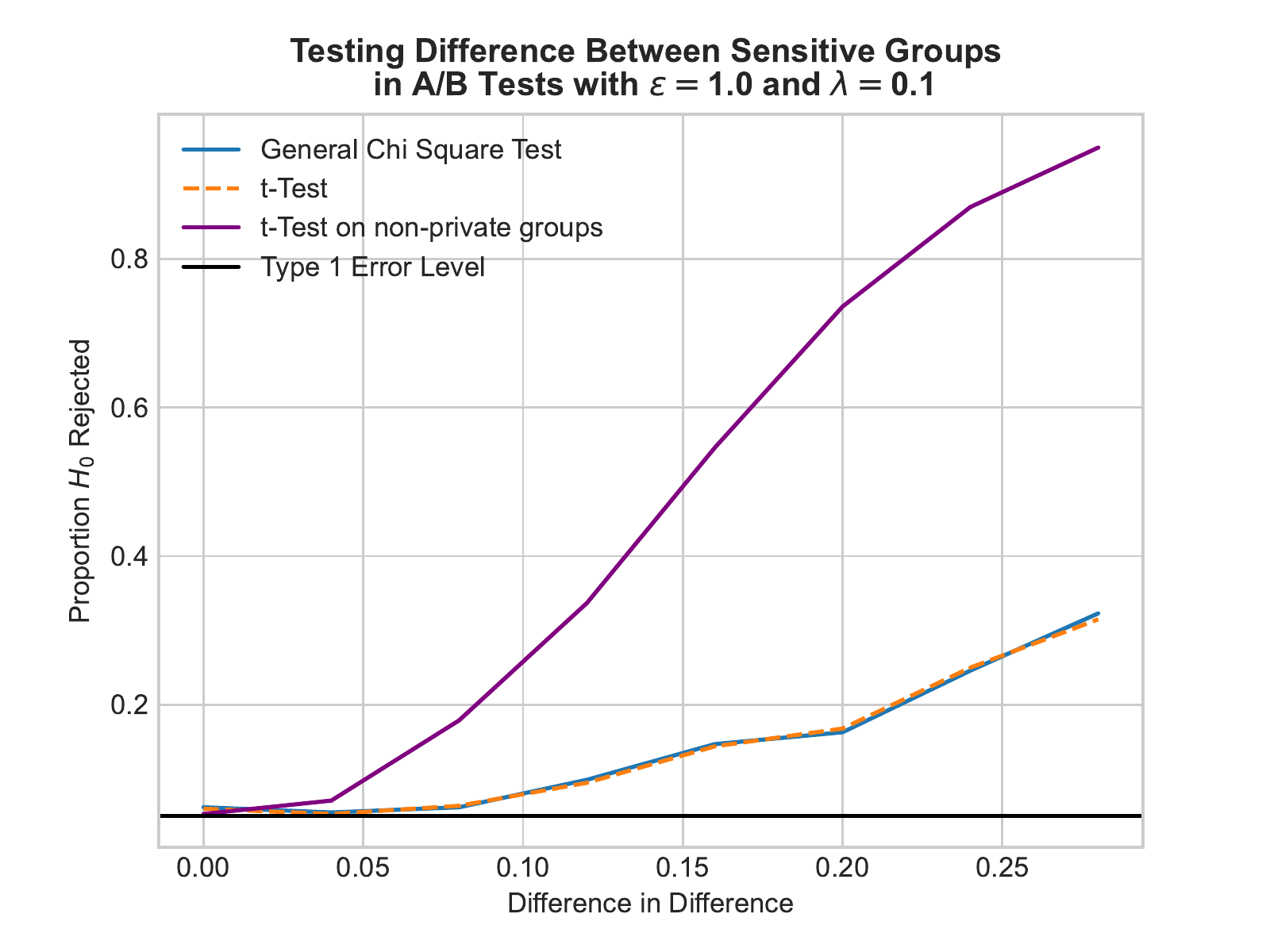}
  \caption{Power results for hypothesis testing for the difference in means across sensitive groups between treatment and control $H_0: \mu_{1,t} - \mu_{2,t} = \mu_{1,c} - \mu_{2,c}$.  We compare the (unmodified) t-test and the general $\chi^2$ test on privatized groups with treatment probability $\lambda \in \{0.5, 0.1 \}$ and $\diffp = 1$.}
  \label{fig:ABResultsPower}
\end{figure}

\begin{figure}
  \includegraphics[width=0.48\linewidth]{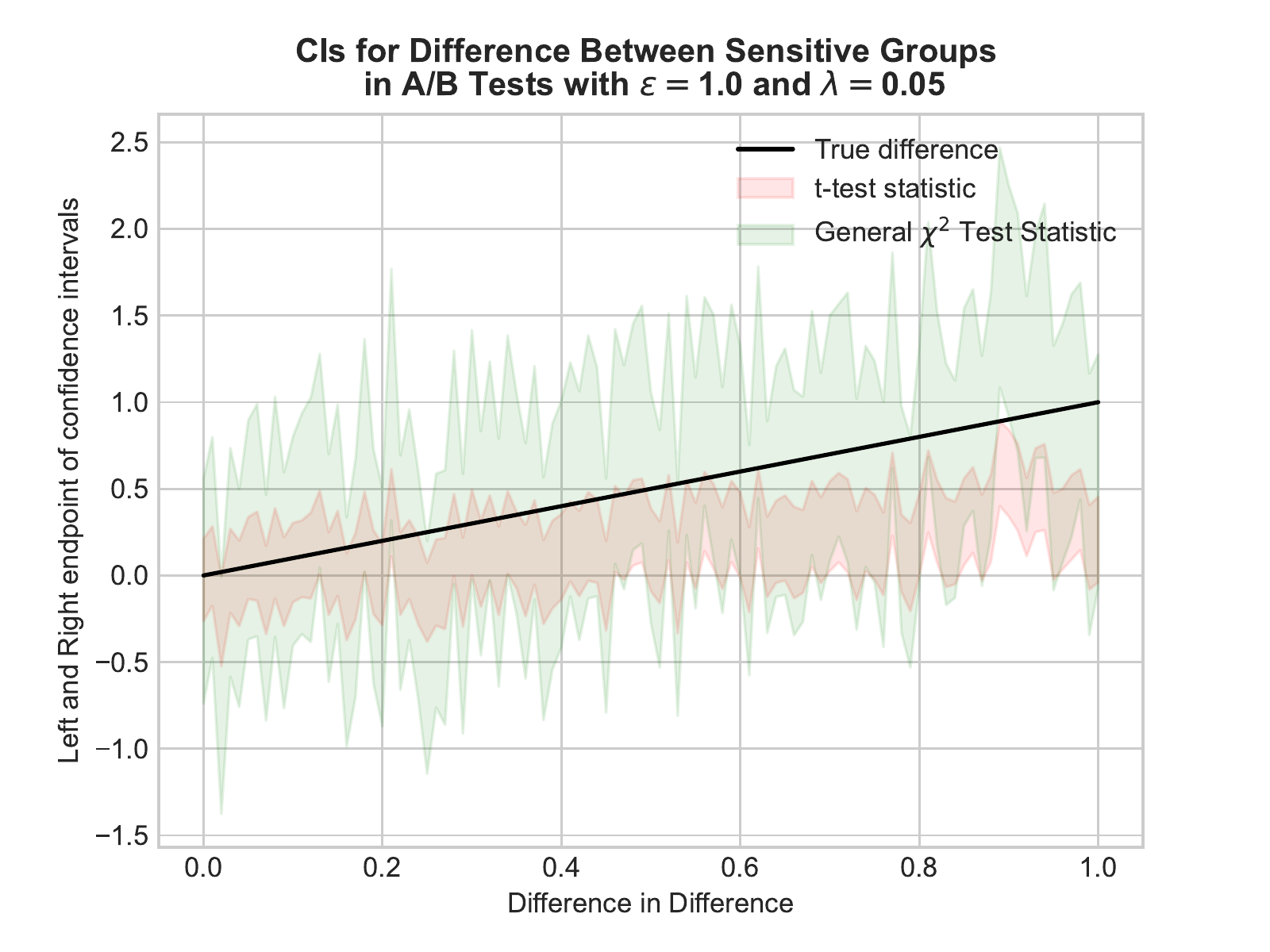}
  \includegraphics[width=0.48\linewidth]{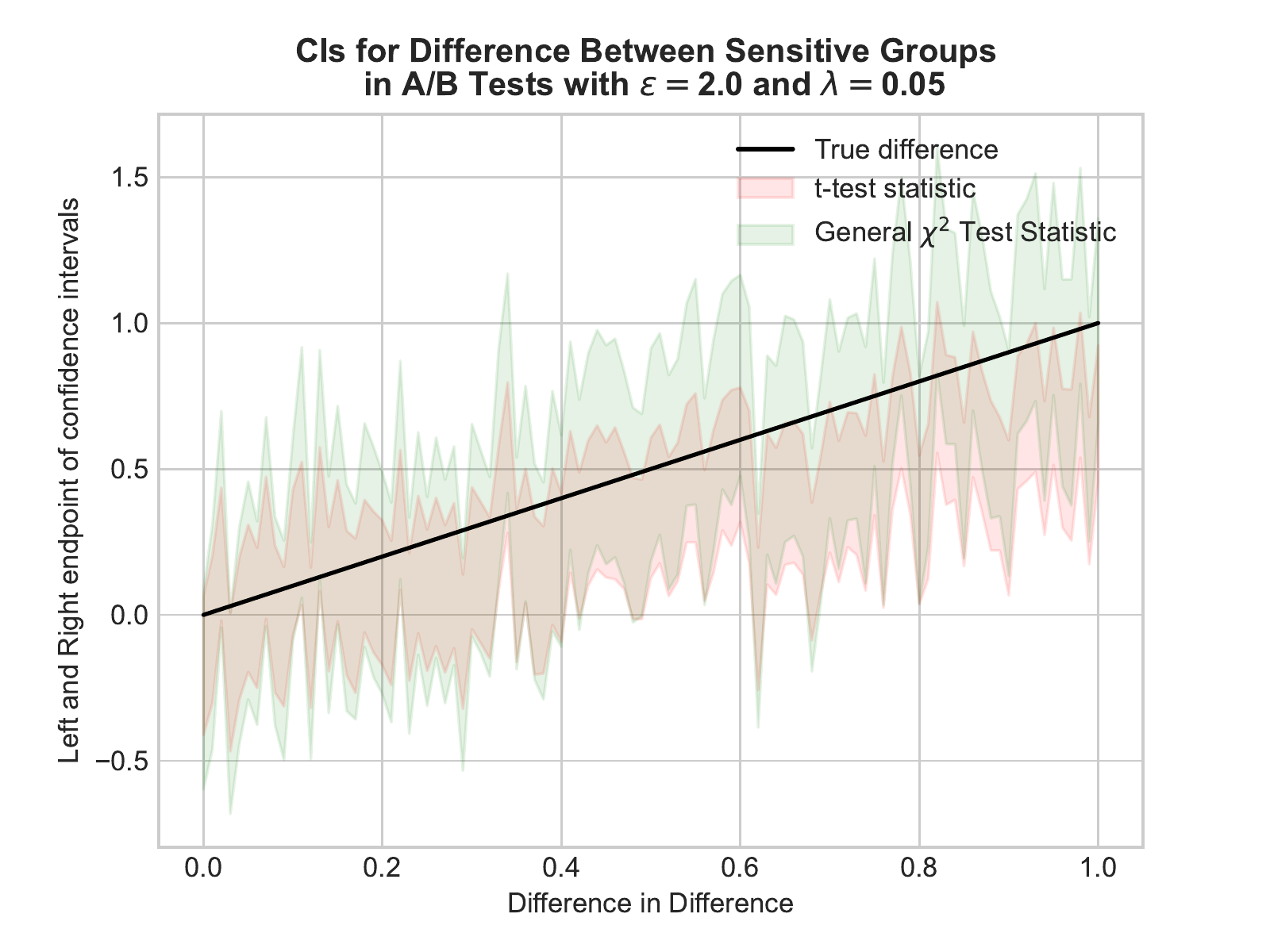}
  \caption{Confidence intervals for the difference in means across sensitive groups between treatment and control $H_0: \mu_{1,t} - \mu_{2,t} = \mu_{1,c} - \mu_{2,c} + \Delta$.  We compare the (unmodified) t-test and the general $\chi^2$ test on privatized groups with treatment probability $\lambda = 0.05$ and $\diffp \in \{ 1, 2\}$.}
  \label{fig:ABResultsCI}
\end{figure}

\section{Conclusion}
We have covered several tests for determining if there are differences in outcomes across sensitive groups.  We introduced the local group DP definition as a less restrictive privacy model than the local DP model, while ensuring the group that each member belongs to remains private no matter the number of tests that are conducted.  We first considered the binary outcome setting, and compared our less restrictive privacy model with local DP $\chi^2$ independence tests for multiple groups and showed a significant improvement in power.  We then extended the general $\chi^2$ framework to test differences in means, where (fully) local DP may cause noise to swamp the signal for even reasonable levels of privacy.  We then provided tests for one-way ANOVA as well as an application to A/B Testing to determine if the difference between groups has changed from the control to the treatment.  Our results show that these general $\chi^2$ tests can also be used to compute valid confidence intervals for the true difference in proportions and means even when group membership is privatized.  This is in contrast to using the traditional $Z$-test or t-test to compute confidence intervals.  Furthermore, the power of the tests over multiple groups can significantly benefit by using the subset mechanism and the corresponding $\chi^2$ test, rather than the traditional tests that do not account for privacy.   

We also mention some possible extensions and limitations.  One extension is to apply the general $\chi^2$ theory to other tests in the global DP model, using the sample moments contingency table from Table~\ref{table:momentContingencyANOVA}.  Such tests were developed in \cite{KiferRo17} but only for categorical data.  With recent significant advances in improving power of DP ANOVA tests using non-parametric rank based tests, we do not expect that adding noise to the contingency tables for global DP and then applying the general $\chi^2$ tests would outperform existing DP tests for ANOVA from \cite{CouchKaShBrGr19}.  However, there are typically \emph{post-hoc} tests conducted after ANOVA has determined there is a significant difference between the groups.  The benefit with privatizing the contingency table is that these post-hoc tests could be conducted without increasing the privacy loss.  Depending on the number of post-hoc tests to be conducted, it may still be beneficial to privatize the full contingency table and run statistical tests directly on it.  We see this as an interesting direction of future work.  

A limitation of this work is that if a new group is to be added to a set of existing groups, increasing the total number of groups from $g$ to $g'$, then it is not clear how to take samples that are privatized over the smaller set of $g$ groups.  The question then becomes, how do we design statistical tests that combines datasets where some samples are privatized over $g$ groups while others are privatized over $g' > g$ groups?  Answering these questions will help researchers to design tests that solve practical problems in hypothesis testing under DP, empowering companies to answer key questions about user experience while protecting user privacy.

\clearpage

\bibliography{bib2}
\bibliographystyle{abbrvnat}

\end{document}